\DeclareMathOperator{\DIV}{div}
\DeclareMathOperator{\TR}{tr}
\DeclareMathOperator{\DIAM}{diam}
\newcommand{\R}{\mathbb{R}}
\newcommand{\dd}{\mathop{}\!\mathrm{d}}
\newcommand{\abs}[1]{\left\vert#1\right\vert}
\newcommand{\set}[1]{\left\{#1\right\}}
\newcommand{\norm}[1]{\left\Vert#1\right\Vert}
\newcommand{\pd}{\partial}
\newcommand{\delbar}{\overline{\nabla}}
\newcommand{\uS}{\mathbb{S}^{n-1}}
\newcommand{\II}{I\!\!I}
\newcommand{\ssOmega}{{\scriptscriptstyle\Omega}}
\newcommand{\rnnn}{\mathbb R^{n}}
\newcommand{\rt}{\mathbb R^{n-1}}
\newcommand{\sn}{ {\mathbb{S}^{n-1}}}
\newcommand{\rn}{\mathbb R}
\newcommand{\N}{\mathbb N}
\newcommand{\psum}{{+_{\negthinspace\kern-2pt p}}\,}
\newcommand{\qsum}[1]{{+_{\negthinspace\kern-2pt #1}}\,}
\newcommand{\dpsum}{{\tilde+_{\negthinspace\kern-1pt p}}\,}
\newcommand{\dqsum}[1]{{\tilde+_{\negthinspace\kern-1pt #1}}\,}
\newcommand{\lsub}[1]{\hskip -1.5pt\lower.5ex\hbox{$_{#1}$}}
\numberwithin{equation}{section}
\newtheorem{theo}{Theorem}[section]
\newtheorem{coro}[theo]{Corollary}
\newtheorem{theorem}[theo]{Theorem}
\newtheorem{lemma}[theo]{Lemma}
\newtheorem{remark}[theo]{Remark}
\theoremstyle{definition}
\begin{document}

\title{Smooth solutions to the chord log-Minkowski problem}

\author[J. Hu]{Jinrong Hu}
\address{Institut f\"{u}r Diskrete Mathematik und Geometrie, Technische Universit\"{a}t Wien, Wiedner Hauptstrasse 8-10, 1040 Wien, Austria
 }
\email{jinrong.hu@tuwien.ac.at}

\author[Y. Huang]{Yong Huang}
\address{School of Mathematics, Hunan University, Changsha, 410082, Hunan Province, China}
\email{huangyong@hnu.edu.cn}

\author[J. Lu]{Jian Lu}
\address{School of Mathematics and Statistics,
  Nanjing University of Science and Technology,
  Nanjing 210094, P.R. China}
\email{lj-tshu04@163.com}

\begin{abstract}
In integral geometry generalized with Aleksandrov's variational theory, Lutwak-Xi-Yang-Zhang \cite{LXYZ} recently opened the door to researching the cone-chord measures and their log-Minkowski problem stemming from the chord integrals, named as the chord log-Minkowski problem. In the smooth category, the solvability of the chord log-Minkowski problem amounts to dealing with a nonlocal {M}onge-{A}mp\`ere equation involving a Riesz potential. In this paper, to study the chord log-Minkowski problem, we first  present some novel results on the boundary regularity of the Riesz potential. Based on these results, we obtain the regularity and existence for the chord log-Minkowski problem from the perspective of a nonlocal {M}onge-{A}mp\`ere equation and a nonlocal Gauss curvature flow equation.

\end{abstract}

\keywords{Chord log-Minkowski problem, nonlocal Monge-Amp\`ere equation,  nonlocal geometric flow}

\makeatletter
\@namedef{subjclassname@2020}{\textup{2020} Mathematics Subject Classification}
\makeatother

\subjclass[2020]{52A20, 35J96.}

\thanks{This work was supported by the National Natural Science Foundation of
  China (12171144, 12231006 to Huang, 12122106 to Lu) and the Austrian Science Fund (FWF):
10.55776/ESP1358925}
\maketitle

\baselineskip18pt

\parskip3pt

\section{Introduction}

The field of convex geometric analysis in $\rnnn$ is largely driven by the investigation of geometric invariants and geometric measures associated with convex bodies. These two notions are closely linked, differentiating geometric invariants typically yields geometric measures. Within the Brunn-Minkowski theory, the quermassintegrals stand out as the essential geometric invariants, while the area measures (due to Aleksandrov, Fenchel, and Jessen) and the curvature measures (due to Federer) constitute the main classes of geometric measures. A cornerstone problem in this context is the Minkowski-type problem, which asks for a convex body with a prescribed geometric measure. The classic instance is the Minkowski problem for the surface area measure, originally proposed and solved by Minkowski himself \cite{M897,M903} in both the polytopal and absolutely continuous settings. In the 1930s, Aleksandrov \cite{A42,A39} extended this result by solving the generalized Minkowski problem. Following Aleksandrov's ground-breaking work, a vast body of subsequent research has emerged, as documented in a series of works \cite{CY76,CW06,FJ38,F68,G09,G03,HgLYZ05,LW13,L93,LYZ04,N53,O07,P52,Zhu14,Zhu15} and the references therein. The evolution of the Minkowski problem has, in turn, fueled major developments across several disciplines, including partial differential equations, differential geometry, and optimal transport. This far-reaching impact is thoroughly reviewed in \cite{FM17}.

 The dual Brunn-Minkowski theory, as a parallelism of the Brunn-Minkowski theory,  was launched by Lutwak in the 1970s \cite{L75}, but remained relatively latent until Huang-LYZ \cite{HLYZ16} discovered a family of fundamental geometric measures, named as the dual curvature measure. These measures serve as duals to Federer's curvature measures, playing a role in the dual theory analogous to that of area measures and curvature measures in the Brunn-Minkowski framework. The dual curvature measures arise from the differentials of dual quermassintegrals. The associated Minkowski problem, termed the dual Minkowski problem, was first proposed and solved by Huang-LYZ \cite{HLYZ16}.  Since then, this problem has evolved into a highly active research area, inspiring a wealth of significant contributions and diverse applications. A representative collection of such developments includes \cite{BF19,BLYZ19,BLYZ20,CL18,HJ19,HLYZ18,HZ18,JW17,LSW20,Z17,Z18}.

We now come to a new branch in geometry from the viewpoint of probability, known as integral geometry (geometric probability). It was established by Blaschke around the mid-1930s,  see Ren \cite{R94} and Santal\'{o} \cite{S04}. Analogous to convex geometry, geometric invariants and geometric measures lie at the core of integral geometry. The chord integral, a significant quantity stemming from integral geometry, forms another major family of geometric invariants for convex bodies. It constitutes, alongside the quermassintegrals and dual quermassintegrals, one of the fundamental concepts in geometry. The chord integral $I_{q}(\Omega)$ of a convex body $\Omega\subset \rnnn$ is defined by
\begin{equation}\label{chord}
I_{q}(\Omega)=\int_{\pounds^{n}}|\Omega \cap \ell|^{q}\dd\ell, \quad {\rm real}\ q\geq 0,
\end{equation}
where $|\Omega \cap \ell|$ denotes the length of the chord $\Omega \cap \ell$,
and the integration is with respect to Haar measure on $\pounds^{n}$.


From an analytical perspective, as demonstrated in \cite{R94,S04}, for $q>1$,  there is a link between the chord integrals and the Riesz energies of the characteristic function of the convex body $\Omega$,
\begin{equation*}\label{q1} I_{q}(\Omega)=\frac{q(q-1)}{n\omega_{n}}\int_{\rnnn}\int_{\rnnn}\frac{\chi_{\Omega}(y)\chi_{\Omega}(z)}{|y-z|^{n+1-q}}\dd y \dd z, \quad  q>1.
\end{equation*}
For $0<q<1$, by using the Blaschke-Petkantschin formula in Schneider-Weil's book \cite{SW08} (see also Ludwig \cite{Lu14}),  Qin \cite{Q23} recently observed that
\begin{equation*}\label{Pse}
I_q(\Omega)=\frac{q(1-q)}{n \omega_n} \int_{\mathbb{R}^n} \int_{\mathbb{R}^n} \frac{\chi_{\Omega} (y) \chi_{\Omega^c}(z)}{|y-z|^{n-q+1}} \dd y \dd z,
    \end{equation*}
where $\Omega^c$ is the complement of $\Omega$. One sees that $I_{q}$ in fact coincides to the variant of the $(1-q)$-perimeter for $0<q<1$.

We turn to talk about another important quantity, geometric measures, coming from integral geometry. A major triumph in integral geometry is that Lutwak-Xi-Yang-Zhang \cite{LXYZ} introduced the  ($q$-th) chord measure, which is produced by the differential of chord integrals under the \emph{Minkowski combination}. Precisely, for convex bodies $\Omega$ and $L$ in $\rnnn$, they obtain
\[
\frac{d}{dt}\Big|_{t=0}I_q(\Omega+tL)=\int_{\sn}h_{L}(x)\dd F_{q}(\Omega,x), \quad q\geq 0,
\]
where the chord measure $F_q(\Omega,\cdot)$ is defined by
\begin{equation}\label{FD}
  F_{q}(\Omega,\eta)=\frac{2q}{\omega_{n}}\int_{\nu^{-1}_{\Omega}(\eta)}\widetilde{V}_{q-1}(\Omega,z)\dd \mathcal{H}^{n-1}(z), \quad {\rm Borel}\ \eta\subset \sn,
\end{equation}
  here $\widetilde{V}_{q-1}(\Omega,z)$ is called as the $(q-1)$-th (generalized) \emph{dual
  quermassintegral} of $\Omega$ with regard to $z\in \partial\Omega$ (see Section \ref{sec2} for more details).
  Chord measures serve as the fourth major family of geometric measures of convex bodies that together with Aleksandrov-Fenchel-Jessen's area measures, Federer's curvature measures and the dual curvature measures introduced in \cite{HLYZ16}.
 Furthermore, Lutwak-Xi-Yang-Zhang \cite{LXYZ} also defined the cone-chord measure $G_{q}(\Omega,\cdot)$ of $\Omega$ for $q>0$,
\begin{equation}\label{GD}
 \dd G_{q}(\Omega,\cdot)=\frac{1}{n+q-1}h_{\Omega}(\cdot)\dd F_{q}(\Omega,\cdot).
\end{equation}

 In \cite{LXYZ}, the authors proposed the Minkowski problem and the log-Minkowski problem associated with the chord measure and the cone-chord measure, termed the chord Minkowski problem and the chord log-Minkowski problem, respectively. Furthermore, they introduced the $L_{p}$ chord Minkowski problem. Subsequently, its existence was investigated from a variational perspective in \cite{GXZ23, XYZ23}, while nonuniqueness for this problem was studied in \cite{Li24}. Responding to this paper's topic, we select the chord log-Minkowski problem to elaborate.

\emph{The Chord Log-Minkowski Problem:} Suppose $q\geq 0$, given a finite Borel measure $\mu$ on the unit sphere $\sn$, what are the necessary and sufficient conditions on $\mu$ such that there exists a convex body  $\Omega$ in $\rnnn$ satisfying
\begin{equation}\label{CLMP0}
\mu=G_{q}(\Omega,\cdot)?
\end{equation}
The authors \cite{LXYZ} proved the existence of the solution of \eqref{CLMP0} by variational arguments when the index $q$ is in $[1,n+1]$ with the even assumption on $\mu$. Recently, Guo-Xi-Zhao \cite{GXZ23} solved the chord log-Minkowski problem in the polytope case when the given normal vectors are in \emph{general position} (not necessarily even).
Note that, in terms of \eqref{CLMP0}, this problem reduces to the classical log-Minkowski problem prescribing the cone volume measure when $q=1$ or $q=n+1$. This special case was first resolved by B\"{o}r\"{o}czky-LYZ  \cite{BLYZ12} for even measures $\mu$, and when $q=0$, it becomes the log-Christoffel-Minkowski problem as shown in \cite{LXYZ}.

In the 1930s, Aleksandrov \cite{A96,A42}  incorporated nonlinear partial differential equations into the Brunn-Minkowski theory. For example, he demonstrated the  mixed Aleksandrov-Fenchel volume inequality using continuous methods and eigenvalue theory following the approach of Hilbert, and formulated a variational theory of weak solutions for the classical Minkowski problem. This ground-breaking work makes a significant contribution to convex geometry and the regularity of Minkowski problem, which subsequently promotes the development of potential-theoretic problem for convex bodies as presented by Jerison \cite{J96}. The intrinsic partial differential equations  of the dual Brunn-Minkowski theory introduced by Huang-LYZ \cite{HLYZ16}  have waited a full 40 years to emerge after the birth of the dual theory in the 1970s. Whether in the Brunn-Minkowski theory or its dual theory, the common property of the intrinsic partial differential equations of both theories is the locality. We now turn to nonlocal partial differential equations that arise in integral geometry.

 Purely as an aside, when the given measure $\mu$ has a positive density, say $\frac{2q}{\omega_{n}(n+q-1)}f$, for $q>0$, the solvability of the geometric problem \eqref{CLMP0} amounts to tackling the following nonlocal
 {M}onge-{A}mp\`ere type equation involving the (generalized) dual quermassintegrals,
 \begin{equation}\label{CLMP}
h\det(\nabla^{2}h+hI)\widetilde{V}_{q-1}(\Omega_{h},\overline{\nabla} h)=f,\quad {\rm on} \ \sn,
\end{equation}
where $h:\sn \rightarrow (0,+\infty)$ is a unknown function, $\Omega_{h}$ is a convex body determined by $h$,  $\overline{\nabla}h$ is the Euclidean gradient of $h$ in $\rnnn$, $\nabla^{2}h$ is the spherical Hessian of $h$ with respect to a local orthonormal frame on $\sn$, and $I$ is the identity matrix.

Within the framework of analysis, there is a connection between the Riesz potential and $\widetilde{V}_q$ (note that $\widetilde{V}_q$ is a constant on $\pd\Omega$ when $q=0, n$), which is explicitly showed in \cite{LXYZ} for $q>0$ and in \cite{Q23} for  $-1<q<0$,  i.e., for $\forall z\in \partial \Omega$,
\begin{equation}\label{Vqq}
\widetilde{V}_{q}(\Omega,z):=\widetilde{V}_{q}(z)= \begin{cases}\frac{q}{n}\int_{\Omega}\frac{1}{|y-z|^{n-q}} \mathrm{d} y, & q>0, \\ \frac{-q}{2n}\left(\int_{\Omega^{c}}\frac{1}{|y-z|^{n-q}}\mathrm{d} y-\int_{\Omega}\frac{1}{|y-z|^{n-q}}\mathrm{d}y\right), & -1<q<0,\end{cases}
\end{equation}
where $\Omega^{c}$ is the complement of $\Omega$.

Using \eqref{Vqq}, on the one hand, for $q>1$,  as shown in \cite{LXYZ}, \eqref{CLMP} becomes
\begin{equation*}\label{1.9a}
  h\det(\nabla^{2}h+hI)
  \int_{\Omega_{h}}\frac{\dd y}{|y-\delbar h|^{n+1-q}}
  =\frac{n}{q-1}f.
\end{equation*}
On the other hand, for $0<q<1$, as given in \cite{Q23}, \eqref{CLMP} turns into
\begin{equation*}\label{1.9b}
  h\det(\nabla^2h+hI) \left(
    \int_{\Omega^c_h}\frac{\dd y}{|y-\overline{\nabla}h|^{n+1-q}}
    -\int_{\Omega_h}\frac{\dd y}{|y-\overline{\nabla}h|^{n+1-q}}
  \right)
  =\frac{2n}{1-q}f.
\end{equation*}

First, we have the following results regarding the regularity of chord log-Minkowski problem.

\begin{theo}\label{thm3}

Suppose $q\in [1,n+1]$, $k\in \N$ and $\alpha \in (0,1)$. Let $\Omega$ be a convex body in $\rnnn$. If the cone-chord measure $G_{q}(\Omega,\cdot)$ of $\Omega$ is absolutely continuous with respect to the spherical Lebesgue measure, with a strictly positive density of class $C^{k,\alpha}(\sn)$, then the boundary of $\Omega$ is $C^{k+2,\alpha}$ smooth.
\end{theo}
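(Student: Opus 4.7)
\noindent\emph{Proof plan.} The strategy is to view \eqref{CLMP} as a Monge-Amp\`ere type equation on $\uS$ for the support function $h$ of $\Omega$, and to combine Caffarelli-Krylov-Schauder regularity theory for Monge-Amp\`ere equations with the boundary regularity of the Riesz-type potential supplied by Theorem \ref{thm2} inside a bootstrap loop. When $q=1$ equation \eqref{CLMP} collapses to the classical log-Minkowski equation $h\det(\nabla^{2}h+hI)=cf$, and when $q=n+1$ the exponent $n+1-q$ in the nonlocal integral vanishes so $\widetilde{V}_{q-1}(\Omega,\cdot)$ becomes the constant $V(\Omega)$; in both endpoint cases the argument below applies with the nonlocal factor being a constant and reduces to standard local Monge-Amp\`ere regularity. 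The genuinely nonlocal case is $q\in(1,n+1)$, governed by \eqref{1.9a}, on which I will concentrate.

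\emph{Initial regularity.} Since $f$ is strictly positive and $\widetilde{V}_{q-1}(\Omega,\overline{\nabla}h(u))$ is uniformly bounded above and below (from the bounds on $\rho$ and on $1/\rho$), equation \eqref{1.9a} takes the form $\det(\nabla^{2}h+hI)=g(u)$ on $\uS$ with $g$ squeezed between two positive constants. Caffarelli's theorems then yield strict convexity of $\Omega$ and $h\in C^{1,\alpha_{0}}(\uS)$ for some $\alpha_{0}\in(0,1)$, hence $\rho\in C^{1,\alpha_{0}}(\uS)$. A direct integral estimate of the Riesz kernel then gives $\widetilde{V}_{q-1}(\Omega,\cdot)\in C^{\alpha_{0}'}(\pd\Omega)$ for some $\alpha_{0}'>0$, which is enough to run a first round of Caffarelli-Schauder and reach $h\in C^{2,\gamma}(\uS)$ for some $\gamma>0$.

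\emph{Bootstrap.} Assume inductively $h\in C^{m,\beta}(\uS)$ with $2\le m\le k+1$ and $\beta\in(0,1]$. Then $\rho\in C^{m,\beta}(\uS)$, and Theorem \ref{thm2} with index $q-1\in(0,n)$ gives $\widetilde{V}_{q-1}(\Omega,\cdot)\in C^{m,\beta'}(\pd\Omega)$: for $q>2$ one has $\beta'=\beta$, while for $1<q\le 2$ with $\beta>2-q$ one may take $\beta'$ arbitrarily close to $\beta+q-2$. Composing with the Gauss parametrization $u\mapsto\overline{\nabla}h(u)$ of $\pd\Omega$, which is $C^{m-1,\beta}$, yields
$$\Phi(h)(u):=\int_{\Omega}\frac{\dd y}{|y-\overline{\nabla}h(u)|^{n+1-q}}\in C^{m-1,\min(\beta,\beta')}(\uS).$$
Rewriting \eqref{1.9a} as $\det(\nabla^{2}h+hI)=nf/((q-1)h\Phi(h))$, the right-hand side belongs to $C^{\min(m-1,k),\,\min(\beta,\beta',\varepsilon)}(\uS)$, and the Schauder estimate for Monge-Amp\`ere upgrades $h$ to $C^{\min(m+1,k+2),\,\min(\beta,\beta',\varepsilon)}(\uS)$, so each pass gains one spatial derivative. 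After at most $k+1$ iterations we reach $h\in C^{k+2,\delta}$ for some $\delta\in(0,\varepsilon]$; a final pass with the now fully regular $\rho\in C^{k+2,\delta}$ lets us invoke Theorem \ref{thm2} with $\alpha$ close to $1$, forcing $\beta'\ge\varepsilon$, whence $h\in C^{k+2,\varepsilon}(\uS)$ and $\pd\Omega$ is $C^{k+2,\varepsilon}$.

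The main obstacle will be the combined loss in the composition $\widetilde{V}_{q-1}(\Omega,\overline{\nabla}h(\cdot))$: one spatial derivative from composing with $\overline{\nabla}h$, plus up to $2-q$ in H\"older index from Theorem \ref{thm2}(2) in the range $1<q\le 2$, where $\beta'<\beta+q-2\le\beta$. One must pick the intermediate H\"older exponents so that the sequence $\{\beta_{m}\}$ neither drops below the threshold $2-q$ (which would block the use of Theorem \ref{thm2}(2)) nor below $\varepsilon$ at the final pass; this is handled by always invoking Theorem \ref{thm2} with the maximum H\"older regularity currently available on $\rho$ rather than the minimum needed to close the current Schauder step, and by exploiting the freedom $\alpha\in(2-q,1]$ in its hypothesis.
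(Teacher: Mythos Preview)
Your proposal is correct and follows essentially the same route as the paper: start from Caffarelli's $C^{1,\alpha}$ theory for Monge--Amp\`ere with merely bounded right-hand side, feed the resulting boundary regularity of $\rho$ into Theorem~\ref{thm2} to control $\widetilde{V}_{q-1}(\Omega,\cdot)$, and then bootstrap via Caffarelli--Schauder. The paper's own argument (stated as Theorem~\ref{elli}) is in fact terser than yours; you track the H\"older-exponent bookkeeping in the range $1<q\le 2$ and the composition loss through $\overline{\nabla}h$ more explicitly than the paper does.
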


As a direct consequence of Theorem \ref{thm3}, we have
\begin{coro}
Suppose $q\in [1,n+1]$ and $\alpha \in (0,1)$. For any given positive function $f\in C^{\alpha}(\sn)$, there exists a solution $h\in C^{2,\alpha}(\sn)$ to Eq. \eqref{CLMP}. If $f$ is smooth, then the solution is also smooth.
\end{coro}

 Because of the involvement of the nonlocal term $\widetilde{V}_{q-1}$ and the limited understanding of its boundary regularity, we cannot directly apply Caffarelli's  regularity results \cite{Ca89,Ca90,Ca901} for local {M}onge-{A}mp\`ere equations to improve the regularity on the solution to \eqref{CLMP} in the Aleksandrov sense. However, with the boundary regularity of $\widetilde{V}_{q-1}$ established in Theorem \ref{thm2},  Caffarelli's results \cite{Ca89,Ca90,Ca901} can be applied. Therefore, together with the existence result for weak solutions of \eqref{CLMP} with $1\leq q\leq n+1$ obtained in \cite[Theorem 1.2]{LXYZ} and \cite[Theorem 1.3]{GXZ23}, we can obtain Theorem \ref{thm3}.

Since there are no results for weak solutions to the chord log-Minkowski problem for $q>n+1$, in this case, we treat the existence of a solution by using a geometric flow, which corresponds to the solvability of \eqref{CLMP}. The geometric flow involving Gauss curvature was first introduced
by Firey \cite{F74}, it describes the shape of worn stone. Since then, numerous variants of Gauss curvature flow have emerged, for instance, Chou-Wang
\cite{CW00} employed a logarithmic Gauss curvature flow to tackle the
classical Minkowski problem, Li-Sheng-Wang \cite{LSW20} and Chen-Huang-Zhao \cite{CHZ19} respectively made use of the related
Gauss curvature flows to solve the dual Minkowski type problems. In contrast to previous Gauss curvature flows, we study a  family of convex hypersurfaces $\partial \Omega_{t}$ parameterized by a smooth map
$X: \mathbb{S}^{n-1}\times[0,T) \rightarrow \R^{n}$
 satisfying the following nonlocal Gauss curvature flow equation:
\begin{equation}\label{flow}
  \begin{split}
  \left\{
\begin{array}{lr}
    \frac{\pd X}{\pd t} (x,t)
    = -\frac{f(\nu) \kappa}{\widetilde{V}_{q-1}(\Omega_t,X)} \nu +  X,\\
    X(x,0)= X_{0}(x),
    \end{array}\right.
  \end{split}
\end{equation}
with $I_{q}(\Omega_0)
  =\frac{2q\int_{\uS}f\dd x}{(q+n-1)\omega_{n}}$.
Here $\nu$ is the unit outer normal vector of the hypersurface $\partial \Omega_t$ at the boundary
point $X(x,t)$,  $\kappa$ is the Gauss curvature of
$\partial \Omega_{t}$ at $X(x,t)$, and $T$ is the maximal time for which the solution exists. To our best of knowledge, the flow \eqref{flow} has never been studied before.


We now establish that the solution $h(x,t)$ of the flow \eqref{flow} exists for all time $t\in(0,+\infty)$ and converges as $t\to\infty$ to a function $h(x)$, which is a solution of \eqref{CLMP}. We emphasize that the non-trivial challenge in analyzing the flow comes from the nonlocal term  $\widetilde{V}_{q}$ associated with the boundary. This term fundamentally reflects the inherent complexity of handling the flow \eqref{flow}, presenting substantial difficulties. Our result is as follows.
\begin{theo}\label{thm4}
Suppose $q\in(3,+\infty)$.  Let $\Omega_{0}$ be a smooth, origin-symmetric and strictly convex body in $\rnnn$ satisfying $I_{q}(\Omega_{0})=\frac{2q\int_{\sn}f\dd x}{(q+n-1)\omega_{n}}$. Let $f$ be a positive, even and smooth function on $\sn$. Then
there exists a smooth, origin-symmetric and strictly convex solution $\Omega_{t}$ to the
flow \eqref{flow} for all $t>0$,  and  its subsequence in $C^{\infty}$ converges to a smooth, origin-symmetric and strictly convex solution. As a result, there exists a smooth, origin-symmetric and strictly convex solution to Eq. \eqref{CLMP}.
\end{theo}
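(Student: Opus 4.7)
The plan is to recast the flow \eqref{flow} as the scalar Monge--Amp\`ere type parabolic equation for the support function $h(x,t)$ of $\Omega_t$, namely
\begin{equation*}
\partial_t h = -\frac{f(x)}{\det(\nabla^{2} h + hI)\,\widetilde{V}_{q-1}(\Omega_t,\overline{\nabla}h)} + h,\qquad h(\cdot,0)=h_{\Omega_0},
\end{equation*}
on $\uS\times[0,T)$. Short-time existence of a smooth, origin-symmetric, strictly convex solution is standard via a fixed-point argument in which the nonlocal coefficient $\widetilde{V}_{q-1}(\Omega_t,\overline{\nabla}h)$ is treated as a ``frozen'' datum whose continuous dependence on $h$ is furnished by Theorem~\ref{thm2}; evenness of $f$ and origin-symmetry of $\Omega_0$ persist along the flow by uniqueness. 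Two global identities then drive the long-time analysis. Combining the variational formula $\frac{d}{dt}I_q(\Omega_t) = \int (\partial_t h)\,dF_q$, the chord identity $\int h\,dF_q = (n+q-1)I_q$, and the explicit form of $\partial_t h$, one obtains
\begin{equation*}
\frac{d}{dt}I_q(\Omega_t) = (n+q-1)\left(I_q(\Omega_t) - \frac{2q\int_{\uS}f\,dx}{(n+q-1)\omega_n}\right),
\end{equation*}
so the initial normalization forces $I_q(\Omega_t)\equiv I_q(\Omega_0)$, and in particular $\int_{\uS} h\det(\nabla^2h+hI)\widetilde{V}_{q-1}\,dx\equiv\int_{\uS} f\,dx$. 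Setting $J(\Omega_t)=\int_{\uS} f\log h\,dx$, Cauchy--Schwarz combined with this last identity yields $\frac{d}{dt}J(\Omega_t)\le 0$, with equality exactly at solutions of \eqref{CLMP}.

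Next I would establish uniform a priori estimates along the flow. For the $C^0$ bounds, origin-symmetry together with the preservation of $I_q$ produces an upper bound $h_{\max}(t)\le C$ via a standard argument ruling out long-thin symmetric bodies with fixed chord integral, and a lower bound $h_{\min}(t)\ge c>0$ then follows from the monotonicity of $J$ via a log-Minkowski type entropy argument once the upper bound is secured. With $0<c\le h\le C$ in hand, $|\overline{\nabla} h|$ is uniformly bounded and Theorem~\ref{thm2} upgrades $\widetilde{V}_{q-1}(\Omega_t,\overline{\nabla}h)$ to a $C^{k,\varepsilon}$ function of $x$ whose norm is controlled by $\norm{h}_{C^{k,\varepsilon}}$. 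The $C^2$ estimate---two-sided bounds on the eigenvalues of $\nabla^2h+hI$---is then obtained by the auxiliary-function method of Chou--Wang and Li--Sheng--Wang: one evolves $\log\det(\nabla^2h+hI)$ and $\Delta h+nh$, applies the maximum principle, and absorbs the terms produced by differentiating the nonlocal factor $\widetilde{V}_{q-1}$ (again controlled via Theorem~\ref{thm2}) into the lower-order part. Here the hypothesis $q>3$ is what guarantees the smoothness margin required to differentiate $\widetilde{V}_{q-1}$ twice with bounds depending only on the current $C^2$-norm of $h$.

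With uniform $C^2$ bounds and uniform strict convexity in hand, the equation becomes uniformly parabolic fully nonlinear with $C^{k,\varepsilon}$ coefficients; Krylov--Safonov gives $C^{2,\alpha}$ estimates and Schauder bootstrapping produces uniform $C^{k,\varepsilon}$ bounds for all $k$, so the solution exists for all $t\in(0,+\infty)$ and remains smooth, origin-symmetric and strictly convex. For the asymptotic behavior, monotonicity and boundedness of $J$ yield $\int_0^\infty |\frac{d}{dt}J(\Omega_t)|\,dt<\infty$, so along some $t_k\to\infty$ one has $\frac{d}{dt}J(\Omega_{t_k})\to 0$; by Arzel\`a--Ascoli a $C^\infty$-convergent subsequence $\Omega_{t_k}\to\Omega_\infty$ exists, and the Cauchy--Schwarz equality case forces $\Omega_\infty$ to satisfy $h_\infty\det(\nabla^2h_\infty+h_\infty I)\widetilde{V}_{q-1}(\Omega_\infty,\overline{\nabla}h_\infty)=f$, which is exactly equation~\eqref{CLMP}. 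The main obstacle throughout is the nonlocal factor $\widetilde{V}_{q-1}$, which couples the local Monge--Amp\`ere operator to the global geometry of $\Omega_t$ and rules out a direct application of classical Gauss curvature flow techniques; it is precisely the boundary regularity of Theorem~\ref{thm2}, combined with the condition $q>3$, that makes this coupling tractable as a smooth lower-order perturbation.
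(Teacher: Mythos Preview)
Your outline matches the paper's approach closely, but two points need correction.

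First, you have the roles of $I_q$ and $J$ in the $C^0$ estimate reversed. Preservation of $I_q$ does \emph{not} give an upper bound on $h_{\max}$: an origin-symmetric body can have arbitrarily large diameter while $I_q$ stays fixed, provided it simultaneously thins in orthogonal directions. The paper obtains the upper bound from the monotonicity of $J$, using $h(x,t)\ge R_t|\langle x,x_t\rangle|$ to write $J(0)\ge J(t)\ge (\log R_t)\int_{\uS} f\,dx - C_n f_{\max}$. The lower bound then comes from $I_q$ preservation via Blaschke selection: if $h_{\min}(t_k)\to 0$ with $h_{\max}$ already bounded, a subsequential Hausdorff limit lies in a hyperplane and has $I_q=0$, contradicting the continuity of $I_q$.

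Second, and more substantively, the $C^2$ estimate cannot be closed by invoking Theorem~\ref{thm2} as you propose: that theorem bounds $\|\widetilde{V}_{q-1}\|_{C^{2}}$ only in terms of $\|\rho\|_{C^{2}}$, which is precisely the quantity you are trying to control, so the argument is circular. The paper instead derives explicit pointwise formulas (Lemmas~\ref{lem006}--\ref{lem008}) showing that $\nabla^2_{ij}\widetilde{V}_{q-1}(\Omega_t,\overline{\nabla}h) = b_{ijk}c^k + b_{ij}c + b_{ik}b_{jl}c^{kl}$, where the coefficients $c^k,c,c^{kl}$ are integrals bounded purely by $C^0$--$C^1$ data. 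The dangerous third-derivative term $b_{ijk}c^k$ carries the \emph{same} coefficient $c^k$ as the first-derivative formula $\nabla_i\widetilde{V}_{q-1}=b_{ik}c^k$; in the auxiliary function $\Lambda=\log\TR b - \tilde{A}\log h + \tilde{B}|\nabla h|^2$, the first-order condition $\Lambda_l=0$ at the maximum then converts $(\TR b)^{-1}\sum_k b_{kkl} + 2\tilde{B}h_lh_{ll}$ into $\tilde{A}h_l/h$, so the third-derivative contribution drops out entirely. This structural cancellation, not Theorem~\ref{thm2}, is what closes the $C^2$ estimate, and it is also the exact place where $q>3$ enters, since the explicit second-derivative formula for $\widetilde{V}_{q-1}$ requires $q-1>2$. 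Theorem~\ref{thm2} is used only afterwards, in the bootstrap from $C^2$ to $C^{2,\varepsilon}$ and higher.
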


 It is worth emphasizing that when $q > n+1$, Theorem \ref{thm4} provides the first existence result for a smooth solution to the chord log-Minkowski problem; while the existence of weak solutions remains open.  Here the condition $q>3$ ensures the completion of a priori estimates for \eqref{flow}. Note that the presence of the nonlocal term $\widetilde{V}_{q-1}$ in \eqref{CLMP} makes it a novel {M}onge-{A}mp\`ere equation arising in Minkowski-type problems. Hence, the treatment of \eqref{flow} is different from the earlier Minkowski flows.

 To prove Theorem \ref{thm4}, it is essential to derive the smoothness estimation on $\widetilde{V}_{q}$ along the boundary of a convex body.
In previous studies, the Riesz potential (cf. \cite{RI26,RI30,RI38,RI49}) has been used to derive embedding results for functions in Sobolev spaces,  H\"{o}lder spaces, and other spaces (see, e.g,. \cite{BA80,Bu23,Ga58,HL28,HL30,L83,N59,Re09,SF3,SI17,SO38} and the references therein). However, as far as we know, regularity estimates of the Riesz potential on the boundary of a domain have largely remained elusive.

In light of \eqref{Vqq}, since $\widetilde{V}_q$ is invariant under translations in $\R^n$, we will always assume that the convex
body $\Omega\subset\R^n$ contains the origin in its interior throughout this paper.
Then the boundary $\pd\Omega$ can be given via its radial function
$\rho : \uS\to(0,+\infty)$ as
\begin{equation*} \label{eq:21}
  \pd\Omega =\set{\rho(u)u : \ \ \forall u\in\uS}.
\end{equation*}

 We have the following boundary regularity results for $\widetilde{V}_{q}$.

\begin{theorem}\label{thm2} The generalized dual quermassintegral $\widetilde{V}_q$   demonstrates the following regularity properties.
\begin{itemize}
\item[$(i)$]
Assume $q>1$, $q\neq n$. Let $k$ be a positive integer.
If $\rho\in C^{k,\alpha}(\uS)$ for some $\alpha\in [0,1]$,
then $\widetilde{V}_{q}\in C^{k,\alpha}(\pd\Omega)$, and there holds
\begin{equation*}\label{eq:223}
  \Vert\widetilde{V}_{q}\Vert_{k,\alpha;\pd\Omega} \leq
  \hat{C}
\end{equation*}
for a positive constant $\hat{C}$ depending only on $n,q,\alpha, k, \norm{\rho}_{k,\alpha}$ and $ 1/{\rho_{\min}}$.

\item[$(ii)$]
When $0<q\leq 1$.
Let $k$ be a positive integer.
If $\rho\in C^{k,\alpha}(\uS)$ for some $\alpha\in (1-q,1]$,
then $\widetilde{V}_q\in C^{k,\alpha'}(\pd\Omega)$ for any $\alpha'\in (0, \alpha+q-1)$, and there holds
\begin{equation*}\label{eq:22}
  \Vert\widetilde{V}_q\Vert_{k,\alpha';\pd\Omega} \leq
  \bar{C}
\end{equation*}
for a positive constant $\bar{C}$ depending only on $n, q, \alpha, \alpha', k, \norm{\rho}_{k,\alpha}$ and $ 1/{\rho_{\min}}$.

\item[$(iii)$]
When $-1<q< 0$.
Let $k$ be a positive integer.
If $\rho\in C^{k,\alpha}(\uS)$ for some $\alpha\in (-q,1)$,
then $\widetilde{V}_q\in C^{k-1,\alpha^{*}}(\pd\Omega)$ for any $\alpha^{*}\in (0, \alpha+q)$, and there holds
\begin{equation*}\label{eq:223}
  \Vert\widetilde{V}_q\Vert_{k-1,\alpha^{*};\pd\Omega} \leq
  C^{*}
\end{equation*}
for a positive constant $C^{*}$ depending only $n, q, \alpha,  \alpha^{*}, k, \norm{\rho}_{k,\alpha}$ and $ 1/{\rho_{\min}}$.
\end{itemize}
\end{theorem}

We briefly note that when $-1<q<0$, $\widetilde{V}_{q}(\cdot)$ is indeed named as the nonlocal (fractional) mean curvature, which is given by the variational formula of $(-q)$-perimeter when the boundary of a domain is of class $C^{1,\alpha}$ for $\alpha>-q$, as first discovered in \cite{CR10}, see also \cite{CFM18,F15}. Note that a remarkable phenomenon occurs,  the limit of the nonlocal mean curvature $\widetilde{V}_{q}$ as $q\rightarrow (-1)^{+}$ is the classical mean curvature, which is displayed in \cite{AV14,CDNP21,LXYZ}. Within these years, some important results with regard to the regularity of the nonlocal mean curvature have been derived. The following results come. Figalli-Fusco-Maggi-Millot-Morini \cite[Propostion 6.3]{F15} say that when $\partial \Omega$ is of class $C^{1,\alpha}$, with $\alpha >-q$, $\widetilde{V}_{q}(z)$ is finite for all $z\in \partial \Omega$ and is a continuous function on $\partial \Omega$. Moreover, if $\partial \Omega$ is of class $C^{2,\alpha}$ with $\alpha>-q$, as exposed in \cite[Proposition 2.1]{Cab18},  then $\widetilde{V}_{q}(z)\in C^{1}(\partial \Omega)$. The third assertion in Theorem \ref{thm2}  may represent some progress toward the higher regularity estimates of nonlocal mean curvature.

To prove Theorem \ref{thm2}, we study a generalized Riesz potential on the boundary of a convex body $\Omega$:
\begin{equation} \label{Na}
  N_{a}(z)
  =\int_{\Omega} \omega\Bigl( \frac{y-z}{|y-z|} \Bigr) \frac{\phi(y)}{|y-z|^{n-a}} \dd y,
  \quad a>0,\
  \forall z\in\pd\Omega,
\end{equation}
where $\omega$ and $\phi$ are functions defined on $\uS$ and $\overline{\Omega}$
respectively. When $\omega=\phi=1$, $N_a$ reduces to $\widetilde{V}_a$. Note that $N_a$ remains unchanged under translation in $\R^n$. Observe first that if
$\rho, \omega\in C^{\alpha}(\uS)$ for some $\alpha\in[0,1]$,
and $\phi\in L^{\infty}(\Omega)$,
then for each $a>0$, $N_a\in C^{\alpha'}(\pd\Omega)$ for some
$\alpha'\in[0,\alpha]$.
This fact can be easily seen by a direct integral estimation.
However, higher regularity seems impossible to obtain via direct integral estimations and one requires more sophisticated techniques. Importantly, by employing a spherical transformation technique, we can improve the regularity.

\begin{theorem}\label{thm1} The generalized Riesz potential defined in equation \eqref{Na} exhibits the following regularity properties.
\begin{itemize}
\item[$(i)$] When $a>1$.
If $\rho\in C^{k,\alpha}(\uS)$, $\omega\in C^{k,\alpha}\cap C^{1,1}(\uS)$
and $\phi\in C^{k-1,\alpha}(\Omega)$ for some integers $k\geq1$ and some
$\alpha\in[0,1]$,
then $N_a\in C^{k,\alpha}(\pd\Omega)$, and there holds
\begin{equation*}\label{eq:22}
  \norm{N_a}_{k,\alpha} \leq
C
\end{equation*}
for a positive constant $C$ depending only on $n, a, \alpha, k, \norm{\omega}_{k,\alpha}, \norm{\omega}_{1,1}, \norm{\phi}_{k-1,\alpha},
  \norm{\rho}_{k,\alpha}$ and $ 1/{\rho_{\min}}$.
\item[$(ii)$]
When $0<a\leq 1$.
If $\rho\in C^{k,\alpha}(\uS)$, $\omega\in C^{k,\alpha}\cap C^{1,1}(\uS)$ and $\phi\in C^{k,\alpha}(\Omega)$
for some integers $k\geq1$ and some $\alpha\in(1-a,1]$,
then $N_a$ is $C^{k,\alpha'}(\pd\Omega)$ for any $\alpha'\in(0,\alpha+a-1)$, and there holds
\begin{equation*}\label{eq:23}
  \norm{N_a}_{k,\alpha'} \leq
 \widetilde{ C}
\end{equation*}
for a positive constant $\widetilde{C}$ depending only on $n, a, \alpha, \alpha', k, \norm{\omega}_{k,\alpha}, \norm{\omega}_{1,1},
  \norm{\phi}_{k,\alpha}, \norm{\rho}_{k,\alpha}$ and  $1/{\rho_{\min}}.$
  \end{itemize}
\end{theorem}
Theorem \ref{thm1} says that for $a>1$, $N_a$ has the same smoothness as
$\pd\Omega$, this is clearly the optimal regularity for $N_a$ on $\pd\Omega$.
For $0<a\leq1$, the regularity stated in (ii) is also optimal. Notice that $\omega\in C^{1,1}$ in Theorem \ref{thm1} is
automatically satisfied when $k\geq2$.
For $k=1$, if we only assume $\omega\in C^{1,\alpha}$ with $\alpha<1$,
$N_a$ is still $C^{1,\alpha}$ when $a\geq2$,
$C^{1,\tilde{\alpha}}$ for some $\tilde{\alpha}\in[0,\alpha]$ when $1<a<2$,
and $C^{1,\alpha'}$ for some $\alpha'\in(0,\alpha+a-1)$ when $0<a\leq1$.
See remark \ref{rmk0107}.  It is worth mentioning that, Theorem \ref{thm1} also helps to obtain the existence of smooth solutions to other chord Minkowski-type problems, such as the $L_p$ chord Minkowski problem. Further discussion is presented in subsequent papers (see, e.g., \cite{CLL26,Hu25}).

The organization of this paper goes as follows. In Section \ref{sec2}, we list some basic facts on the convex  geometry, differential geometry and integral geometry. In Section \ref{sec3}, we provide the proofs of Theorem \ref{thm2} and Theorem \ref{thm1}, which establish regularity results for the Riesz potential on the boundary of a convex domain. As a consequence, we obtain Theorem \ref{thm3}. Moving forward to Section \ref{sec4}, we establish the related nonlocal Gauss curvature flow and the functional, moreover, $C^{0}$, $C^{1}$ estimates are also derived. In Section \ref{sec5}, the derivatives and variations of the Riesz potential are obtained.  We show $C^{2}$ estimates in Section \ref{sec6}. The proof of Theorem \ref{thm4} will be presented in  Section \ref{sec7}.

\section{Preliminaries}
\label{sec2}
Our setting will be in the $n$-Euclidean space ${\rnnn}$. Denote by $B_{R}$ the Euclidean ball centred at origin with radius $R>0$, by ${\sn}$ the unit sphere.  $\omega_{n}$ is the volume of unit ball in $\rnnn$. For $y,z\in {\rnnn}$, $\langle y,z\rangle$ denotes the standard inner product.  In what follows, for the readers' convenience, we will give some basics on the convex geometry, differential geometry and integral geometry.


  A convex body is a compact convex set of ${\rnnn}$ with non-empty interior. There are many standard references about convex body, such as the books of Gardner \cite{G06} and Schneider \cite{S14}.

For two convex bodies  $\Omega_1, \Omega_2$ in $\rnnn$, the \emph{Minkowski combination}  $\Omega_1+t\Omega_2$ with $t\geq 0$ can be defined by
\[
\Omega_1+t\Omega_2=\{y+tz: y\in \Omega_1, z\in \Omega_2\}.
\]

As in  \cite{LXYZ}, the (extended) radial function $\rho_{\Omega,z}(x):\rnnn \backslash \{0\}\rightarrow \R$ regarding to $z\in \rnnn$, is defined by
\begin{equation*}\label{p1}
\rho_{\Omega,z}(y)=\max\{\lambda\in\R:\lambda y\in \Omega-z\},\quad y\in \rnnn \backslash \{0\}.
\end{equation*}
From the definition, it is clear to observe that
$ \rho_{\Omega,y+z}=\rho_{\Omega-y,z}=\rho_{\Omega-y-z}, \quad y,z\in \rnnn $.

The (extended) support function $h_{\Omega,z}:\rnnn\rightarrow \R$ of $\Omega$ with respect to $z$, is defined by
\begin{equation*}\label{h1}
h_{\Omega,z}(x)=\max \{\langle x,y\rangle:y \in \Omega-z\}, \quad x\in \rnnn.
\end{equation*}
 Obviously, $ h_{\Omega,y+z}(x)=h_{\Omega-y,z}=h_{\Omega}(x)-\langle y+z,x\rangle, \quad x,y,z\in \rnnn.$

For the support function and radial function of $\Omega$ with respect to the origin, we will later write $h_{\Omega}$, $\rho_{\Omega}$ rather than $h_{\Omega,o}$, $\rho_{\Omega,o}$.

For a convex body $\Omega$ containing the origin in its interior, the radial map of $\Omega$ is:
\begin{equation}\label{radialMap}
r_{\Omega}: \sn\rightarrow \partial \Omega, \quad {\rm by} \ r_{\Omega}(u)=\rho_{\Omega}(u)u \ {\rm for} \ u\in \sn.
\end{equation}

Given a compact convex set $\Omega$ in $\rnnn$, for $\mathcal{H}^{n-1}$ almost all $X\in \partial \Omega$, the outer unit normal of $\Omega$ at $X$ is unique. In this case, we denote the Gauss map by $\nu_{\Omega}$, which maps $X\in \partial \Omega$ to its unique outer unit normal.
Meanwhile, for $\omega\subset {\sn}$, the inverse Gauss map $\nu_{\Omega}$ is defined as
\begin{equation*}
\nu_{\Omega}^{-1}(\omega)=\{X\in \partial \Omega: \nu_{\Omega}(X) {\rm  \ is \ defined \ and }\ \nu_{\Omega}(X)\in \omega\}.
\end{equation*}

For a smooth and strictly convex body $\Omega$, namely, the boundary of $\Omega$ is of class $C^{\infty}$ and of positive Gauss curvature, the support function of $\Omega$ can be expressed as
\begin{equation*}\label{hhom}
h_{\Omega}(x)=\langle x,\nu^{-1}_{\Omega}(x)\rangle=\langle\nu_{\Omega}(X), X\rangle, \ {\rm where} \ x\in {\sn}, \ \nu_{\Omega}(X)=x \ {\rm and} \ X\in \partial \Omega.
\end{equation*}
 Let $\{e_{1},e_{2},\ldots, e_{n-1}\}$ be a local orthonormal frame on ${\sn}$, $h_{i}$ and $h_{ij}$ be the first and second order covariant derivatives of $h_{\Omega}$ on ${\sn}$ with respect to the frame, then we have
\begin{equation*}\label{Fdef}
\nu^{-1}_{\Omega}(x)=\overline{\nabla}h_{\Omega}(x)=\sum_{i} h_{i}e_{i}+h_{\Omega}(x)x=\nabla h_{\Omega}(x)+h_{\Omega}(x)x,
\end{equation*}
where $\nabla$ is the (standard) spherical gradient. Moreover, the Gauss curvature $\kappa$ of $\partial\Omega$ at $X$ is expressed as
\begin{equation*}
\kappa=\frac{1}{\det(h_{ij}+h\delta_{ij})}.
\end{equation*}

  As a counterpart of \eqref{chord}, given a convex body $\Omega$ in $\rnnn$, the chord integral $I_{q}(\Omega)$ can also be represented in terms of $X$-rays as (see \cite{G06})
\begin{equation*}\label{I1}
I_{q}(\Omega)=\frac{1}{n\omega_{n}}\int_{\sn}\int_{\Omega|u^{\bot}}X_{\Omega}(y,u)^{q}\dd y \dd u,\quad q> -1,
\end{equation*}
where $\Omega|u^{\bot}$ is the image of the orthogonal projection of $\Omega$ onto $u^{\bot}$, and the parallel $X$-ray of $\Omega$ is given as
\begin{equation*}\label{}
X_{\Omega}(y,u)=|\Omega \cap (y+\rn u)|, \quad y \in \rnnn, \ u\in \sn.
\end{equation*}
Note that the relation between the $X$-ray function and the radial function is revealed as
\begin{equation*}\label{xray}
X_{\Omega}(y,u)=\rho_{\Omega,z}(u)+\rho_{\Omega,z}(-u), \ {\rm  when} \ \Omega \cap (y+\rn u)=\Omega \cap (z+\rn u)\neq\emptyset.
\end{equation*}
An important property of $I_{q}$ is that it is homogeneous of degree $(n+q-1)$ for $q>-1$ (see \cite[Lemma 3.2]{LXYZ}).

 As established in \cite[Theorem 5.5, Lemma 5.6]{LXYZ}, for a convex body $\Omega$ in $\rnnn$, for $q>0$,
the chord measure $F_{q}(\Omega,\cdot)$ defined in \eqref{FD} arises from the first variation of chord integrals under the Minkowski combination, while the cone-chord measure $G_{q}(\Omega,\cdot)$ defined in \eqref{GD} stems from the first variation under the log-Minkowski sum.
Note that the homogeneities of the chord measure and cone-chord measure are $(n+q-2)$ and $(n+q-1)$ respectively.

Note that for $q>0$, by \cite[Theorem 4.3]{LXYZ}, one sees
\begin{equation}\label{c-i-formula}
  I_{q}(\Omega)=\frac{1}{q+n-1}\int_{\uS}h_{\Omega}(x)\dd F_{q}(\Omega,x).
\end{equation}

 As shown in \cite{LXYZ}, for $q\in \R$, on the one hand, the $q$-th (generalized) dual quermassintegral $\widetilde{V}_{q}(\Omega,z)$ of $\Omega$ with respect to $z\in \Omega$ is defined by
\begin{equation*}\label{VQ}
\widetilde{V}_{q}(\Omega,z)=\widetilde{V}^{+}_{q}(\Omega,z).
\end{equation*}
On the other hand, for $q>0$, and $z\notin \Omega$, define $\widetilde{V}_{q}(\Omega,z)$ by
\[
\widetilde{V}_{q}(\Omega,z)=\widetilde{V}^{+}_{q}(\Omega,z)-\widetilde{V}^{-}_{q}(\Omega,z).
\]
Here
\[
\widetilde{V}^{+}_{q}(\Omega,z)=\frac{1}{n}\int_{S^{+}_{z}}\rho_{\Omega,z}(u)^{q}du,\quad \widetilde{V}^{-}_{q}(\Omega,z)=\frac{1}{n}\int_{S^{-}_{z}}|\rho_{\Omega,z}(u)|^{q}du,
\]
where
\[
S^{+}_{z}=\{u\in \sn: \ \rho_{\Omega,z}(u)>0\},\quad S^{-}_{z}=\{u\in \sn: \ \rho_{\Omega,z}(u)<0\}.
\]
Since $S^{+}_{z}=\sn$ when $z\in {\rm int} \ \Omega$, $\rho_{\Omega,z}(u)=0$ when $z\in \partial \Omega$ and $u$ is in the interior of $\sn \backslash S^{+}_{z}$, we obtain
\[
\widetilde{V}_{q}(\Omega,z)=\frac{1}{n}\int_{\sn}\rho_{\Omega,z}(u)^{q}du,
\]
when either $q\in \R$ and $z\in {\rm int} \ \Omega$, or $q>0$ and $z\in \partial \Omega$.

In the whole paper,  unless it causes confusion, we abbreviate $h_{\Omega}(\cdot)$, $\rho_{\Omega}(\cdot)$ as $h(\cdot)$, $\rho(\cdot)$, and abbreviate $h_{\Omega,z}(\cdot)$, $\rho_{\Omega,z}(\cdot)$ as  $h_z(\cdot)$, $\rho_z (\cdot)$ respectively.

\section{Boundary regularity of the Riesz potential} \label{sec3}
In this section, our purpose is to prove Theorems \ref{thm2} and \ref{thm1}. Since Theorem \ref{thm1} is a generalization of Theorem \ref{thm2}, we only need to prove Theorem \ref{thm1}.  Recall
\begin{equation} \label{Na2}
  N_{a}(z)
  =\int_{\Omega} \omega\Bigl( \frac{y-z}{|y-z|} \Bigr) \frac{\phi(y)}{|y-z|^{n-a}} \dd y,
  \quad
  \forall z\in\pd\Omega,
\end{equation}
where $a>0$, and $\omega,\phi$ are functions defined on $\uS$ and $\Omega$
respectively.

 We first present the following
H\"older regularity result of $N_{a}$ for $a>0$.

\subsection{H\"older regularity of $N_a$ for $a>0$}

 The local H\"older continuity of the Riesz potential for $z\in\R^n$ is well known, from which the boundary's H\"older continuity follows directly.
For the readers' convenience and later using in the rest of this paper, here we provide a complete proof.

We begin with conducting a basic $C^0$ estimate.

\begin{lemma}\label{lem1128}
Let $a>0$.
Assume $\omega\in L^{\infty}(\uS)$ and $\phi\in L^{\infty}(\Omega)$.
For any $z\in\R^n$, let $R$ be a positive number such that
$\Omega\subset B_{R}(z)$, then
\begin{equation}\label{eq:106}
  |N_{a}(z)|
  \leq
  \frac{n\omega_n}{a}
  \norm{\omega}_{\infty} \norm{\phi}_{\infty}
  R^a.
\end{equation}
Therefore, for any compact set $\Omega'\supset\Omega$,
$N_a\in L^{\infty}(\Omega')$, and there is
\begin{equation} \label{Na-C0}
  \sup_{\Omega'}|N_a| \leq
  \frac{n\omega_n}{a}
  \norm{\phi}_{\infty}
  \norm{\omega}_{\infty}
  \DIAM(\Omega')^a.
\end{equation}
\end{lemma}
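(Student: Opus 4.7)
The plan is to directly estimate the integrand of \eqref{Na2} in absolute value, using the $L^\infty$ bounds on $\omega$ and $\phi$, and then compare the resulting radial integral against the corresponding integral over the ball $B_R(z)$. Concretely, I would first write
\begin{equation*}
  |N_a(z)| \leq \norm{\omega}_\infty \norm{\phi}_\infty \int_{\Omega} \frac{\dd y}{|y-z|^{n-a}}
  \leq \norm{\omega}_\infty \norm{\phi}_\infty \int_{B_R(z)} \frac{\dd y}{|y-z|^{n-a}},
\end{equation*}
using the inclusion $\Omega \subset B_R(z)$ and positivity of the integrand. Passing to polar coordinates centered at $z$, the last integral evaluates to
\begin{equation*}
  \int_0^R \int_{\uS} \frac{r^{n-1}}{r^{n-a}} \dd\sigma \dd r = n\omega_n \int_0^R r^{a-1} \dd r = \frac{n\omega_n}{a} R^a,
\end{equation*}
which is finite precisely because $a>0$. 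This gives \eqref{eq:106} at once.

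For the second assertion, I would observe that for any $z\in\Omega'$, every $y\in\Omega\subset\Omega'$ satisfies $|y-z|\leq\DIAM(\Omega')$, so $\Omega\subset B_{\DIAM(\Omega')}(z)$. Applying the first part with the choice $R=\DIAM(\Omega')$ yields the uniform bound \eqref{Na-C0}.

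There is no real obstacle in this argument; it is a bare-hands $L^\infty$ estimate whose only subtlety is that the kernel $|y-z|^{a-n}$ is locally integrable exactly when $a>0$, which is precisely the standing hypothesis. The explicit constant $n\omega_n/a$ in the bound comes directly from integrating $r^{a-1}$ on $[0,R]$ against the surface measure of $\uS$, so no further refinement is needed.
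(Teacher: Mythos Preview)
Your argument is correct and follows essentially the same route as the paper: bound the integrand by $\norm{\omega}_\infty\norm{\phi}_\infty|y-z|^{a-n}$, enlarge the domain to $B_R(z)$, and compute the remaining radial integral in polar coordinates. The paper first rewrites $N_a(z)$ in spherical coordinates about $z$ before estimating, but this leads to the same radial integral $\int_0^R r^{a-1}\dd r$ and the same constant.
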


\begin{proof}{}
For any given $z\in\R^n$,
if $\rho_z(u)>0$ for some $u\in\uS$, define
\begin{equation*}
  \sigma_z(u)=\inf\set{\lambda\geq0:z+\lambda u\in\Omega}.
\end{equation*}
Note that $0\leq\sigma_z(u)\leq\rho_z(u)$.
Using the spherical coordinates with respect to $z$,
we can rewrite $N_a(z)$ as
\begin{equation}\label{eq:90}
  N_{a}(z)
  =
  \int_{|u|=1 \atop \rho_z(u)>0} \omega(u) \dd u
  \int_{\sigma_z(u)}^{\rho_z(u)} \phi(z+ru) r^{a-1} \dd r.
\end{equation}
Since $\Omega\subset B_R(z)$, we have $\rho_z(u)\leq R$.
Therefore,
\begin{equation*}
  \begin{split}
    |N_{a}(z)|
    &\leq
    \norm{\omega}_{\infty} \norm{\phi}_{\infty}
    \int_{|u|=1 \atop \rho_z(u)>0} \dd u
    \int_{\sigma_z(u)}^{\rho_z(u)} r^{a-1} \dd r \\
    &\leq
    \norm{\omega}_{\infty} \norm{\phi}_{\infty}
    \int_{|u|=1 \atop \rho_z(u)>0} \dd u
    \int_{0}^{R} r^{a-1} \dd r \\
    &\leq
    \frac{n\omega_n}{a}
    \norm{\omega}_{\infty} \norm{\phi}_{\infty} R^a,
  \end{split}
\end{equation*}
which is just the inequality \eqref{eq:106}.
\end{proof}

To prove H\"older continuity, we will employ the following simple but useful inequality as below.

\begin{lemma}\label{lem011}
For any $\beta\in\R$, there exists a positive constant $C_{\beta}$ depending
only on $\beta$, such that for any $0<\gamma\leq1$, we have
\begin{equation} \label{eq:65}
  |s^{\beta}-t^{\beta}|\leq
  C_{\beta} \max\set{s^{\beta-\gamma},t^{\beta-\gamma}} |s-t|^{\gamma},
  \quad
  \forall s,t>0.
\end{equation}
\end{lemma}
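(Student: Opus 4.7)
The plan is to reduce to the case $s \geq t > 0$ by symmetry, set $d = s - t \geq 0$, and split the analysis into two regimes depending on whether $d$ is dominated by $t$ or not. This is a standard "near-diagonal vs. far-from-diagonal" dichotomy for power functions, and the only content of the lemma is to assemble the right inequality in each regime.

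In the near-diagonal regime $d \leq t$, the plan is to apply the mean value theorem to the smooth function $x \mapsto x^{\beta}$ on the interval $[t,s]$: there exists $\xi \in [t,s] \subset [t,2t]$ with
\[
  s^{\beta} - t^{\beta} = \beta \,\xi^{\beta-1}(s-t).
\]
Since $\xi$ and $t$ differ by at most a factor of $2$, one has $\xi^{\beta-1} \leq c_{\beta} t^{\beta-1}$ for a constant depending only on $\beta$ (via $\max\{1,2^{\beta-1}\}$). Combined with the elementary inequality $|s-t| = |s-t|^{\gamma} |s-t|^{1-\gamma} \leq |s-t|^{\gamma} t^{1-\gamma}$, valid because $d \leq t$ and $1-\gamma \geq 0$, this yields
\[
  |s^{\beta} - t^{\beta}| \leq |\beta|\, c_{\beta}\, t^{\beta-\gamma}\, |s-t|^{\gamma},
\]
which is stronger than the asserted bound.

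In the far-from-diagonal regime $d > t$, we have $s > 2t$, so $d > s/2$, and the plan is to bound $|s^{\beta}-t^{\beta}|$ directly rather than via the derivative. If $\beta \geq 0$, then $|s^{\beta}-t^{\beta}| = s^{\beta} - t^{\beta} \leq s^{\beta} = s^{\beta-\gamma} s^{\gamma} \leq 2^{\gamma} s^{\beta-\gamma} d^{\gamma}$, using $s < 2d$. If $\beta < 0$, then $|s^{\beta}-t^{\beta}| = t^{\beta} - s^{\beta} \leq t^{\beta} = t^{\beta-\gamma} t^{\gamma} \leq t^{\beta-\gamma} d^{\gamma}$, using $t < d$. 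In both cases the right-hand side is dominated by $C \max\{s^{\beta-\gamma},t^{\beta-\gamma}\}|s-t|^{\gamma}$.

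I do not expect any real obstacle here; the lemma is essentially a bookkeeping exercise in calculus, and the only thing to track carefully is that the implicit constant depends only on $\beta$ (not on $\gamma \in (0,1]$ nor on $s,t$). Taking $C_{\beta}$ to be the maximum of the constants produced in the two regimes finishes the proof.
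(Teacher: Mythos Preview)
Your proof is correct and follows essentially the same approach as the paper's: both reduce by symmetry to $s\geq t$, split into the near-diagonal regime ($s\leq 2t$, equivalently the paper's $r\geq 1/2$) where the mean value theorem gives a Lipschitz bound, and the far-from-diagonal regime where a crude estimate on $|s^{\beta}-t^{\beta}|$ suffices. The only cosmetic difference is that the paper first substitutes $t=rs$ and works with the single variable $r$, whereas you argue directly in $s,t$; your far-regime case split on the sign of $\beta$ is a minor variant of the paper's split on the sign of $\gamma-\beta$.
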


\begin{proof}{}
Without loss of generality, we only need to prove inequality \eqref{eq:65} for
$0<t<s$.
Writing $t=rs$ with $0<r<1$, it is equivalent to
\begin{equation*}
  |1-r^{\beta}|\leq C_{\beta} \max\set{1,r^{\beta-\gamma}} (1-r)^{\gamma},
  \quad
  \forall\, r\in(0,1),
\end{equation*}
namely
\begin{equation} \label{eq:66}
  \min\set{1,r^{\gamma-\beta}}
  |1-r^{\beta}|
  (1-r)^{-\gamma}
  \leq C_{\beta},
  \quad
  \forall\, r\in(0,1).
\end{equation}
Denote the left hand side of \eqref{eq:66} by $\varphi(r)$.
One should verify
\begin{equation} \label{eq:67}
\varphi(r)\leq C_{\beta},
\quad
\forall\,r\in(0,1).
\end{equation}

Since $r^\beta$ is smooth in the closed interval $[1/2,1]$, there exists a
positive constant $c_\beta$ such that
\begin{equation*}
|1-r^{\beta}|\leq c_{\beta} (1-r),
\quad
\forall\, r\in[\tfrac{1}{2},1].
\end{equation*}
Therefore, we have
\begin{equation} \label{eq:77}
  \begin{split}
    \varphi(r)
    &\leq
    |1-r^{\beta}|
    (1-r)^{-\gamma} \\
    &\leq
    c_{\beta} (1-r)^{1-\gamma} \\
    & \leq
    c_{\beta},
    \quad
    \forall\, r\in[\tfrac{1}{2},1).
  \end{split}
\end{equation}
For $r\in(0,\tfrac{1}{2})$, there is $(1-r)^{-\gamma}<2^{\gamma}\leq2$, implying
\begin{equation} \label{eq:82}
  \varphi(r)
  \leq
  2 \min\set{r^{\gamma-\beta},1}
  |1-r^{\beta}|,
  \quad
  \forall\,r\in(0,\tfrac{1}{2}).
\end{equation}
When $\gamma-\beta\geq0$, we have $r^{\gamma-\beta}\leq1$.
Then, \eqref{eq:82} becomes into
\begin{equation*}
  \varphi(r)
  \leq
  2r^{\gamma-\beta}
  |1-r^{\beta}|
  =
  2|r^{\gamma-\beta}-r^{\gamma}|
  \leq 4.
\end{equation*}
When $\gamma-\beta<0$, we have $\beta>0$ and $r^{\gamma-\beta}>1$.
Now, \eqref{eq:82} reads
\begin{equation*}
  \varphi(r)
  \leq
  2 |1-r^{\beta}|
  \leq 4.
\end{equation*}
Thus
\begin{equation} \label{eq:87}
  \varphi(r)\leq4,
  \quad
  \forall\,r\in(0,\tfrac{1}{2}).
\end{equation}
Now combining \eqref{eq:77} with \eqref{eq:87}, we obtain
\eqref{eq:67} if $C_{\beta}$ is chosen as $\max\set{c_{\beta},4}$.
The proof of this lemma is completed.
\end{proof}

Based on previous Lemmas \ref{lem1128} and \ref{lem011}, we are  in a position to establish the local H\"older continuity of the Riesz potential.

\begin{lemma}\label{lem601}
Let $a>0$.
Assume $\phi\in L^{\infty}(\Omega)$ and $\omega\in C^{\alpha}(\uS)$
where $\alpha\in[0,1]$ and $\alpha<a$.
Then for any compact set $\Omega'\supset\Omega$,
$N_a\in C^{\alpha}(\Omega')$,
and there exists a positive constant $C$ depending only on $n$ and $a$, such that
\begin{equation} \label{Na-C-alpha}
  \norm{N_a}_{\alpha;\Omega'}
  \leq
  C
  \norm{\phi}_{\infty}
  \norm{\omega}_{\alpha}
  \left(
    \frac{1}{a} \DIAM(\Omega')^{a}
    +
    \frac{1}{a-\alpha} \DIAM(\Omega')^{a-\alpha}
  \right).
\end{equation}
\end{lemma}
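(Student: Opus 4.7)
The $C^0$ part of the bound follows directly from Lemma \ref{lem1128} with $R = \DIAM(\Omega')$, so the plan is to focus on the H\"older seminorm. I would fix $z_1, z_2 \in \Omega'$, set $\delta = |z_1 - z_2|$, and split the integral expressing $N_a(z_1) - N_a(z_2)$ into a \emph{near region} $\Omega \cap (B_{2\delta}(z_1) \cup B_{2\delta}(z_2))$ and the complementary \emph{far region} in $\Omega$. On the far region the triangle inequality yields $\tfrac{1}{2}|y-z_1| \leq |y-z_2| \leq \tfrac{3}{2}|y-z_1|$, so the two distances are comparable with universal constants; this comparability is the whole point of the threshold $2\delta$.

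On the near region, I would bound each of the two summands of the integrand separately by the $L^\infty$ norms of $\omega$ and $\phi$. Since $a > 0$, the factor $|y-z_i|^{a-n}$ is integrable at $y = z_i$, and a calculation analogous to Lemma \ref{lem1128} gives a total contribution of order $\norm{\omega}_\infty\norm{\phi}_\infty\delta^a$, which is at most $\delta^\alpha\DIAM(\Omega')^{a-\alpha}$ provided $\alpha \leq a$ and $\delta \leq \DIAM(\Omega')$ (the case $\delta > \DIAM(\Omega')$ being absorbed trivially into the $C^0$ bound).

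On the far region I would write $u_i = (y-z_i)/|y-z_i|$ and use the decomposition
\begin{equation*}
  \omega(u_1)|y-z_1|^{a-n} - \omega(u_2)|y-z_2|^{a-n}
  = \bigl(\omega(u_1)-\omega(u_2)\bigr)|y-z_1|^{a-n}
  + \omega(u_2)\bigl(|y-z_1|^{a-n}-|y-z_2|^{a-n}\bigr).
\end{equation*}
For the first summand, the elementary bound $|u_1-u_2| \leq 2\delta/\min(|y-z_1|,|y-z_2|)$ combined with the $C^\alpha$ regularity of $\omega$ controls the integrand by $C[\omega]_\alpha\norm{\phi}_\infty \delta^\alpha|y-z_1|^{a-n-\alpha}$. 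For the second summand, Lemma \ref{lem011} applied with $\beta = a-n$ and $\gamma = \alpha$, together with $\bigl||y-z_1|-|y-z_2|\bigr| \leq \delta$ and the comparability noted above, gives a bound of the same form with $[\omega]_\alpha$ replaced by $\norm{\omega}_\infty$. Integrating $|y-z_1|^{a-n-\alpha}$ over the far region (a subset of $B_R(z_1)\setminus B_{2\delta}(z_1)$ with $R = \DIAM(\Omega')$) is bounded by $n\omega_n R^{a-\alpha}/(a-\alpha)$, finite precisely because $a - \alpha > 0$.

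The chief technical delicacy is the role of the strict inequality $\alpha < a$: it is what makes the integral of $|y-z_1|^{a-n-\alpha}$ over the far region finite with the explicit coefficient $1/(a-\alpha)$ appearing in \eqref{Na-C-alpha}, and it is also what allows the near-region estimate $\delta^a$ to be absorbed into $\delta^\alpha$. The comparability on the far region is what lets the $\max$ arising from Lemma \ref{lem011} be consolidated into a single power of $|y-z_1|$, independent of the sign of $a-n-\alpha$. Summing the near and far contributions and combining with Lemma \ref{lem1128} produces \eqref{Na-C-alpha}.
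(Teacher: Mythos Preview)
Your argument is correct, but it takes a different route from the paper's. You use the classical near/far splitting at radius $2\delta$, whereas the paper integrates directly over all of $\Omega$ with no splitting at all. The paper's point is that both of your far-region estimates are actually valid globally: Lemma~\ref{lem011} with $\gamma=\alpha$ gives $\bigl||y-z_1|^{a-n}-|y-z_2|^{a-n}\bigr|\le C\bigl(|y-z_1|^{a-n-\alpha}+|y-z_2|^{a-n-\alpha}\bigr)\,|z_1-z_2|^{\alpha}$ for \emph{every} $y$, and the direction bound $|u_1-u_2|\le 2|z_1-z_2|/|y-z_2|$ likewise holds for every $y\neq z_1,z_2$. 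After applying these, the integrand is dominated by $|y-z_i|^{a-n-\alpha}$, which is integrable near $z_i$ precisely because $a-\alpha>0$; this absorbs what you call the near region automatically. So your near-region step is redundant, and this is exactly why Lemma~\ref{lem011} is stated for arbitrary $\gamma\in(0,1]$ rather than just $\gamma=1$. Both approaches yield the same $1/(a-\alpha)$ coefficient; the paper's is just shorter.

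One small omission: the case $\alpha=0$ requires showing $N_a$ is continuous (not merely bounded), which the paper handles separately at the end by dominated convergence; your write-up does not address this.
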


\begin{proof}{}
We first consider the case for $0<\alpha\leq1$.
Recalling the definition of $N_a$, for any $z_1,z_2\in\Omega'$, we have
\begin{equation}\label{eq:86}
  \begin{split}
    |N_a(z_1)&-N_a(z_2)| \\
    &=
    \biggl|
    \int_{\Omega} \phi(y)\omega\Bigl( \frac{y-z_1}{|y-z_1|} \Bigr) |y-z_1|^{a-n} \dd y
    -\int_{\Omega} \phi(y)\omega\Bigl( \frac{y-z_2}{|y-z_2|} \Bigr) |y-z_2|^{a-n} \dd y
    \biggr|
    \\
    &\leq
    \int_{\Omega}
    |\phi(y)|
    \cdot
    \Bigl| \omega\Bigl( \frac{y-z_1}{|y-z_1|} \Bigr) \Bigr|
    \cdot
    \bigl| |y-z_1|^{a-n} - |y-z_2|^{a-n} \bigr|
    \dd y
    \\
    &\hskip4.9em
    +\int_{\Omega}
    |\phi(y)| \cdot |y-z_2|^{a-n}
    \cdot
    \Bigl|
    \omega\Bigl( \frac{y-z_1}{|y-z_1|} \Bigr)
    -\omega\Bigl( \frac{y-z_2}{|y-z_2|} \Bigr)
    \Bigr|
    \dd y
    \\
    &=: I +\II.
  \end{split}
\end{equation}

For the first term $I$, by virtue of Lemma \ref{lem011}, there is
\begin{equation*}
  \begin{split}
    \bigl| |y-z_1|^{a-n}-|y-z_2|^{a-n} \bigr|
    &\leq
    C_1 \left(|y-z_1|^{a-n-\alpha}+|y-z_2|^{a-n-\alpha}\right)
    \bigl| |y-z_1|-|y-z_2| \bigr|^{\alpha} \\
    &\leq
    C_1 \left(|y-z_1|^{a-n-\alpha}+|y-z_2|^{a-n-\alpha}\right)
    |z_1-z_2|^{\alpha},
  \end{split}
\end{equation*}
where $C_1$ is a positive constant depending only on $a-n$.
Thus,
\begin{equation} \label{eq:88}
  \begin{split}
    I
    &\leq
    \norm{\phi}_{\infty}
    \norm{\omega}_{\infty}
    C_1
    |z_1-z_2|^{\alpha}
    \int_{\Omega}
    \left(|y-z_1|^{a-n-\alpha}+|y-z_2|^{a-n-\alpha}\right)
    \dd y
    \\
    &\leq
    \norm{\phi}_{\infty}
    \norm{\omega}_{\infty}
    C_1
    |z_1-z_2|^{\alpha}
    \cdot
    \frac{2n\omega_n}{a-\alpha}
    \DIAM(\Omega')^{a-\alpha},
  \end{split}
\end{equation}
where the second inequality follows from estimation \eqref{Na-C0} with $a$ replaced by $a-\alpha$.

For the second item $\II$, we note that for any non-zero vectors $y_1,y_2\in\R^n$, there holds
\begin{equation}\label{eq:85}
  \begin{split}
    \left|
      \frac{y_1}{|y_1|} - \frac{y_2}{|y_2|}
    \right|
    &\leq
    \left|
      \frac{y_1}{|y_1|} - \frac{y_1}{|y_2|}
    \right|
    +
    \left|
      \frac{y_1}{|y_2|} - \frac{y_2}{|y_2|}
    \right|
    \\
    &=
    \frac{1}{|y_2|}
    \bigl| |y_2| - |y_1| \bigr|
    +
    \frac{1}{|y_2|}
    \left| y_1-y_2 \right|
    \\
    &\leq
    \frac{2}{|y_2|}
    \left| y_1-y_2 \right|.
  \end{split}
\end{equation}
Then, recalling $\omega\in C^{\alpha}(\uS)$, we have the estimates:
\begin{equation*}
  \begin{split}
    \left|
      \omega\Bigl( \frac{y-z_1}{|y-z_1|} \Bigr)
      -\omega\Bigl( \frac{y-z_2}{|y-z_2|} \Bigr)
    \right|
    & \leq
    \left|
      \frac{y-z_1}{|y-z_1|} - \frac{y-z_2}{|y-z_2|}
    \right|^{\alpha}
    \cdot
    [\omega]_{\alpha}
    \\
    &\leq
    \frac{2 [\omega]_{\alpha}}{|y-z_2|^\alpha}
    \cdot
    \left| z_1-z_2 \right|^{\alpha}.
  \end{split}
\end{equation*}
Now, we find
\begin{equation} \label{eq:89}
  \begin{split}
    \II
    &\leq
    \norm{\phi}_{\infty}
    \int_{\Omega}
    |y-z_2|^{a-n}
    \cdot
    \Bigl|
    \omega\Bigl( \frac{y-z_1}{|y-z_1|} \Bigr)
    -\omega\Bigl( \frac{y-z_2}{|y-z_2|} \Bigr)
    \Bigr|
    \dd y
    \\
    &\leq
    2\norm{\phi}_{\infty}
    [\omega]_{\alpha}
    \left| z_1-z_2 \right|^{\alpha}
    \int_{\Omega}
    |y-z_2|^{a-n-\alpha}
    \dd y
    \\
    &\leq
    \norm{\phi}_{\infty}
    [\omega]_{\alpha}
    \left| z_1-z_2 \right|^{\alpha}
    \cdot
    \frac{2n\omega_n}{a-\alpha}
    \DIAM(\Omega')^{a-\alpha},
  \end{split}
\end{equation}
where the last inequality follows from estimation \eqref{Na-C0} with $a$ replaced by $a-\alpha$.

Combining \eqref{eq:86}, \eqref{eq:88} and \eqref{eq:89}, we have
\begin{equation}\label{eq:109}
  |N_a(z_1)-N_a(z_2)|
  \leq
  \frac{C_2}{a-\alpha}
  \norm{\phi}_{\infty}
  \norm{\omega}_{\alpha}
  \DIAM(\Omega')^{a-\alpha}
  \left| z_1-z_2 \right|^{\alpha},
\end{equation}
where $C_2$ is a positive constant depending only on $n$ and $a$.
Therefore, we can  obtain
\begin{equation} \label{Na-semi}
  [N_a]_{\alpha;\Omega'}
  \leq
  \frac{C_2}{a-\alpha}
  \norm{\phi}_{\infty}
  \norm{\omega}_{\alpha}
  \DIAM(\Omega')^{a-\alpha},
\end{equation}
which together with the $C^0$ estimation \eqref{Na-C0} to imply \eqref{Na-C-alpha} for $0<\alpha\leq1$.

Based on \eqref{Na-C-alpha}, for some $0<\alpha_0\leq1$ and $\alpha_0<a$, we see that
$\displaystyle\int_{\Omega} |y-z|^{a-n} \dd y \in C^{\alpha_0}(\Omega')$, this implies
\begin{equation}\label{eq:91}
  \lim_{z\to\tilde{z}}
  \int_{\Omega} |y-z|^{a-n} \dd y
  =
  \int_{\Omega} |y-\tilde{z}|^{a-n} \dd y.
\end{equation}
For the remaining case when $\alpha=0$, observing
\begin{equation*}
  |N_a(z)|
  \leq
  \norm{\phi}_{\infty}
  \norm{\omega}_{0}
  \int_{\Omega} |y-z|^{a-n} \dd y,
\end{equation*}
by virtue of the dominated convergence theorem and \eqref{eq:91}, then we conclude that $N_a(z)$ is
continuous on $\Omega'$. The estimate \eqref{Na-C-alpha} for $\alpha=0$ is
exactly the $C^0$ estimate \eqref{Na-C0}.
The proof of this lemma is completed.
\end{proof}
Next, our aim is to establish the higher regularity of $N_a$ for $a>1$.

\subsection{Higher regularity of $N_a$ for $a>1$}

Note that if the differentiation of $N_a$ can be taken under the integral sign,
with the help of Lemma \ref{lem601},
one will clearly see that $N_a\in C^k$ with $k$ the greatest integer less than $a$.
We first verify this observation.

For notational convenience, we adopt the following notations.
Let
\begin{gather*}
  A, A_1, A_2, \cdots \in \set{1, 2, \cdots, n}, \\
  i, i_1, i_2, \cdots,
  \tilde{\alpha}, \alpha_1, \alpha_2, \cdots
  \in \set{1, 2, \cdots, n-1}
\end{gather*}
be summation indices,
$\set{E_1, \cdots, E_n}$ be a fixed orthonormal frame in $\R^n$, and
$\delbar$ the differentiation in $\R^n$ as before.
For $z\in\pd\Omega$ and $\varsigma\in\uS$, we can write
\begin{equation*}
  z=z^AE_A
  \quad \text{ and } \quad
  \varsigma=\varsigma^AE_A.
\end{equation*}
Note that the functions $z^A$ and $\varsigma^A$ are  defined on $\pd\Omega$ and $\uS$
respectively.
Let $\set{z_1, \cdots, z_{n-1}}$ be a local frame on $\pd\Omega$,
$\set{e_1, \cdots, e_{n-1}}$ be a local orthonormal frame on $\uS$,
and $\nabla$ be both connections on $\pd\Omega$ and $\uS$.

For a positive integer $k$ and any function $\omega$ defined on $\uS$, we will
use $\pd^{k}\omega$ in the following sense:
\begin{equation*}
  \pd^{k}\omega \   \text{ denotes every } \   \pd^k_{\alpha_k\cdots \alpha_2 \alpha_1}\omega,
\end{equation*}
based on that, let
\begin{equation*}
  \pd^{\leq k}\omega \  \text{ be every } \  \omega, \pd^1\omega, \cdots, \pd^k\omega.
\end{equation*}
Similarly, for any function $z^A$ defined on $\pd\Omega$, let
$\nabla^kz^A$ be its every covariant derivative of order $k$ with respect to
the local frame $\set{z_1, \cdots, z_{n-1}}$, namely
\begin{equation*}
  \nabla^{k}z^A \   \text{ denotes every } \   z^A_{i_1 i_2 \cdots i_k}.
\end{equation*}
Then we have the corresponding notation $\nabla^{\leq k}z^A$.

Finally, for a positive integer $k$, we introduce the following index sets:
\begin{equation*}
  I_m^k= \set{
    (p_1, p_2, \cdots, p_m) :
    p_1+p_2+\cdots+p_m=k
  },
  \quad
  m=1, 2, \cdots, k,
\end{equation*}
where $p_1, p_2, \cdots, p_m$ are positive integers.

Building upon the above preparations, we now show an explicit expression for the $k$-th derivative of
 $N_a$ regarding the boundary of domain.

\begin{lemma}\label{lemNak}
Let $a>1$
and $k$ be a positive integer with $k<a$.
Assume
$\phi\in L^{\infty}(\Omega)$,
$\omega\in C^k(\uS)$,
and $\rho\in C^k(\uS)$.
Then $N_a\in C^k(\pd\Omega)$, and we have
\begin{equation}\label{dNa}
  \nabla^k N_a(z)
  =
  \sum_{m=1}^k
  \sum_{(p_1,\cdots,p_m)\in I^k_m}
  (\nabla^{p_1}z^{A_1})
  \cdots
  (\nabla^{p_m}z^{A_m})
  \widetilde{N}_{m;A_1 \cdots A_m}(z),
\end{equation}
where
\begin{equation} \label{eq:8}
  \widetilde{N}_{m;A_1 \cdots A_m}(z)
  = \int_{\Omega}
  P_{m+1}(\pd^{\leq m}\omega, \pd^{\leq m}\varsigma^A)\Bigl( \frac{y-z}{|y-z|} \Bigr)
  \frac{\phi(y)}{|y-z|^{n-(a-m)}} \dd y.
\end{equation}
Here $P_{m+1}$ $(\text{short for } P_{m+1;A_1 \cdots A_m})$ is an $(m+1)$-th
degree polynomial of variables taken in the set
$\set{\pd^{\leq m}\omega, \pd^{\leq m}\varsigma^A}$.
\end{lemma}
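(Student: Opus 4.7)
The plan is induction on $k$, with the key mechanism being differentiation under the integral sign, valid precisely because of the hypothesis $k<a$. The underlying observation is that each application of $\partial_{z^A}$ to the integrand of $N_a$ lowers the exponent on $|y-z|$ by exactly one: directly, since $\partial_{z^A}|y-z|^{a-n} = (n-a)\varsigma^A|y-z|^{a-n-1}$ with $\varsigma = (y-z)/|y-z|$; and indirectly through the chain rule for $\omega \circ \varsigma$, since $\partial_{z^A}\varsigma^B = |y-z|^{-1}(\varsigma^A\varsigma^B - \delta^B_A)$ brings down an extra factor of $|y-z|^{-1}$ whenever any polynomial in $\varsigma$ or a derivative of $\omega$ is differentiated. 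Consequently, after $m$ such differentiations the integrand has homogeneity $|y-z|^{(a-m)-n}$, which is integrable on $\Omega$ whenever $a>m$.

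For the base case $k=1$, I would compute directly
\[
\partial_{z^A} N_a(z)
= \int_\Omega \Bigl[ (n-a)\omega(\varsigma)\varsigma^A
+ (\varsigma^A\varsigma^B - \delta^B_A)(\partial_{\varsigma^B}\omega)(\varsigma) \Bigr]
\phi(y)|y-z|^{a-1-n}\,\dd y,
\]
justifying the interchange of derivative and integral by dominated convergence: the difference quotient of the integrand is, via Lemma \ref{lem011} and the smoothness of $\omega$ and $\varsigma$, dominated uniformly on a small ball by a constant multiple of $|y-z|^{a-1-n}$, which is integrable since $a>1$. Then the chain rule $\nabla_i N_a = z^A_i \partial_{z^A}N_a$ on $\pd\Omega$ produces \eqref{dNa} at order one, with $P_2$ a quadratic polynomial in $\varsigma$ and $\partial\omega$.

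For the induction step, assume \eqref{dNa} holds at order $k$ with $k+1<a$, and apply $\nabla_i$ to both sides using the Leibniz rule. Two types of contributions arise. First, if $\nabla_i$ hits a coefficient $\nabla^{p_j}z^{A_j}$, it becomes $\nabla^{p_j+1}z^{A_j}$; this preserves $m$ and contributes to the index set $I^{k+1}_m$. Second, if $\nabla_i$ hits $\widetilde{N}_{m;A_1\cdots A_m}(z)$, one factors $\nabla_i = (\nabla_i z^{A_{m+1}})\,\partial_{z^{A_{m+1}}}$ and differentiates under the integral sign (justified exactly as in the base case but with exponent $a-m-n$ in place of $a-n$, requiring $a-m>1$, which holds since $m+1\leq k+1<a$). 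The calculation in the first paragraph shows that $\partial_{z^{A_{m+1}}}$ acting on the integrand of $\widetilde{N}_m$ yields the integrand of $\widetilde{N}_{m+1;A_1\cdots A_{m+1}}$ with polynomial $P_{m+2}$ of one higher degree, contributing to the index set $I^{k+1}_{m+1}$ with one new entry equal to one. Collecting terms and relabeling indices recovers \eqref{dNa} at order $k+1$.

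The only analytic point requiring care is the dominated-convergence estimate used at each step. The bound on the difference quotient of $P_{m+1}(\varsigma)\phi(y)|y-z|^{a-m-n}$ in the $z$-variable is obtained uniformly on a small ball by applying inequality \eqref{eq:65} of Lemma \ref{lem011} to the factor $|y-z|^{a-m-n}$ and the mean value theorem (with \eqref{eq:85}) to the polynomial in $\varsigma$; the resulting integrable majorant is a constant multiple of $|y-z|^{a-(m+1)-n}$, integrable on $\Omega$ exactly because $m+1<a$. Continuity of $\nabla^k N_a$ then follows from Lemma \ref{lem601} applied to each $\widetilde{N}_{m;A_1\cdots A_m}$ with H\"older exponent zero, together with the continuity of the coefficients $\nabla^{p_j}z^{A_j}$ on $\pd\Omega$ guaranteed by $\rho\in C^k(\uS)$. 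I expect the main effort to lie not in any analytic obstacle but in the combinatorial bookkeeping of the polynomials $P_{m+1}$, which is tracked by induction on $m$ via the product and chain rules.
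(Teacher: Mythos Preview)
Your inductive scheme and the structural form of the derivatives are correct and mirror the paper's argument closely: the paper likewise proves \eqref{dNa} by induction, tracking exactly the two contributions you describe (Leibniz on the coefficients $\nabla^{p_j}z^{A_j}$, and one more $z$-derivative falling on $\widetilde{N}_m$ to produce $\widetilde{N}_{m+1}$).

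There is, however, a genuine gap in your justification for differentiating under the integral sign. You assert that the difference quotient of the integrand is dominated, uniformly for $z'$ in a small ball about $z$, by a constant multiple of $|y-z|^{a-(m+1)-n}$. This is false at points $y$ with $|y-z'|$ small compared to $|z'-z|$: there Lemma~\ref{lem011} only yields the bound $C\max\{|y-z|^{a-m-1-n},|y-z'|^{a-m-1-n}\}$, and the second term is not controlled by the first. In fact for $y$ near $z'$ the difference quotient behaves like $|y-z'|^{a-m-n}/|z'-z|$, which blows up, so no fixed integrable majorant handles the moving singularity and dominated convergence does not apply as stated.

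The paper avoids this by a displacement trick rather than dominated convergence. For $z\in\pd\Omega$ near a fixed $z_0$, set $N_{\epsilon}(z)=N_a(z+\epsilon\nu_0)$ with $\nu_0=\nu_{\Omega}(z_0)$. Since $z+\epsilon\nu_0$ stays at distance at least $\epsilon/2$ from $\Omega$, the integrand is smooth in $z$ and the classical Leibniz rule gives $\nabla_i N_{\epsilon}$ explicitly. Then Lemma~\ref{lem601} (applied with exponent $a-1>0$) shows the target integrals are continuous on a compact neighborhood, hence uniformly continuous, so both $N_{\epsilon}\rightrightarrows N_a$ and $\nabla_i N_{\epsilon}$ converge uniformly as $\epsilon\to0^+$; standard uniform-convergence of derivatives then yields $N_a\in C^1$ with the claimed formula. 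Your route can be repaired by splitting the integral into $\{|y-z|<2|z'-z|\}$ and its complement and estimating the small-ball piece directly as $O(|z'-z|^{a-m-1})\to0$, but the paper's argument is cleaner and uses only the continuity already established in Lemma~\ref{lem601}.
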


\begin{proof}
\textup{(1)}
We first use induction to prove \eqref{dNa} under the assumption that the
derivatives of $N_a$ can be taken under the integral sign.

For any fixed $y\in\Omega$, the composite function
$\varsigma^A\bigl( \frac{y-z}{|y-z|} \bigr)$
is a function defined on $\pd\Omega$.
In the following calculations, we abbreviate it as $\varsigma^A$ when no confusion arises.
By a direct computation, we derive
\begin{equation}\label{eq:10}
  \nabla_i |y-z|
  = -\Bigl\langle
  z_i,\frac{y-z}{|y-z|}
  \Bigr\rangle
  = -z^{A_1}_i \varsigma^{A_1},
\end{equation}
and
\begin{equation*}
  \nabla_i
  \left( \frac{y-z}{|y-z|} \right)
  =-|y-z|^{-1}
  \left(
    z_i
    -\Bigl\langle
    z_i,\frac{y-z}{|y-z|}
    \Bigr\rangle
    \frac{y-z}{|y-z|}
  \right).
\end{equation*}
Then, there holds
\begin{equation}\label{eq:9}
  \begin{split}
    \nabla_i \omega
    &= \nabla_i \left(
      \omega
      \Bigl( \frac{y-z}{|y-z|} \Bigr)
    \right) \\
    &= -|y-z|^{-1} \langle z_i,\varsigma_{\tilde{\alpha}} \rangle \pd_{\tilde{\alpha}} \omega \\
    &= -|y-z|^{-1} z^{A_1}_i \pd_{\tilde{\alpha}} \varsigma^{A_1} \cdot \pd_{\tilde{\alpha}} \omega.
  \end{split}
\end{equation}
In particular, we have
\begin{equation} \label{eq:11}
  \nabla_i \varsigma^A
  = \nabla_i \left(
    \varsigma^A\Bigl( \frac{y-z}{|y-z|} \Bigr)
  \right)
  = -|y-z|^{-1} z^{A_1}_i \pd_{\tilde{\alpha}} \varsigma^{A_1} \cdot \pd_{\tilde{\alpha}} \varsigma^A.
\end{equation}
Now recalling $N_a(z)$:
\begin{equation} \label{eq:12}
  N_{a}(z)
  =\int_{\Omega} \omega(\varsigma)|y-z|^{a-n} \phi(y) \dd y,
\end{equation}
and noting
\begin{equation*}
  \begin{split}
    \nabla_i ( \omega(\varsigma)|y-z|^{a-n} )
    &=
    -|y-z|^{a-1-n} z^{A_1}_i \pd_{\tilde{\alpha}} \varsigma^{A_1} \pd_{\tilde{\alpha}} \omega
    +(n-a) \omega |y-z|^{a-1-n} z^{A_1}_i \varsigma^{A_1} \\
    &=
    |y-z|^{a-1-n} z^{A_1}_i
    [(n-a) \omega  \varsigma^{A_1}
    - \pd_{\tilde{\alpha}} \varsigma^{A_1} \pd_{\tilde{\alpha}} \omega],
  \end{split}
\end{equation*}
thus we have
\begin{equation} \label{eq:13}
  \nabla_i N_{a}(z)
  = z^{A_1}_i
  \int_{\Omega}
  [(n-a) \omega  \varsigma^{A_1}
  - \pd_{\tilde{\alpha}} \varsigma^{A_1} \pd_{\tilde{\alpha}} \omega]
  \cdot
  |y-z|^{a-1-n}
  \phi(y) \dd y,
\end{equation}
which indicates that \eqref{dNa} is true for $k=1$.

Assuming \eqref{dNa} holds for $k$ with $k+1<a$, we need to show that it holds for $k+1$.
We start with doing the following differentiation:
\begin{equation} \label{eq:14}
  \begin{split}
    \nabla_i [ &
    (\nabla^{p_1}z^{A_1})
    \cdots
    (\nabla^{p_m}z^{A_m})
    \widetilde{N}_{m;A_1 \cdots A_m}(z) ] \\
    &=
    (\nabla^{p_1}z^{A_1})
    \cdots
    (\nabla^{p_\ell+1}z^{A_\ell})
    \cdots
    (\nabla^{p_m}z^{A_m})
    \widetilde{N}_{m;A_1 \cdots A_m}(z) \\
    &\hskip1.2em +
    (\nabla^{p_1}z^{A_1})
    \cdots
    (\nabla^{p_m}z^{A_m})
    \nabla_i [ \widetilde{N}_{m;A_1 \cdots A_m}(z) ].
  \end{split}
\end{equation}
Obviously, $(p_1, \cdots, p_\ell+1, \cdots, p_m)\in I^{k+1}_m$.
Therefore, the first term of the right hand side of \eqref{eq:14} is a summation
term of \eqref{dNa} with $k+1$.

It remains to handle the second term.
To compute $\nabla_i [ \widetilde{N}_{m;A_1 \cdots A_m}(z) ]$, we first carry out
the following computations.
By \eqref{eq:9}, we find
\begin{equation*}
  \begin{split}
    \nabla_i (\pd^{\leq m}\omega)
    &= -|y-z|^{-1} z^{A_1}_i \pd_{\tilde{\alpha}} \varsigma^{A_1} \cdot \pd_{\tilde{\alpha}} (\pd^{\leq m}\omega) \\
    &= -|y-z|^{-1} z^{A_1}_i \pd_{\tilde{\alpha}} \varsigma^{A_1} (\pd^{\leq m+1}\omega),
  \end{split}
\end{equation*}
which leads to
\begin{equation*}
  \begin{split}
    (\pd P_{m+1})(\pd^{\leq m}\omega, \pd^{\leq m}\varsigma^A) \cdot \nabla_i (\pd^{\leq m}\omega)
    &=
    P_{m}(\pd^{\leq m}\omega, \pd^{\leq m}\varsigma^A)
    \nabla_i (\pd^{\leq m}\omega) \\
    &=
    |y-z|^{-1} z^{A_1}_i
    P_{m}(\pd^{\leq m}\omega, \pd^{\leq m}\varsigma^A)
    \pd_{\tilde{\alpha}} \varsigma^{A_1} (\pd^{\leq m+1}\omega) \\
    &=
    |y-z|^{-1} z^{A_1}_i
    P_{m+2}(\pd^{\leq m+1}\omega, \pd^{\leq m}\varsigma^A).
  \end{split}
\end{equation*}
Here and in what follows, the polynomial $P$ may vary from line to line.
Particularly, we have
\begin{equation*}
  (\pd P_{m+1})(\pd^{\leq m}\omega, \pd^{\leq m}\varsigma^A) \cdot \nabla_i (\pd^{\leq m}\varsigma^A)
  =
  |y-z|^{-1} z^{A_1}_i
  P_{m+2}(\pd^{\leq m}\omega, \pd^{\leq m+1}\varsigma^A).
\end{equation*}
Then, one sees
\begin{equation*}
  \nabla_i [P_{m+1}(\pd^{\leq m}\omega, \pd^{\leq m}\varsigma^A) ]
  =
  |y-z|^{-1} z^{A_1}_i
  P_{m+2}(\pd^{\leq m+1}\omega, \pd^{\leq m+1}\varsigma^A),
\end{equation*}
which implies
\begin{multline} \label{eq:16}
  \nabla_i [ P_{m+1}(\pd^{\leq m}\omega, \pd^{\leq m}\varsigma^A) ] \cdot
  |y-z|^{(a-m)-n} \\
  =
  |y-z|^{a-(m+1)-n} z^{A_1}_i
  P_{m+2}(\pd^{\leq m+1}\omega, \pd^{\leq m+1}\varsigma^A).
\end{multline}
Since
\begin{multline*}
  P_{m+1}(\pd^{\leq m}\omega, \pd^{\leq m}\varsigma^A)
  \nabla_i |y-z|^{(a-m)-n} \\
  =
  (a-m-n)|y-z|^{a-(m+1)-n} \nabla_i |y-z|
  \cdot
  P_{m+1}(\pd^{\leq m}\omega, \pd^{\leq m}\varsigma^A),
\end{multline*}
which together with \eqref{eq:10}, yields
\begin{multline}\label{eq:18}
  P_{m+1}(\pd^{\leq m}\omega, \pd^{\leq m}\varsigma^A)
  \nabla_i |y-z|^{(a-m)-n} \\
  =
  |y-z|^{a-(m+1)-n}
  z^{A_1}_i
  P_{m+2}(\pd^{\leq m}\omega, \pd^{\leq m}\varsigma^A).
\end{multline}
Combining \eqref{eq:16} and \eqref{eq:18}, we obtain
\begin{equation*}
  \nabla_i [
  P_{m+1}(\pd^{\leq m}\omega, \pd^{\leq m}\varsigma^A)
  |y-z|^{(a-m)-n}
  ]
  =
  |y-z|^{a-(m+1)-n} z^{A_1}_i
  P_{m+2}(\pd^{\leq m+1}\omega, \pd^{\leq m+1}\varsigma^A).
\end{equation*}
Based on this formula,  from the expression \eqref{eq:8}, we have
\begin{equation}\label{eq:19}
  \begin{split}
    \nabla_i[\widetilde{N}_{m;A_1 \cdots A_m}(z)]
    &= \int_{\Omega}
    \nabla_i [
    P_{m+1}(\pd^{\leq m}\omega, \pd^{\leq m}\varsigma^A)
    |y-z|^{(a-m)-n}
    ]
    \phi(y) \dd y \\
    &= z^{A_1}_i \int_{\Omega}
    |y-z|^{a-(m+1)-n}
    P_{m+2}(\pd^{\leq m+1}\omega, \pd^{\leq m+1}\varsigma^A)
    \phi(y) \dd y \\
    &= z^{A_{m+1}}_i
    \widetilde{N}_{m+1;A_1 \cdots A_{m+1}}(z).
  \end{split}
\end{equation}
Now the second term of \eqref{eq:14} reads
\begin{multline}\label{eq:20}
  (\nabla^{p_1}z^{A_1})
  \cdots
  (\nabla^{p_m}z^{A_m})
  \nabla_i [ \widetilde{N}_{m;A_1 \cdots A_m}(z) ] \\
  =
  (\nabla^{p_1}z^{A_1})
  \cdots
  (\nabla^{p_m}z^{A_m})
  (\nabla_i^1 z^{A_{m+1}})
  \widetilde{N}_{m+1;A_1 \cdots A_{m+1}}(z).
\end{multline}
Apparently, $(p_1, \cdots, p_m,1)\in I^{k+1}_{m+1}$, which reveals that the second term
of the right hand side of \eqref{eq:14} is a summation term of \eqref{dNa} with
$k+1$.

We have demonstrated that
\begin{equation*}
  \nabla_i [
  (\nabla^{p_1}z^{A_1})
  \cdots
  (\nabla^{p_m}z^{A_m})
  \widetilde{N}_{m;A_1 \cdots A_m}(z) ]
\end{equation*}
is a summation term of \eqref{dNa} with $k+1$.
Thus, \eqref{dNa} holds for $k+1$.

\textup{(2)}
To complete the proof of this lemma,
it remains to prove that
the derivatives of $N_a$ can be taken under the integral sign.
Observing that the integrals appeared in \eqref{dNa} have the same form as
$N_a$, we only need to verify it for the first order derivatives of $N_a$.

Given any $z_0\in\pd\Omega$, write $\nu_0=\nu_{\ssOmega}(z_0)$.
Since $\rho\in C^1(\uS)$, there exists a small positive number $\delta$ such that
\begin{equation*}
  |\nu_{\ssOmega}(z)-\nu_0|<\frac{1}{2},
  \quad \forall |z-z_0|<\delta.
\end{equation*}
Let $\Gamma=\pd\Omega\cap B_{\delta}(z_0)$.
For any $\epsilon\in(0,1)$, consider
\begin{equation*}
  N_{\epsilon}(z)
  :=
  \int_{\Omega} \omega\Bigl( \frac{y-\epsilon\nu_0-z}{|y-\epsilon\nu_0-z|} \Bigr)
  \frac{\phi(y)}{|y-\epsilon\nu_0-z|^{n-a}} \dd y
  =N_a(\epsilon\nu_0+z),
  \quad
  \forall z\in\Gamma.
\end{equation*}
For any $y\in\Omega$ and $z\in\Gamma$, there is
\begin{equation*}
  \begin{split}
    |y-\epsilon\nu_0-z|
    &\geq
    |y-z-\epsilon\nu_{\ssOmega}(z)|
    - |\epsilon\nu_{\ssOmega}(z)-\epsilon\nu_0| \\
    &\geq
    \epsilon-\epsilon|\nu_{\ssOmega}(z)-\nu_0| \\
    &\geq
    \frac{\epsilon}{2}.
  \end{split}
\end{equation*}
Thus, one can apply the Leibniz integral rule to $N_{\epsilon}$.
Just like obtaining \eqref{eq:13}, we have
\begin{equation*}
  \nabla_i N_{\epsilon}(z)
  = z^{A_1}_i
  \int_{\Omega}
  [(n-a) \omega  \varsigma^{A_1}
  - \pd_{\tilde{\alpha}} \varsigma^{A_1} \pd_{\tilde{\alpha}} \omega]
  \Bigl(
  \frac{y-\epsilon\nu_0-z}{|y-\epsilon\nu_0-z|}
  \Bigr)
  \cdot
  |y-\epsilon\nu_0-z|^{a-1-n}
  \phi(y) \dd y.
\end{equation*}
By Lemma \ref{lem601}, the integrals appeared in the above expressions of $N_{\epsilon}$
and $\nabla_iN_{\epsilon}$ are continuous on $B_{\DIAM(\Omega)+1}(0)$, and
therefore uniformly continuous.
Hence, $N_{\epsilon}(z)\rightrightarrows N_a(z)$ on $\Gamma$ as $\epsilon\to0^+$, and
\begin{equation*}
  \nabla_i N_{\epsilon}(z)
  \rightrightarrows
  z^{A_1}_i
  \int_{\Omega}
  [(n-a) \omega  \varsigma^{A_1}
  - \pd_{\tilde{\alpha}} \varsigma^{A_1} \pd_{\tilde{\alpha}}\omega]
  \Bigl(
  \frac{y-z}{|y-z|}
  \Bigr)
  \cdot
  |y-z|^{a-1-n}
  \phi(y) \dd y,
\end{equation*}
illustrating that $N_a\in C^1(\Gamma)$, and
\begin{equation*}
  \nabla_i N_{a}(z)
  =
  z^{A_1}_i
  \int_{\Omega}
  [(n-a) \omega  \varsigma^{A_1}
  - \pd_{\tilde{\alpha}} \varsigma^{A_1} \pd_{\tilde{\alpha}} \omega]
  \Bigl(
  \frac{y-z}{|y-z|}
  \Bigr)
  \cdot
  |y-z|^{a-1-n}
  \phi(y) \dd y.
\end{equation*}
Recalling \eqref{eq:13} again, we see that the first order derivatives of $N_a$ can
indeed be taken under the integral sign.
The proof of this lemma is completed.
\end{proof}

\begin{remark}
In Lemma \ref{lemNak}, when $\omega\equiv1$,
$P_{m+1}(\pd^{\leq m}\omega, \pd^{\leq m}\varsigma^A)$ in \eqref{eq:8} will be simplified into
$P_m(\varsigma^A)$.
One can easily see this fact by carefully checking the proof.
For the case when $\omega\equiv1$ and $\phi\equiv1$, clear expressions for
$P_m(\varsigma^A)$ will be worked out for $k=2$ in Lemma \ref{lem007}.
\end{remark}

We remark that if $k\geq a$, the integral $\widetilde{N}_{k;A_1 \cdots A_k}(z)$
in \eqref{eq:8} of Lemma \ref{lemNak} will diverge for every $z\in\pd\Omega$,
which means that the differentiation formula \eqref{dNa} is invalid for $k\geq a$,
suggesting that $N_a$ is at most $C^{\lceil a \rceil-1 }$ smooth,
regardless of how smooth the boundary $\pd\Omega$ is.

However, the smoothness of $N_a$ can be higher than $\lceil a \rceil-1 $. To get it,
we need to derive another expression of $N_a$ as follows.

\begin{lemma}\label{lem012}
For $a>0$,
$N_a$ has the following expression:
\begin{equation}\label{NaB}
  N_{a}(z)
  =\int_{\pd\Omega} \omega\Bigl( \frac{y-z}{|y-z|} \Bigr)
  \frac{\langle y-z,\nu_y \rangle}{|y-z|^{n-a}}
  \phi_a(y,z)
  \dd y,
\end{equation}
where $\nu_y$ denotes the unit outer normal of $\pd\Omega$ at the boundary point
$y$, and $\phi_a$ is given by
\begin{equation}\label{phi-tilde}
  \phi_a(y,z)
  =
  \int_0^1 \phi(ty+(1-t)z) t^{a-1} \dd t.
\end{equation}
\end{lemma}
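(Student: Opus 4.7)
\medskip
\noindent\textbf{Proof plan for Lemma \ref{lem012}.}
The goal is to convert the volume integral defining $N_a(z)$ into a boundary integral on $\partial\Omega$. My plan is to use spherical coordinates centered at $z\in\partial\Omega$, then re-parametrize the radial variable to produce the auxiliary function $\phi_a$, and finally turn the resulting sphere integral into a boundary integral via the radial parametrization of $\partial\Omega$ from $z$.

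First I would follow the computation already used in the proof of Lemma \ref{lem1128} and write
\begin{equation*}
  N_a(z)
  =\int_{\{u\in\uS:\,\rho_z(u)>0\}} \omega(u) \dd u
  \int_{\sigma_z(u)}^{\rho_z(u)} \phi(z+ru)\, r^{a-1} \dd r.
\end{equation*}
Because $z\in\partial\Omega$ and $\Omega$ is convex, for a.e.\ $u\in\uS$ with $\rho_z(u)>0$ the ray $\{z+ru:r\geq 0\}$ enters $\Omega$ immediately, so $\sigma_z(u)=0$. Then the substitution $r=t\rho_z(u)$, $dr=\rho_z(u)\,dt$, gives
\begin{equation*}
  \int_{0}^{\rho_z(u)} \phi(z+ru)\,r^{a-1} \dd r
  =\rho_z(u)^{a}\int_0^1 \phi\bigl(z+t\rho_z(u)u\bigr)\,t^{a-1}\dd t
  =\rho_z(u)^{a}\,\phi_a(y,z),
\end{equation*}
where $y=z+\rho_z(u)u\in\partial\Omega$, since $z+t\rho_z(u)u=ty+(1-t)z$. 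This identifies exactly the function $\phi_a$ from \eqref{phi-tilde}.

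Next I would change variables from $u\in\uS$ to $y\in\partial\Omega$ via the radial map $u\mapsto y=z+\rho_z(u)u$ introduced in \eqref{radialMap} (translated so that $z$ plays the role of the origin). The classical Jacobian formula for this parametrization reads
\begin{equation*}
  \dd u
  =\frac{\langle y-z,\nu_y\rangle}{|y-z|^{n}}\,\dd \mathcal{H}^{n-1}(y),
\end{equation*}
which is just the statement that the surface element on $\partial\Omega$ is $|y-z|^{n-1}/\cos\theta$ times $du$, with $\cos\theta=\langle (y-z)/|y-z|,\nu_y\rangle$; this is valid for a.e.\ $y\in\partial\Omega$ since a convex body has a well-defined outer normal $\mathcal{H}^{n-1}$-a.e. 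Substituting $|y-z|=\rho_z(u)$ and $(y-z)/|y-z|=u$ and combining the two previous displays gives
\begin{equation*}
  N_a(z)
  =\int_{\partial\Omega}\omega\!\left(\tfrac{y-z}{|y-z|}\right)\,|y-z|^{a}\,\phi_a(y,z)\cdot
  \frac{\langle y-z,\nu_y\rangle}{|y-z|^{n}}\,\dd \mathcal{H}^{n-1}(y),
\end{equation*}
which simplifies to \eqref{NaB}.

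The only delicate point will be justifying the change-of-variables step and the a.e.\ vanishing of $\sigma_z$ at a boundary point, but both follow from standard facts about convex bodies: the radial map is a bi-Lipschitz homeomorphism between the set $\{u:\rho_z(u)>0\}\subset\uS$ and the ``visible'' portion of $\partial\Omega$ from $z$, and the outer normal exists $\mathcal{H}^{n-1}$-a.e. Integrability of the integrand in \eqref{NaB} follows afterwards from Lemma \ref{lem1128} applied to the original definition of $N_a$, so no separate integrability argument is needed.
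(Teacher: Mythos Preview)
Your plan is essentially the same as the paper's proof: spherical coordinates at $z$, the substitution $r=t\rho_z(u)$ to produce $\phi_a$, then the radial change of variables $u\mapsto y=z+\rho_z(u)u$ with the Jacobian $\dd u=\rho_z(u)^{1-n}\langle u,\nu_y\rangle\,\dd y$. The one step you gloss over is the passage from the image $\Gamma$ of the radial map (the ``visible'' part of $\partial\Omega$ from $z$, which you correctly identify) to the full boundary $\partial\Omega$ in the final integral. The paper closes this by checking that for any $y\in\partial\Omega\setminus\Gamma$ with $y\neq z$ one has $\langle y-z,\nu_y\rangle=0$, so the integrand vanishes there and the domain can be enlarged at no cost; you should add that sentence.
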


\begin{proof}{}
Recalling the expression \eqref{eq:90} with $\sigma_z(u)=0$ for $z\in\pd\Omega$,
and utilizing the variable substitution $r=t\rho_z(u)$, we have
\begin{equation*}
  \begin{split}
    \int_0^{\rho_z(u)} \phi(z+ru) r^{a-1} \dd r
    =
    \rho_z(u)^{a} \int_0^1 \phi(z+t\rho_z(u)u) t^{a-1} \dd t.
  \end{split}
\end{equation*}
Then
\begin{equation} \label{eq:92}
  N_{a}(z)
  =
  \int_{|u|=1 \atop \rho_z(u)>0}
  \omega(u)
  \rho_z(u)^{a}
  \dd u
  \int_0^1 \phi(z+t\rho_z(u)u) t^{a-1} \dd t.
\end{equation}

We continue to use another variable substitution
$y=z+\rho_z(u)u\in\pd\Omega$.
Since
\begin{equation*}
  \dd u =\rho_z(u)^{1-n}\langle u,\nu_y \rangle \dd y,
\end{equation*}
we have
\begin{equation*}
  N_{a}(z)
  =
  \int_{\Gamma}
  \omega(u)
  \rho_z(u)^{a+1-n}
  \langle u,\nu_y \rangle \dd y
  \int_0^1 \phi(z+t\rho_z(u)u) t^{a-1} \dd t,
\end{equation*}
where $\Gamma\subset\pd\Omega$ is the image set of $y$
on the set $\set{u\in\uS : \rho_z(u)>0}$.
Note that
\begin{align*}
  \rho_z(u)&=|y-z|, \\
  u&=\frac{y-z}{|y-z|}, \\
  z+t\rho_z(u)u&=ty+(1-t)z,
\end{align*}
the above $N_a$ tells
\begin{equation}\label{eq:93}
  N_{a}(z)
  =
  \int_{\Gamma}
  \omega\Bigl( \frac{y-z}{|y-z|} \Bigr)
  |y-z|^{a-n}
  \langle y-z,\nu_y \rangle \dd y
  \int_0^1 \phi(ty+(1-t)z) t^{a-1} \dd t.
\end{equation}

For any fixed $y\in\pd\Omega\backslash\Gamma$ and $y\neq z$, let
\begin{equation*}
  \tilde{u}=\frac{y-z}{|y-z|}
  \quad \text{ and } \quad
  \tilde{y}=z+\rho_z(\tilde{u})\tilde{u}.
\end{equation*}
Observe that $z+|y-z|\tilde{u}=y\in\pd\Omega$,
the definition of $\rho_{z}$ gives  $\rho_z(\tilde{u})\geq |y-z|>0$.
Furthermore, from the definition of $\Gamma$, we conclude  $\tilde{y}\in\Gamma$.
Therefore,
\begin{equation*}
  \rho_z(\tilde{u})>|y-z|.
\end{equation*}
Since $\langle \tilde{y}-y,\nu_y \rangle\leq0$, and
\begin{equation*}
  \tilde{y}-y
  =\left( \rho_z(\tilde{u})-|y-z| \right) \tilde{u},
\end{equation*}
we thus obtain  $\langle \tilde{u},\nu_y \rangle\leq0$,
namely $\langle y-z,\nu_y \rangle\leq0$.
However, since  $\nu_y$ is the outer normal, we have
$\langle z-y,\nu_y \rangle\leq0$. Thus there holds
\begin{equation}\label{eq:94}
  \langle y-z,\nu_y \rangle=0.
\end{equation}
Using \eqref{eq:94}, it yields
\begin{equation*}
  \int_{\pd\Omega\backslash\Gamma}
  \omega\Bigl( \frac{y-z}{|y-z|} \Bigr)
  |y-z|^{a-n}
  \langle y-z,\nu_y \rangle \dd y
  \int_0^1 \phi(ty+(1-t)z) t^{a-1} \dd t
  =0.
\end{equation*}
This integral together with \eqref{eq:93} illustrates
\begin{equation*}
  N_{a}(z)
  =
  \int_{\pd\Omega}
  \omega\Bigl( \frac{y-z}{|y-z|} \Bigr)
  |y-z|^{a-n}
  \langle y-z,\nu_y \rangle \dd y
  \int_0^1 \phi(ty+(1-t)z) t^{a-1} \dd t,
\end{equation*}
which is just the expression \eqref{NaB}.
\end{proof}

Utilizing the new representation \eqref{NaB} in Lemma \ref{lem012}, we can
obtain a new regularity result, which is independent of $a$.

\begin{lemma}\label{lem1205}
Let $a>0$, $k\geq0$, $0\leq\alpha\leq1$,
and $\rho\in C^{k+1,\alpha}(\uS)$.
Assume $\varphi\in C^{k,\alpha}(\uS\times\pd\Omega\times\pd\Omega)$ satisfies
\begin{equation}\label{eq:110}
  |\varphi(\varsigma,y,z)-\varphi(\tilde{\varsigma},y,z)|
  \leq c_{\varphi}|\varsigma-\tilde{\varsigma}|,
  \quad
  \forall\, \varsigma,\tilde{\varsigma}\in\uS, \, y,z\in\pd\Omega
\end{equation}
for a positive constant $c_{\varphi}$.
Then
\begin{equation}\label{Ta}
  \mathcal{T}_a(z)
  :=\int_{\pd\Omega}
  |y-z|^{1-n+a}
  \varphi\bigl( \tfrac{y-z}{|y-z|},y,z \bigr)
  \dd y
\end{equation}
is $C^{k,\alpha}(\pd\Omega)$, and
\begin{equation}\label{eq:127}
  \norm{\mathcal{T}_a}_{k,\alpha} \leq
  C(n, a, k, c_{\varphi}, \norm{\varphi}_{k,\alpha}, \norm{\rho}_{k+1,\alpha}, 1/{\rho_{\min}})
\end{equation}
for some positive constants $C$ depending only on these quantities listed above.
\end{lemma}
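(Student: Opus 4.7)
The plan is to use a spherical transformation that absorbs the singular kernel $|y-z|^{a+1-n}$ into a Jacobian factor. Concretely, for fixed $z\in\pd\Omega$, parametrize the portion of $\pd\Omega$ visible from $z$ by $y=z+\rho_z(u)u$, where $u$ ranges over the hemisphere $H_z:=\set{u\in\uS:\rho_z(u)>0}$ and $\rho_z$ is the radial function of $\Omega$ centered at $z$. The surface-measure identity $\dd y=\rho_z(u)^{n-1}/\langle u,\nu_y\rangle\dd u$ (with $\nu_y$ the outer unit normal at $y$) combines with $|y-z|^{1-n+a}=\rho_z(u)^{1-n+a}$ to give
\begin{equation*}
  \mathcal{T}_a(z) = \int_{H_z}
  \frac{\rho_z(u)^a}{\langle u,\nu_y\rangle}\,
  \varphi\bigl(u,\,z+\rho_z(u)u,\,z\bigr)\,\dd u,
\end{equation*}
eliminating the singular kernel entirely.

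Next, to obtain regularity in $z$, I would fix a global parametrization $z=\rho(v)v$ with $v\in\uS$, and rewrite the integral on a $v$-independent hemisphere via a further change of variables $u=\Psi_v(w)$, with $w$ on the standard hemisphere orthogonal to $\nu_{\ssOmega}(z(v))$. The hypothesis $\rho\in C^{k+1,\alpha}(\uS)$ ensures that $\Psi_v$, the composition $\rho_{z(v)}\circ\Psi_v$, and the unit normal $\nu_y$ are all $C^{k,\alpha}$ jointly in $v$ and $w$; the Lipschitz condition \eqref{eq:110} on $\varphi$'s direction slot controls its contribution when the first argument $u$ varies with $v$. Differentiating under the integral sign up to order $k$ and estimating the $\alpha$-Hölder modulus of the result via the ideas of Lemma \ref{lem1128} and Lemma \ref{lem601} (applied to the transformed integrand, which after the substitution has only an integrable Riesz-type profile) then completes an induction on $k$ and gives the bound $\norm{\mathcal{T}_a}_{k,\alpha}\leq C$ in the stated form.

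The main obstacle is the behavior at the horizon $\pd H_z$, where both $\rho_z(u)$ and $\langle u,\nu_y\rangle$ vanish, so the quotient $\rho_z^a/\langle u,\nu_y\rangle$ scales like $\epsilon^{a-1}$ (with $\epsilon$ the distance to the horizon) and is not smooth when $a\neq 1$; moreover, the integration domain $H_z$ itself moves with $z$. The remedy is a careful local analysis in tangent-plane coordinates of $\pd\Omega$ at $z$: using the $C^{k+1,\alpha}$ expansion of the boundary, one factors $\rho_z(u)=\epsilon\cdot g_1(u,z)$ and $\langle u,\nu_y\rangle=\epsilon\cdot g_2(u,z)$ with $g_1,g_2$ smooth and positive, so that $\rho_z^a/\langle u,\nu_y\rangle=\epsilon^{a-1}\cdot g_1^a/g_2$ splits into an integrable universal profile times a $C^{k,\alpha}$ function of $z$. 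After this factorization, the apparent obstruction disappears and the change of variables $u=\Psi_v(w)$ described above puts the integral into a form directly amenable to differentiation under the integral sign. The extra derivative of $\rho$ in the $C^{k+1,\alpha}$ hypothesis is exactly what is needed to control the $C^{k,\alpha}$ norms of $\nu_y$ and the surface Jacobian appearing in this decomposition.
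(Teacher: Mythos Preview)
Your approach differs from the paper's in a crucial way, and the difference creates a genuine gap.

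The paper parametrizes $\pd\Omega$ from the \emph{interior} origin, writing $y=\rho(u)u$ and $z=\rho(\bar z)\bar z$ with $\bar z=z/|z|$; the integration then runs over the full sphere $\uS$ and the only singularity is at the single point $u=\bar z$. Setting $\theta=\arccos\langle u,\bar z\rangle$ and $u=\bar z\cos\theta+v\sin\theta$ with $v\perp\bar z$, the paper factors
\[
y-z=\theta\,\xi(\bar z,v,\theta),\qquad
\xi=\int_0^1\nabla\vec r(\bar z\cos t\theta+v\sin t\theta)\,(v\cos t\theta-\bar z\sin t\theta)\,dt.
\]
Since $\nabla\vec r=\rho I_n+u\nabla\rho$, one has $\xi\in C^{k,\alpha}$ in $(\bar z,v,\theta)$ with $|\xi|$ bounded above and below uniformly (the lower bound $2\rho_{\min}/\pi$ comes straight from $\rho\ge\rho_{\min}$; no curvature hypothesis is needed). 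The integral becomes $\int_0^\pi\theta^{1-n+a}\sin^{n-2}\theta\,d\theta\int_{v\perp\bar z}g(\bar z,v,\theta)\,dv$ with $g$ uniformly $C^{k,\alpha}$ in $(\bar z,v)$, and the conclusion follows.

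Your parametrization from the boundary point $z$ introduces the horizon $\pd H_z$, and your fix---factoring $\rho_z(u)=\epsilon\, g_1$ and $\langle u,\nu_y\rangle=\epsilon\, g_2$ with $g_i$ positive and regular---fails in two ways. First, the lemma does not assume strict convexity: if $\pd\Omega$ has vanishing curvature at $z$ in some tangent direction $\omega$, then $\rho_z(u)$ does not vanish linearly (or at all, if there is a flat segment) as $u$ approaches that direction, while $\langle u,\nu_y\rangle$ still does, so $\rho_z^{\,a}/\langle u,\nu_y\rangle$ is not of the form $\epsilon^{a-1}\times(\text{regular})$. Second, even for uniformly convex bodies your decomposition loses a derivative: writing $\pd\Omega$ locally as a graph over its tangent plane at $z$, one finds $g_1|_{\epsilon=0}=2/\II_z(\omega,\omega)$, the reciprocal of the second fundamental form, and with $\rho\in C^{k+1,\alpha}$ this is only $C^{k-1,\alpha}$ in $z$. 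So your route would yield at best $\mathcal T_a\in C^{k-1,\alpha}$, not $C^{k,\alpha}$. The paper's interior-centered parametrization sidesteps both issues: the given radial function $\rho$ is used directly, no implicit equation for $\rho_z$ is solved, and the singularity collapses to a point with an explicit factorization whose regularity matches the hypothesis exactly.
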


\begin{proof}{}
To avoid potential integral divergence in the differentiation formula from
Lemma \ref{lemNak}, we need to develop another differentiation method.
The key is to find a good separation representation for the singular term $y-z$.

Using the radial map \eqref{radialMap}, one can write
\begin{equation} \label{tau}
  y-z
  = \vec{r}(u)-\vec{r}(\bar{z})
  = \rho(u)u-\rho(\bar{z})\bar{z}
  =:\tau,
\end{equation}
where $\bar{z}=\frac{z}{|z|}\in\uS$.
Let $\theta\in[0,\pi]$ be the angle between vectors $y$ and $z$, namely
\begin{equation}\label{eq:119}
  \theta=\arccos\langle u,\bar{z} \rangle.
\end{equation}
Note that $|u-\bar{z}|=\sqrt{2-2\cos\theta}=2\sin\frac{\theta}{2}$, there is
\begin{equation}\label{eq:118}
  \frac{2}{\pi}\theta\leq|u-\bar{z}|\leq\theta.
\end{equation}
Since $\rho\in C^1(\uS)$, we have
\begin{equation}\label{eq:154}
  \begin{split}
    |\tau|
    &\leq
    |\rho(u)u-\rho(u)\bar{z}|
    +
    |\rho(u)-\rho(\bar{z})|\cdot|\bar{z}|
    \\
    &\leq
    \rho(u)\theta+\theta\max|\nabla\rho|
    \\
    &\leq
    \norm{\rho}_1\theta.
  \end{split}
\end{equation}
On the other hand, one sees
\begin{equation*}
  \begin{split}
    |(\rho(u)+\rho(\bar{z}))(u-\bar{z})|
    &\leq
    |\rho(u)u-\rho(\bar{z})\bar{z}|
    +
    |\rho(\bar{z})u-\rho(u)\bar{z}|
    \\
    &=
    2|\rho(u)u-\rho(\bar{z})\bar{z}|,
  \end{split}
\end{equation*}
namely
\begin{equation} \label{eq:121}
  |\rho(u)u-\rho(\bar{z})\bar{z}|
  \geq
  \frac{\rho(u)+\rho(\bar{z})}{2}|u-\bar{z}|,
\end{equation}
which, in combination with \eqref{eq:118}, implies
\begin{equation*}
  |\tau|\geq \frac{2\rho_{\min}}{\pi}\theta.
\end{equation*}
We therefore derive the following key observation:
\begin{equation} \label{eq:123}
  \frac{2\rho_{\min}}{\pi}\theta
  \leq
  |\tau|
  \leq
  \norm{\rho}_1\theta.
\end{equation}

Now we can construct the desired separation representation of $\tau$.
For each $u$ that is not $\bar{z}$ or $-\bar{z}$, namely $\theta\in(0,\pi)$,
there exists a unique vector $v\in\uS$ such that $v\bot\bar{z}$, and
\begin{equation} \label{uvtheta}
  u= \bar{z} \cos\theta +v\sin\theta.
\end{equation}
Define a path on $\pd\Omega$ connecting $\vec{r}(\bar{z})$ and $\vec{r}(u)$ as
\begin{equation}\label{eq:3}
  P(t)=\vec{r}( \bar{z} \cos t\theta+v\sin t\theta),
  \quad t\in[0,1].
\end{equation}
Then
\begin{equation*}
  \begin{split}
    P'(t)
    &=\nabla\vec{r}( \bar{z} \cos t\theta+v\sin t\theta) ( \bar{z} \cos t\theta+v\sin t\theta)' \\
    &=\nabla\vec{r}( \bar{z} \cos t\theta+v\sin t\theta) (v\cos t\theta- \bar{z} \sin t\theta)\theta,
  \end{split}
\end{equation*}
which leads to
\begin{equation} \label{xi}
  \begin{split}
    \tau =\vec{r}(u)-\vec{r}( \bar{z} )
    &=P(1)-P(0) =\int_0^1 P'(t) \dd t \\
    &= \theta
    \int_0^1 \nabla\vec{r}( \bar{z} \cos t\theta+v\sin t\theta) (v\cos t\theta- \bar{z} \sin t\theta) \dd t \\
    &=: \theta\,
    \xi( \bar{z} ,v,\theta).
  \end{split}
\end{equation}
By the definition of $\vec{r}$ in \eqref{radialMap}, there is
\begin{equation}\label{eq:111}
  \nabla\vec{r}(u)=\rho(u)I_n+u\nabla\rho(u),
  \quad \forall\, u\in\uS.
\end{equation}
Then
\begin{equation*}
  \begin{split}
    \nabla\vec{r}(\bar{z} \cos t\theta+v\sin t\theta)
    &= \rho(\bar{z} \cos t\theta+v\sin t\theta) I_n \\
    &\hskip1.12em
    +(\bar{z} \cos t\theta+v\sin t\theta)
    \nabla\rho(\bar{z} \cos t\theta+v\sin t\theta),
  \end{split}
\end{equation*}
which together with $\rho\in C^{k+1,\alpha}(\uS)$ to imply that $\xi$ defined in
\eqref{xi} is $C^{k,\alpha}$ with respect to $(\bar{z},v,\theta)$.
Moreover, recalling inequality \eqref{eq:123}, it follows that
\begin{equation} \label{eq:124}
  0<
  \frac{2\rho_{\min}}{\pi}
  \leq
  |\xi(\bar{z},v,\theta)|
  \leq
  \norm{\rho}_1,
  \quad
  \forall\, \bar{z},v\in\uS, \ v\bot\bar{z}, \ \theta\in[0,\pi].
\end{equation}

Building upon the above separation form \eqref{xi} and applying some variable
substitutions, we will derive a new representation of $\mathcal{T}_a(z)$, which is
very convenient to take derivatives.
In fact, corresponding to \eqref{tau}, we first use the variable substitution
$y=\vec{r}(u)=\rho(u)u$. Then,
\begin{equation*}
  \mathcal{T}_a(\bar{z})
  :=
  \mathcal{T}_a(z)
  =\int_{\uS}
  |\tau|^{1-n+a}
  \varphi\bigl( \tfrac{\tau}{|\tau|},\vec{r}(u),\vec{r}(\bar{z}) \bigr)
  \rho(u)^{n-2} |\delbar\rho(u)| \dd u.
\end{equation*}
Now, for each fixed $\bar{z}$, \eqref{uvtheta} provides the polar coordinates
$(v,\theta)$ for almost any $u\in\uS$ except $\pm\bar{z}$.
By the following integral formula
\begin{equation*}
  \int_{\uS}\dd u =
  \int_0^{\pi} \sin^{n-2}\theta \dd\theta
  \int_{v\bot \bar{z} } \dd v,
\end{equation*}
and \eqref{xi}, it suffices to gain
\begin{equation}\label{eq:126}
  \mathcal{T}_a(\bar{z}) =
  \int_0^{\pi}
  \theta^{1-n+a}
  \sin^{n-2}\theta
  \dd\theta
  \int_{v\bot \bar{z}}
  g(\bar{z},v,\theta)
  \dd v,
\end{equation}
where
\begin{equation}\label{eq:125}
  g(\bar{z},v,\theta)=
  |\xi|^{1-n+a}
  \varphi\bigl( \tfrac{\xi}{|\xi|},\vec{r}(u),\vec{r}(\bar{z}) \bigr)
  \rho(u)^{n-2} |\delbar\rho(u)|.
\end{equation}
Under the assumptions on $\varphi$ and by estimate \eqref{eq:124}, it follows
that $g$ is $C^{k,\alpha}$ with respect to $(\bar{z},v)$ and their norms are
uniformly bounded for $\theta\in(0,\pi)$.
Hence, from \eqref{eq:126} with $a>0$, we conclude that
$\mathcal{T}_a(\bar{z})\in C^{k,\alpha}$ and its norm satisfies the estimate
\eqref{eq:127}. The proof of this lemma is now completed.
\end{proof}

Applying Lemma \ref{lem1205} into $N_a$ provided in Lemma \ref{lem012}, we
have the following lemma.

\begin{lemma}\label{lem1207}
Let $a>0$, $k\geq0$, $0\leq\alpha\leq1$.
Assume $\rho\in C^{k+1,\alpha}(\uS)$,
$\omega\in C^{k,\alpha}(\uS)\cap C^{0,1}(\uS)$, and
$\phi\in C^{k,\alpha}(\Omega)$.
Then $N_a$ is $C^{k,\alpha}(\pd\Omega)$, and
\begin{equation}\label{eq:128}
  \norm{N_a}_{k,\alpha} \leq
  C(n, a, k, \norm{\omega}_{k,\alpha}, \norm{\omega}_{0,1}, \norm{\phi}_{k,\alpha},
  \norm{\rho}_{k+1,\alpha}, 1/{\rho_{\min}})
\end{equation}
for some positive constants $C$ depending only on these quantities listed above.
\end{lemma}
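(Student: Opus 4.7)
The strategy is to reduce the lemma to Lemma \ref{lem1205} by recasting the boundary integral representation of $N_a$ given in Lemma \ref{lem012} into the template \eqref{Ta}. Writing $\varsigma=\frac{y-z}{|y-z|}$, I would pull one factor of $|y-z|$ out of the numerator $\langle y-z,\nu_y\rangle$ in \eqref{NaB}, obtaining
$$N_a(z) = \int_{\pd\Omega} |y-z|^{1-n+a}\, \varphi\!\left(\tfrac{y-z}{|y-z|},y,z\right) \dd y,$$
with the explicit choice
$$\varphi(\varsigma,y,z) := \omega(\varsigma)\,\langle \varsigma,\nu_y\rangle\,\phi_a(y,z).$$
This matches the hypothesis of Lemma \ref{lem1205} exactly, so the entire task collapses to verifying the two structural conditions on $\varphi$ required there: a global $C^{k,\alpha}$ regularity on $\uS\times\pd\Omega\times\pd\Omega$, and the uniform Lipschitz estimate \eqref{eq:110} in the first slot.

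For the $C^{k,\alpha}$ regularity I would check the three factors of $\varphi$ separately. The factor $\omega(\varsigma)$ is $C^{k,\alpha}$ in $\varsigma$ by assumption. Since $\rho\in C^{k+1,\alpha}(\uS)$, the hypersurface $\pd\Omega$ is $C^{k+1,\alpha}$ and its outer unit normal field $y\mapsto\nu_y$, which consumes one derivative of $\rho$, is $C^{k,\alpha}$ in $y$. For $\phi_a(y,z)=\int_0^1 \phi(ty+(1-t)z)\,t^{a-1}\dd t$ I would differentiate under the integral sign: a mixed derivative $\pd_y^{\beta_1}\pd_z^{\beta_2}$ of total order $\leq k$ produces
$$\int_0^1 (\pd^{\beta_1+\beta_2}\phi)(ty+(1-t)z)\,t^{a-1+|\beta_1|}(1-t)^{|\beta_2|} \dd t,$$
which converges for $a>0$ and is bounded by $\norm{\phi}_{k,\alpha}$ times a constant depending only on $a$; the same argument applied to H\"older increments of $\pd^{\beta_1+\beta_2}\phi$ delivers the full $C^{k,\alpha}$ estimate on $\phi_a$. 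Multiplying the three factors then gives $\varphi\in C^{k,\alpha}(\uS\times\pd\Omega\times\pd\Omega)$ with norm controlled by the stated data.

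For the Lipschitz condition \eqref{eq:110}, I would use the decomposition
$$|\omega(\varsigma)\langle\varsigma,\nu_y\rangle-\omega(\tilde\varsigma)\langle\tilde\varsigma,\nu_y\rangle| \leq \bigl(\norm{\omega}_{0,1}+\norm{\omega}_\infty\bigr)\,|\varsigma-\tilde\varsigma|,$$
which uses $\omega\in C^{0,1}(\uS)$ in an essential way, and then multiply by the $L^\infty$ bound on $\phi_a$. A direct invocation of Lemma \ref{lem1205} then yields $N_a\in C^{k,\alpha}(\pd\Omega)$ together with the norm bound \eqref{eq:128}, after tracking how the constant in \eqref{eq:127} inherits its dependence on $\norm{\omega}_{k,\alpha}$, $\norm{\omega}_{0,1}$, $\norm{\phi}_{k,\alpha}$, $\norm{\rho}_{k+1,\alpha}$, and $1/\rho_{\min}$.

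The main obstacle I anticipate is the bookkeeping for $\phi_a$: one must track how mixed derivatives in $(y,z)$ translate into extra $t$ and $(1-t)$ weights inside the integral, and confirm that none of the differentiations creates a non-integrable singularity at $t=0$. This is precisely the step where the hypothesis $a>0$ is used, so it warrants some care, but it is the only non-mechanical point; everything else is a straightforward composition of Lemmas \ref{lem012} and \ref{lem1205}.
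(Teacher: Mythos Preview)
Your proposal is correct and follows essentially the same approach as the paper: the paper's proof is a two-line invocation of Lemmas \ref{lem012} and \ref{lem1205} with the identical choice $\varphi(\varsigma,y,z)=\langle\varsigma,\nu_y\rangle\,\omega(\varsigma)\,\phi_a(y,z)$, leaving the verification of the hypotheses on $\varphi$ to the reader. Your added detail on the $C^{k,\alpha}$ regularity of $\phi_a$ and the Lipschitz bound in $\varsigma$ is exactly what is implicitly being asserted, and your concern about the $t^{a-1}$ weight is the right place to use $a>0$.
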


\begin{proof}{}
Let $\varphi(\varsigma,y,z)=\langle \varsigma,\nu_y \rangle \omega(\varsigma) \phi_a(y,z)$, then
$N_a(z)$ in \eqref{NaB} is just $\mathcal{T}_a(z)$ in \eqref{Ta}.
By the assumptions of this lemma, $\varphi$ satisfies the requirements of Lemma
\ref{lem1205}.
Now the conclusions of Lemma \ref{lem1205} yield this lemma.
\end{proof}

At this moment, we have established two different regularity results about $N_a$,
namely Lemmas \ref{lemNak} and \ref{lem1207}.
But neither of them is optimal.
Fortunately, by combining these two results, we can gain the following optimal
regularity for $N_a$ when $a>1$.

\begin{lemma}\label{lemNa>1}
Let $a>1$, $k\geq0$, $0\leq\alpha\leq1$.
Assume $\rho\in C^{k+1,\alpha}(\uS)$,
$\omega\in C^{k+1,\alpha}(\uS)\cap C^{1,1}(\uS)$, and
$\phi\in C^{k,\alpha}(\Omega)$.
Then $N_a$ is $C^{k+1,\alpha}(\pd\Omega)$, and
\begin{equation}\label{eq:17}
  \norm{N_a}_{k+1,\alpha} \leq
  C(n, a, k, \norm{\omega}_{k+1,\alpha}, \norm{\omega}_{1,1}, \norm{\phi}_{k,\alpha},
  \norm{\rho}_{k+1,\alpha}, 1/{\rho_{\min}})
\end{equation}
for some positive constants $C$ depending only on these quantities listed above.
\end{lemma}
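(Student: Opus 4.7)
The plan is to bootstrap between the two regularity results already in hand: Lemma \ref{lemNak} provides an exact differentiation formula in which $\nabla N_a$ appears as a new generalized Riesz potential of lower order $a-1$, while Lemma \ref{lem1207} then delivers the sharp $C^{k,\alpha}$ estimate for such an integral. Applying these in succession converts the regularity of $N_a$ at order $k$ into regularity at order $k+1$, and this is precisely the gain needed.

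First I would invoke Lemma \ref{lemNak} in the first-order case $k=1$, which is legitimate because $a>1$. Using \eqref{dNa} with $k=1$ (equivalently \eqref{eq:13}), we have
\begin{equation*}
  \nabla_i N_a(z) = z^{A_1}_i \,\widetilde N_{1;A_1}(z),
\end{equation*}
where
\begin{equation*}
  \widetilde N_{1;A_1}(z)
  = \int_\Omega P_2\bigl(\omega,\pd\omega,\varsigma^A,\pd\varsigma^A\bigr)\Bigl(\tfrac{y-z}{|y-z|}\Bigr)\frac{\phi(y)}{|y-z|^{n-(a-1)}}\dd y,
\end{equation*}
with $P_2$ a second-degree polynomial in the indicated variables. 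Thus $\widetilde N_{1;A_1}$ has exactly the structural form of a generalized Riesz potential of order $a-1$, with new angular weight $\widetilde\omega(\varsigma):=P_2(\omega,\pd\omega,\varsigma^A,\pd\varsigma^A)(\varsigma)$ on $\uS$ and the same density $\phi$.

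The next step is to verify that $\widetilde\omega$ satisfies the assumptions of Lemma \ref{lem1207}. Since $\omega\in C^{k+1,\alpha}(\uS)\cap C^{1,1}(\uS)$, both $\omega$ and $\pd\omega$ lie in $C^{k,\alpha}(\uS)\cap C^{0,1}(\uS)$; the spherical coordinate functions $\varsigma^A$ and $\pd\varsigma^A$ are smooth on $\uS$; and $P_2$ is polynomial in these ingredients. Hence $\widetilde\omega\in C^{k,\alpha}(\uS)\cap C^{0,1}(\uS)$, with norms controlled by $\norm{\omega}_{k+1,\alpha}$ and $\norm{\omega}_{1,1}$. Combined with $\rho\in C^{k+1,\alpha}(\uS)$ and $\phi\in C^{k,\alpha}(\Omega)$, Lemma \ref{lem1207} applied to $\widetilde N_{1;A_1}$ at order $a-1>0$ yields $\widetilde N_{1;A_1}\in C^{k,\alpha}(\pd\Omega)$ with an estimate of the form required by \eqref{eq:17}.

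Finally, $\rho\in C^{k+1,\alpha}(\uS)$ forces $\pd\Omega$ to be parametrized by a $C^{k+1,\alpha}$ map, so the Euclidean coordinates $z^A\vert_{\pd\Omega}$ belong to $C^{k+1,\alpha}(\pd\Omega)$, and in particular the tangential derivatives $z^{A_1}_i$ lie in $C^{k,\alpha}(\pd\Omega)$. The product rule applied to $\nabla_iN_a = z^{A_1}_i\widetilde N_{1;A_1}$ then yields $\nabla N_a\in C^{k,\alpha}(\pd\Omega)$, which together with the $C^0$ bound of Lemma \ref{lem1128} gives \eqref{eq:17}. The only non-routine point in this plan is the regularity bookkeeping for $\widetilde\omega$: differentiating $N_a$ once costs exactly one derivative of $\omega$, and a bounded Lipschitz weight must remain to invoke Lemma \ref{lem1207}; this is precisely why the assumption $\omega\in C^{1,1}$, rather than only $\omega\in C^{1,\alpha}$, is imposed.
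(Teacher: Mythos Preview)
Your proposal is correct and follows essentially the same approach as the paper: differentiate once via Lemma \ref{lemNak} (equation \eqref{eq:13}) to express $\nabla N_a$ as $z^{A_1}_i$ times a potential of order $a-1$ with angular weight built from $\omega,\pd\omega,\varsigma^A,\pd\varsigma^A$, then apply Lemma \ref{lem1207} to that integral and combine with the $C^0$ bound of Lemma \ref{lem1128}. Your write-up is in fact slightly more explicit than the paper's in checking that the new weight $\widetilde\omega$ lies in $C^{k,\alpha}\cap C^{0,1}$ and that the coefficient $z^{A_1}_i$ is $C^{k,\alpha}$, but the underlying argument is the same.
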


\begin{proof}{}
Since $a>1$, by Lemma \ref{lemNak}, $N_a\in C^1(\pd\Omega)$, and \eqref{eq:13}, there holds
\begin{equation*}
  \nabla N_{a}(z)
  = \nabla z^{A_1}
  \int_{\Omega}
  [(n-a) \omega  \varsigma^{A_1}
  - \pd_{\tilde{\alpha}} \varsigma^{A_1} \pd_{\tilde{\alpha}} \omega]
  \Bigl( \frac{y-z}{|y-z|} \Bigr)
  \frac{\phi(y)}{|y-z|^{n-(a-1)}} \dd y.
\end{equation*}
Note that the above integral has the same form as $N_a$, by the assumptions of this
lemma, we can apply Lemma \ref{lem1207} to it.
Therefore, we have $\nabla N_a\in C^{k,\alpha}(\pd\Omega)$, and
\begin{equation}\label{eq:120}
  \norm{\nabla N_a}_{k,\alpha} \leq
  C(n, a, k, \norm{\omega}_{k+1,\alpha}, \norm{\omega}_{1,1}, \norm{\phi}_{k,\alpha},
  \norm{\rho}_{k+1,\alpha}, 1/{\rho_{\min}}).
\end{equation}
Hence, $N_a\in C^{k+1,\alpha}(\pd\Omega)$, and then
combining \eqref{eq:120} and Lemma \ref{lem1128}, we obtain the desired estimate \eqref{eq:17}.
\end{proof}

\begin{remark}\label{rmk1209}
Utilizing arguments analogous to those in Lemmas \ref{lemNak} and \ref{lemNa>1}, one
can easily obtain the optimal regularity for $\mathcal{T}_a$ with $a>1$ defined in Lemma
\ref{lem1205}. That is,
if $a>1$, $k\geq0$, $0\leq\alpha\leq1$,
$\rho\in C^{k+1,\alpha}(\uS)$, and
$\varphi\in C^{k+1,\alpha}\cap C^{1,1}(\uS\times\pd\Omega\times\pd\Omega)$,
then $\mathcal{T}_a$ is $C^{k+1,\alpha}(\pd\Omega)$, and the corresponding norm satisfies
\begin{equation}\label{eq:122}
  \norm{\mathcal{T}_a}_{k+1,\alpha} \leq
  C(n, a, k, \norm{\varphi}_{k+1,\alpha}, \norm{\varphi}_{1,1},
  \norm{\rho}_{k+1,\alpha}, 1/{\rho_{\min}})
\end{equation}
for some positive constants $C$ depending only on these quantities listed above.
\end{remark}

On the other hand, we can also derive the higher regularity results of $N_a$ with respect to the boundary of domain for $0<a\leq 1$.

\subsection{Higher regularity of $N_a$ for $0<a\leq1$}
To obtain that, we first require the following lemma.

\begin{lemma} \label{lem1211}
Let $F$ be a continuous function defined in the open set
\begin{equation*}
  \Pi=\set{(u,v)\in\uS\times\uS : u\neq v},
\end{equation*}
whose partial derivative $\nabla_{v_i}F$ is also continuous.
Assume there exist positive constants $C_1$, $C_2$, $\beta_1$, $\beta_2$,
such that for any $(u,v)\in\Pi$,
\begin{align}
  |F(u,v)|&\leq C_1|u-v|^{2-n+\beta_1},
            \label{eq:141} \\
  |\nabla_{v_i}F(u,v)|&\leq C_2|u-v|^{1-n+\beta_2}.
                        \label{eq:142}
\end{align}
Then, the function $g:\uS\to\R$ given by
\begin{equation} \label{eq:151}
  g(v)=\int_{\uS} F(u,v) \dd u
\end{equation}
has a continuous partial derivative $\nabla_{v_i}g$.
Moreover, there holds
\begin{equation} \label{eq:143}
  \nabla_{v_i}g(v)=\int_{\uS} \nabla_{v_i}F(u,v) \dd u.
\end{equation}
\end{lemma}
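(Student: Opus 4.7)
The plan is to regularize the singular integral by inserting a smooth cutoff supported away from the diagonal $u=v$, differentiate under the integral sign on the regularized object (where the integrand is now smooth), and then pass to the limit using the integrability estimates provided by the hypotheses \eqref{eq:141}--\eqref{eq:142}. Specifically, fix a smooth function $\chi\colon[0,\infty)\to[0,1]$ with $\chi\equiv0$ on $[0,1]$ and $\chi\equiv1$ on $[2,\infty)$, set $\chi_\epsilon(t)=\chi(t/\epsilon)$, and define
\begin{equation*}
  g_\epsilon(v)=\int_{\uS}\chi_\epsilon(|u-v|)\,F(u,v)\dd u.
\end{equation*}
Because $\chi_\epsilon(|u-v|)$ vanishes on $\{|u-v|\leq\epsilon\}$, the integrand is $C^1$ jointly on $\uS\times\uS$, so the Leibniz rule applies and yields
\begin{equation*}
  \nabla_{v_i}g_\epsilon(v)
  =\int_{\uS}\chi_\epsilon(|u-v|)\nabla_{v_i}F(u,v)\dd u
  +\int_{\uS}\chi'_\epsilon(|u-v|)\nabla_{v_i}|u-v|\cdot F(u,v)\dd u.
\end{equation*}

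Next I would establish three convergences as $\epsilon\to0^+$. First, since $|F(u,v)|\leq C_1|u-v|^{2-n+\beta_1}$, a direct computation in spherical/geodesic polar coordinates around $v$ gives $|g(v)-g_\epsilon(v)|\leq C\epsilon^{1+\beta_1}$, uniformly in $v$; hence $g_\epsilon\rightrightarrows g$ on $\uS$. Second, the bound $|\nabla_{v_i}F(u,v)|\leq C_2|u-v|^{1-n+\beta_2}$ (integrable on $\uS$ since $\beta_2>0$) together with $\chi_\epsilon\to1$ pointwise away from the diagonal lets me apply dominated convergence (with a $v$-neighborhood-uniform majorant of the form $C|u-v_0|^{1-n+\beta_2}$) to conclude
\begin{equation*}
  \int_{\uS}\chi_\epsilon(|u-v|)\nabla_{v_i}F(u,v)\dd u
  \longrightarrow
  \int_{\uS}\nabla_{v_i}F(u,v)\dd u,
\end{equation*}
locally uniformly in $v$. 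Third, and this is the key nontrivial point, the cutoff term vanishes: since $|\chi'_\epsilon|\leq C\epsilon^{-1}$ is supported on the annulus $\{\epsilon\leq|u-v|\leq 2\epsilon\}$, whose $(n-1)$-dimensional measure is $O(\epsilon^{n-1})$, and since $|\nabla_{v_i}|u-v||\leq 1$, one estimates
\begin{equation*}
  \left|\int_{\uS}\chi'_\epsilon(|u-v|)\nabla_{v_i}|u-v|\,F(u,v)\dd u\right|
  \leq
  \frac{C}{\epsilon}\int_{\epsilon\leq|u-v|\leq 2\epsilon}|u-v|^{2-n+\beta_1}\dd u
  \leq C\epsilon^{\beta_1}\to 0,
\end{equation*}
where $\beta_1>0$ is exactly what we need.

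Combining these, $\nabla_{v_i}g_\epsilon$ converges uniformly (on compact sets) to the right-hand side of \eqref{eq:143}, while $g_\epsilon\rightrightarrows g$, so a standard real-analysis lemma lets me interchange the limit with the derivative to conclude that $g$ is differentiable in the $v_i$ direction with the advertised formula. Finally, continuity of $\nabla_{v_i}g$ follows from one more dominated-convergence argument applied to $\int_{\uS}\nabla_{v_i}F(u,v)\dd u$ along sequences $v_k\to v$, using again the integrable majorant built from $|u-v|^{1-n+\beta_2}$.

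The main obstacle is the cutoff-term estimate in the third bullet: it is the only place the quantitative decay exponent $\beta_1>0$ on $F$ is used, and without it one cannot kill the $O(\epsilon^{-1})$ blow-up of $\chi'_\epsilon$. Everything else (uniform $C^0$ convergence, dominated convergence on the bulk, continuity of the limit) is routine once this cutoff error is controlled.
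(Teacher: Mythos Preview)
Your argument is correct and takes a genuinely different route from the paper's. The paper uses a \emph{sharp} cutoff, defining $g_\epsilon(v)=\int_{\arccos\langle u,v\rangle\geq\epsilon}F(u,v)\dd u$; because the integration domain now depends on $v$, they introduce an orthogonal matrix $Q(v)$ to rotate the domain to a fixed set, differentiate, and then convert the resulting $\nabla_u F$ term to a boundary integral via the divergence theorem---which in turn forces them to first mollify $F$ to a sequence $F_m\in C^1(\Pi)$ (since no $u$-regularity of $F$ is assumed). Your smooth cutoff $\chi_\epsilon(|u-v|)$ keeps the domain fixed at all of $\uS$, so ordinary differentiation under the integral sign applies directly with no moving-boundary term and no need for the $F_m$ approximation; the role of the paper's boundary integral is played by your annulus term $\int\chi'_\epsilon\,\nabla_{v_i}|u-v|\,F\dd u$, and both vanish like $\epsilon^{\beta_1}$ by the same computation. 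Your approach is more elementary and shorter; the paper's rotation--divergence trick is more elaborate but makes the boundary contribution geometrically explicit. One small remark: your claim that the cut-off integrand is ``$C^1$ jointly'' slightly overstates what is known (no $u$-differentiability is assumed), but what Leibniz actually needs---continuity in $(u,v)$ and continuous $\nabla_{v_i}$---does hold on the support of $\chi_\epsilon$, so the step is valid. Also note that continuity of $\nabla_{v_i}g$ already follows from the uniform convergence of the continuous functions $\nabla_{v_i}g_\epsilon$, so your final dominated-convergence step is not needed.
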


\begin{proof}{}
Let $F_m\in C^1(\Pi)$ be a sequence of functions satisfying
\begin{equation} \label{eq:144}
  F_m\rightrightarrows F
  \ \text{ and } \
  \nabla_{v_i}F_m\rightrightarrows \nabla_{v_i}F
\end{equation}
uniformly on any compact subset of $\Pi$ as $m\to+\infty$.
For each $\epsilon\in(0,1)$, define $g_{\epsilon}^m$ as
\begin{equation}\label{eq:15}
  g_{\epsilon}^m(v)=\int_{\arccos\langle u,v \rangle\geq\epsilon} F_m(u,v) \dd u,
  \quad \forall v\in\uS.
\end{equation}

Given any $v\in\uS$, there exists an orthogonal matrix $Q(v)$ such that
\begin{equation}\label{eq:131}
  Q(v)^Tv=e_n=(0,\cdots,0,1)^T\in\uS.
\end{equation}
For any fixed $v_0\in\uS$, we can find a small open neighborhood $O(v_0)\subset\uS$, such that
$Q(v)$ is smooth with respect to $v\in O(v_0)$.
Now, for each $v\in O(v_0)$, by applying the variable substitution $u=Q(v)\eta$
to the integral in \eqref{eq:15}, we obtain
\begin{equation*}
  g_{\epsilon}^m(v)=\int_{\arccos\langle \eta,e_n \rangle\geq\epsilon} F_m(Q(v)\eta,v) \dd\eta.
\end{equation*}
Since $F_m\in C^1(\Pi)$, using the Leibniz integral rule, we have
\begin{equation}\label{eq:129}
  \begin{split}
    \nabla_{v_i} g_{\epsilon}^m(v)
    &=\int_{\arccos\langle \eta,e_n \rangle\geq\epsilon}
    \nabla_{v_i} [F_m(Q(v)\eta,v)]
    \dd\eta \\
    &=
    \int_{\arccos\langle \eta,e_n \rangle\geq\epsilon}
    [
    \langle
    (\nabla_uF_m)(Q(v)\eta,v),\nabla_{v_i}Q(v)\eta
    \rangle
    +
    (\nabla_{v_i}F_m)(Q(v)\eta,v)
    ]
    \dd\eta \\
    &=
    \int_{\arccos\langle u,v \rangle\geq\epsilon}
    [
    \langle
    \nabla_uF_m(u,v),\nabla_{v_i}Q(v)Q(v)^Tu
    \rangle
    +
    \nabla_{v_i}F_m(u,v)
    ]
    \dd u,
  \end{split}
\end{equation}
where the variable substitution $\eta=Q(v)^Tu$ is used to obtain the last equality.

For the first integral term, by the divergence theorem, we find
\begin{multline}\label{eq:130}
  \int_{\arccos\langle u,v \rangle\geq\epsilon}
  \langle
  \nabla_uF_m(u,v),\nabla_{v_i}Q(v)Q(v)^Tu
  \rangle
  \dd u \\
  = \int_{\arccos\langle u,v \rangle=\epsilon}
  F_m(u,v)
  \langle
  \nabla_{v_i}Q(v)Q(v)^Tu,\vec{n}
  \rangle
  \dd \sigma(u) \\
  -\int_{\arccos\langle u,v \rangle\geq\epsilon}
  F_m\DIV_u(\nabla_{v_i}Q(v)Q(v)^Tu)
  \dd u,
\end{multline}
where $\vec{n}$ is the unit outer normal of the boundary of
$\set{u\in\uS : \arccos\langle u,v \rangle\geq\epsilon}$,
namely,
\begin{equation}\label{eq:132}
  \vec{n}=
  \frac{v-\langle v,u \rangle u}{|v-\langle v,u \rangle u|}.
\end{equation}
Recalling $Q(v)$ is an orthogonal matrix, namely $Q(v)Q(v)^T=I_n$, we obtain
\begin{equation*}
  \nabla_{v_i} Q(v) Q(v)^T
  = -Q(v) \nabla_{v_i} Q(v)^T
  = -\left(\nabla_{v_i} Q(v) Q(v)^T \right)^T.
\end{equation*}
Therefore,
\begin{equation}\label{eq:133}
  u^T \nabla_{v_i} Q(v) Q(v)^T u =0
  \quad \text{ and } \quad
  \TR \left(\nabla_{v_i} Q(v) Q(v)^T \right) =0,
\end{equation}
which give
\begin{equation*}
  \DIV_u(\nabla_{v_i}Q(v)Q(v)^Tu)=0.
\end{equation*}
Inserting it into \eqref{eq:130}, we derive
\begin{multline}\label{eq:145}
  \int_{\arccos\langle u,v \rangle\geq\epsilon}
  \langle
  \nabla_uF_m(u,v),\nabla_{v_i}Q(v)Q(v)^Tu
  \rangle
  \dd u \\
  = \int_{\arccos\langle u,v \rangle=\epsilon}
  F_m(u,v)
  \langle
  \nabla_{v_i}Q(v)Q(v)^Tu,\vec{n}
  \rangle
  \dd \sigma(u).
\end{multline}
Recalling \eqref{eq:131}, we have
$v^T\nabla_{v_i}Q(v)=-v_i^TQ(v)$,
which together with \eqref{eq:132} and \eqref{eq:133}, yields that
\begin{equation*}
  \begin{split}
    |v-\langle v,u \rangle u|
    \cdot
    \langle \nabla_{v_i}Q(v)Q(v)^Tu,\vec{n} \rangle
    &=
    \left(v^T-\langle v,u \rangle u^T\right)
    \nabla_{v_i}Q(v)Q(v)^Tu \\
    &= v^T \nabla_{v_i}Q(v)Q(v)^Tu \\
    &= -v_i^Tu \\
    &= \langle -v_i, u-\langle v,u \rangle v \rangle,
  \end{split}
\end{equation*}
namely,
\begin{equation}\label{eq:134}
  \langle \nabla_{v_i}Q(v)Q(v)^Tu,\vec{n} \rangle
  =
  \Bigl\langle
  -v_i, \frac{u-\langle v,u \rangle v}{|u-\langle v,u \rangle v|}
  \Bigr\rangle.
\end{equation}
Therefore, the equality \eqref{eq:145} reads
\begin{multline}\label{eq:146}
  \int_{\arccos\langle u,v \rangle\geq\epsilon}
  \langle
  \nabla_uF_m(u,v),\nabla_{v_i}Q(v)Q(v)^Tu
  \rangle
  \dd u \\
  =
  \int_{\arccos\langle u,v \rangle=\epsilon}
  F_m(u,v)
  \Bigl\langle
  -v_i, \frac{u-\langle v,u \rangle v}{|u-\langle v,u \rangle v|}
  \Bigr\rangle
  \dd \sigma(u).
\end{multline}
Now, by \eqref{eq:129}, we derive
\begin{multline}\label{eq:148}
  \nabla_{v_i} g_{\epsilon}^m(v)
  =
  \int_{\arccos\langle u,v \rangle=\epsilon}
  F_m(u,v)
  \Bigl\langle
  -v_i, \frac{u-\langle v,u \rangle v}{|u-\langle v,u \rangle v|}
  \Bigr\rangle
  \dd \sigma(u)
  \\
  +\int_{\arccos\langle u,v \rangle\geq\epsilon}
  \nabla_{v_i}F_m(u,v)
  \dd u.
\end{multline}
Note that this differentiation formula holds for any $v\in\uS$.

For each fixed $\epsilon$, as $m\to+\infty$,  from \eqref{eq:144} and
\eqref{eq:15}, we see that $g_{\epsilon}^m$ converges uniformly to $g_{\epsilon}$, which
is given by
\begin{equation}\label{eq:149}
  g_{\epsilon}(v)=\int_{\arccos\langle u,v \rangle\geq\epsilon} F(u,v) \dd u,
  \quad \forall v\in\uS.
\end{equation}
By \eqref{eq:144} and \eqref{eq:148}, we also see that
$\nabla_{v_i}g_{\epsilon}^m$ converges uniformly on $\uS$.
Thus, $\nabla_{v_i}g_{\epsilon}$ exists and satisfies
\begin{multline}\label{eq:150}
  \nabla_{v_i} g_{\epsilon}(v)
  =
  \int_{\arccos\langle u,v \rangle=\epsilon}
  F(u,v)
  \Bigl\langle
  -v_i, \frac{u-\langle v,u \rangle v}{|u-\langle v,u \rangle v|}
  \Bigr\rangle
  \dd \sigma(u)
  \\
  +\int_{\arccos\langle u,v \rangle\geq\epsilon}
  \nabla_{v_i}F(u,v)
  \dd u.
\end{multline}

By the assumption \eqref{eq:141}, one sees
\begin{equation*}
  \begin{split}
    \int_{\arccos\langle u,v \rangle<\epsilon} |F(u,v)| \dd u
    &\leq C_1 \int_{\arccos\langle u,v \rangle<\epsilon} |u-v|^{2-n+\beta_1} \dd u \\
    &= C_1 \int_0^{\epsilon} \sin^{n-2}\theta \dd\theta
    \int_{\zeta\bot v} \left( 2\sin\frac{\theta}{2} \right)^{2-n+\beta_1} \dd\zeta \\
    &\leq C_1 |\mathbb{S}^{n-2}| \int_0^{\epsilon} \theta^{\beta_1} \dd\theta \\
    &= \frac{C_1}{1+\beta_1} |\mathbb{S}^{n-2}| \epsilon^{1+\beta_1},
  \end{split}
\end{equation*}
tending to zero uniformly for $v\in\uS$ as $\epsilon\to+0$.
Therefore, $g_{\epsilon}$ in \eqref{eq:149} converges uniformly to $g$ given in
\eqref{eq:151}.
From assumptions \eqref{eq:141} and \eqref{eq:142}, we have
\begin{equation*}
  \begin{split}
    \int_{\arccos\langle u,v \rangle=\epsilon}
    \Bigl| F(u,v) \Bigl\langle
    -v_i, & \frac{u-\langle v,u \rangle v}{|u-\langle v,u \rangle v|}
    \Bigr\rangle \Bigr|
    \dd \sigma(u) \\
    &\leq C_1 |v_i| \int_{\arccos\langle u,v \rangle=\epsilon} |u-v|^{2-n+\beta_1} \dd \sigma(u) \\
    &= C_1 |v_i| \left( 2\sin\frac{\epsilon}{2} \right)^{2-n+\beta_1}
    |\mathbb{S}^{n-2}| \sin^{n-2}\epsilon \\
    &\leq |\mathbb{S}^{n-2}| C_1 |v_i| \epsilon^{\beta_1},
  \end{split}
\end{equation*}
and
\begin{equation*}
  \begin{split}
    \int_{\arccos\langle u,v \rangle<\epsilon} |\nabla_{v_i}F(u,v)| \dd u
    &\leq C_2 \int_{\arccos\langle u,v \rangle<\epsilon} |u-v|^{1-n+\beta_2} \dd u \\
    &= C_2 \int_0^{\epsilon} \sin^{n-2}\theta \dd\theta
    \int_{\zeta\bot v} \left( 2\sin\frac{\theta}{2} \right)^{1-n+\beta_2} \dd\zeta \\
    &\leq \frac{\pi}{2} C_2 |\mathbb{S}^{n-2}| \int_0^{\epsilon} \theta^{\beta_2-1} \dd\theta \\
    &= \frac{\pi C_2}{2\beta_2} |\mathbb{S}^{n-2}| \epsilon^{\beta_2},
  \end{split}
\end{equation*}
illustrating that $\nabla_{v_i}g_{\epsilon}$ in \eqref{eq:150} converges uniformly on $\uS$ as $\epsilon\to+0$.
Thus, $\nabla_{v_i}g$ exists and satisfies
\begin{equation*}
  \nabla_{v_i} g(v)
  = \int_{\uS} \nabla_{v_i}F(u,v) \dd u,
\end{equation*}
which is just \eqref{eq:143}.
The proof of this lemma is completed.
\end{proof}
With the help of Lemma \ref{lem1211}, we can establish the following lemma.
\begin{lemma} \label{lem1226}
Assume $\pd\Omega$ is $C^1$.
Let $G$ be a continuous function defined in the open set
\begin{equation}\label{eq:160}
  \Sigma=\set{(y,z)\in\pd\Omega\times\pd\Omega : y\neq z},
\end{equation}
whose partial derivative $\nabla_{z_i}G$ is also continuous.
Suppose there exist positive constants $C_1$, $C_2$, $\beta_1$, $\beta_2$,
such that for any $(y,z)\in\Sigma$,
\begin{align}
  |G(y,z)|&\leq C_1|y-z|^{2-n+\beta_1},
            \label{eq:152} \\
  |\nabla_{z_i}G(y,z)|&\leq C_2|y-z|^{1-n+\beta_2}.
                        \label{eq:138}
\end{align}
Then, the function $\varphi:\pd\Omega\to\R$ given by
\begin{equation} \label{eq:147}
  \varphi(z)=\int_{\pd\Omega} G(y,z) \dd y
\end{equation}
has a continuous partial derivative $\nabla_{z_i}\varphi$.
Moreover, there holds
\begin{equation}\label{eq:153}
  \nabla_{z_i}\varphi(z)=\int_{\pd\Omega} \nabla_{z_i}G(y,z) \dd y.
\end{equation}
\end{lemma}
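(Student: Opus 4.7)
The plan is to reduce Lemma \ref{lem1226} to the already-established sphere version, Lemma \ref{lem1211}, via the radial parametrization of $\pd\Omega$. Since $\pd\Omega$ is $C^1$, we have $\rho\in C^1(\uS)$ with $\rho_{\min}>0$, and the radial map $r_\Omega:\uS\to\pd\Omega$, $r_\Omega(u)=\rho(u)u$, is a $C^1$-diffeomorphism. Setting $y=r_\Omega(u)$, $z=r_\Omega(v)$, and writing the surface area element as $\dd y = J(u)\dd u$ for the continuous positive Jacobian $J$ on $\uS$, the integral \eqref{eq:147} becomes
\begin{equation*}
  \tilde{\varphi}(v) := \varphi(r_\Omega(v)) = \int_{\uS} F(u,v)\dd u,
  \qquad
  F(u,v) := G(r_\Omega(u),r_\Omega(v)) J(u).
\end{equation*}

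Next I verify that $F$ satisfies the hypotheses of Lemma \ref{lem1211}. The key ingredient is the two-sided comparison \eqref{eq:123},
\begin{equation*}
  \tfrac{2\rho_{\min}}{\pi}\theta \leq |y-z| = |r_\Omega(u)-r_\Omega(v)| \leq \norm{\rho}_1\theta,
\end{equation*}
where $\theta$ is the angle between $u$ and $v$, which by \eqref{eq:118} is comparable to $|u-v|$. Feeding this comparison into \eqref{eq:152} and \eqref{eq:138}, and using that $r_\Omega\in C^1$ with uniformly bounded derivative, the chain rule
\begin{equation*}
  \nabla_{v_i} F(u,v) = J(u)\cdot\langle (\nabla_z G)(r_\Omega(u),r_\Omega(v)), \nabla_{v_i} r_\Omega(v)\rangle
\end{equation*}
together with the boundedness of $J$ yields
\begin{equation*}
  |F(u,v)| \leq C_1'\,|u-v|^{2-n+\beta_1},
  \qquad
  |\nabla_{v_i} F(u,v)| \leq C_2'\,|u-v|^{1-n+\beta_2},
\end{equation*}
with constants $C_1',C_2'$ controlled by $C_1,C_2,\norm{\rho}_1, 1/\rho_{\min}$. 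Continuity of $F$ and $\nabla_{v_i}F$ on the open set $\set{(u,v)\in\uS\times\uS : u\neq v}$ follows from the corresponding continuity of $G$ and $\nabla_{z_i}G$ on $\Sigma$.

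Lemma \ref{lem1211} then gives $\tilde\varphi\in C^1$ with
\begin{equation*}
  \nabla_{v_i}\tilde\varphi(v) = \int_{\uS} \nabla_{v_i} F(u,v)\dd u,
\end{equation*}
and pulling back by $r_\Omega^{-1}$, which is $C^1$, converts this into the desired formula \eqref{eq:153} after reversing the change of variables $u\mapsto y$. The main obstacle is precisely this bookkeeping: the chain rule translation between the tangential derivative $\nabla_{z_i}$ on $\pd\Omega$ and the sphere derivative $\nabla_{v_i}$ on $\uS$ must preserve the exact growth exponents $2-n+\beta_1$ and $1-n+\beta_2$, which forces one to exploit both sides of \eqref{eq:123} and the positive lower bound $\rho_{\min}>0$; once this is in place, the deeper analytic input has already been supplied by Lemma \ref{lem1211}.
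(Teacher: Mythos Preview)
Your proposal is correct and follows essentially the same route as the paper: pull back to $\uS$ via the radial map, write $\varphi(r_\Omega(v))=\int_{\uS}F(u,v)\dd u$ with $F(u,v)=G(r_\Omega(u),r_\Omega(v))J(u)$ (the paper spells out $J(u)=\rho(u)^{n-2}|\delbar\rho(u)|$), then use the two-sided comparability \eqref{eq:118}, \eqref{eq:154}, \eqref{eq:121} between $|y-z|$ and $|u-v|$ to transfer the growth bounds to $F$ and $\nabla_{v_i}F$, and finally invoke Lemma \ref{lem1211}. The only cosmetic difference is that the paper fixes the frame by choosing $v_i$ with $(r_\Omega)_*v_i=z_i$, so that $\nabla_{v_i}F=J(u)\,(\nabla_{z_i}G)(r_\Omega(u),r_\Omega(v))$ directly, rather than writing an inner product with $\nabla_{v_i}r_\Omega$.
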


\begin{proof}
Using the variable substitution $y=\vec{r}(u)=\rho(u)u$, we have
\begin{equation*}
  \varphi(z)
  =\int_{\uS} G(\vec{r}(u),z)
  \rho(u)^{n-2} |\delbar\rho(u)| \dd u.
\end{equation*}
Define $g(v)=\varphi(\vec{r}(v))$, namely
\begin{equation*}
  g(v)
  = \int_{\uS} G(\vec{r}(u),\vec{r}(v))
  \rho(u)^{n-2} |\delbar\rho(u)| \dd u
  =: \int_{\uS} F(u,v) \dd u.
\end{equation*}
Since $\rho\in C^1$ and $G$ is continuous in $\Sigma$,
$F$ is continuous in $\Pi$.
Note $\max|\delbar\rho|\leq \rho_{\max}^2/\rho_{\min}$, by the assumption
\eqref{eq:152}, one sees
\begin{equation*}
  \begin{split}
    |F(u,v)|
    &\leq
    C_1\rho_{\max}^n\rho_{\min}^{-1}
    |\vec{r}(u)-\vec{r}(v)|^{2-n+\beta_1}
    \\
    &\leq
    C_1\rho_{\max}^n\rho_{\min}^{1-n}
    \left( \frac{\pi}{2}\norm{\rho}_1 \right)^{\beta_1}
    |u-v|^{2-n+\beta_1},
  \end{split}
\end{equation*}
where the last inequality follows from the estimates \eqref{eq:118}, \eqref{eq:154}
and \eqref{eq:121} obtained in the proof of Lemma \ref{lem1205}.

Let $v_i$ be a tangent vector field on $\uS$ such that $\vec{r}_*v_i=z_i$.
Then,
\begin{equation*}
  \begin{split}
    \nabla_{v_i}F(u,v)
    &= \nabla_{v_i}\left[G(\vec{r}(u),\vec{r}(v)) \rho(u)^{n-2} |\delbar\rho(u)|\right] \\
    &= (\nabla_{z_i}G)(\vec{r}(u),\vec{r}(v)) \rho(u)^{n-2} |\delbar\rho(u)|.
  \end{split}
\end{equation*}
Similarly, by the assumptions on $\nabla_{z_i}G$, we have that $\nabla_{v_i}F$ is
continuous in $\Pi$, and
\begin{equation*}
  \begin{split}
    |\nabla_{v_i}F(u,v)|
    &\leq
    C_2\rho_{\max}^n\rho_{\min}^{-1}
    |\vec{r}(u)-\vec{r}(v)|^{1-n+\beta_2}
    \\
    &\leq
    C_2\rho_{\max}^n\rho_{\min}^{-n}
    \left( \frac{\pi}{2}\norm{\rho}_1 \right)^{\beta_2}
    |u-v|^{1-n+\beta_2}.
  \end{split}
\end{equation*}

Observe that $F$ satisfies all assumptions of Lemma \ref{lem1211}, from which we conclude that
\begin{equation*}
  \nabla_{v_i}g(v)=\int_{\uS} \nabla_{v_i}F(u,v) \dd u,
\end{equation*}
this means
\begin{equation*}
  (\nabla_{z_i}\varphi)(\vec{r}(v))
  = \int_{\uS} (\nabla_{z_i}G)(\vec{r}(u),\vec{r}(v)) \rho(u)^{n-2} |\delbar\rho(u)| \dd u
  = \int_{\pd\Omega} (\nabla_{z_i}G)(y,\vec{r}(v)) \dd y,
\end{equation*}
which is just the conclusion \eqref{eq:153} of this lemma.
\end{proof}


Next, we proceed to derive the following lemma.

\begin{lemma}\label{lem1213}
Assume $0\leq\alpha\leq1$, and $\rho\in C^{1,\alpha}(\uS)$.
Then
\begin{equation} \label{eq:156}
  |\langle  y-z,\nu_y \rangle|
  \leq C |y-z|^{1+\alpha},
  \quad
  \forall \,y,z\in\pd\Omega,
\end{equation}
where $C$ is a positive constant depending only on $\norm{\rho}_{1,\alpha}$
and $1/\rho_{\min}$.
\end{lemma}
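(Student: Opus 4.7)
The first move is to split the absolute value: the inequality $\langle y-z,\nu_y\rangle\geq 0$ is immediate from convexity, since the hyperplane through $y$ with outer normal $\nu_y$ supports $\Omega$, so every $z\in\Omega$ satisfies $\langle z-y,\nu_y\rangle\leq 0$. Hence it suffices to prove the upper bound $\langle y-z,\nu_y\rangle\leq C|y-z|^{1+\alpha}$, which quantifies the contact order between $\partial\Omega$ and its tangent hyperplane at $y$.

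The plan is to read this off from a local $C^{1,\alpha}$ graph representation of $\partial\Omega$. Since $\rho\in C^{1,\alpha}(\uS)$ with $\rho\geq\rho_{\min}>0$, the radial map $u\mapsto\rho(u)u$ is a $C^{1,\alpha}$ embedding of $\uS$ onto $\partial\Omega$, with norms controlled purely by $\norm{\rho}_{1,\alpha}$ and $1/\rho_{\min}$. After a rigid motion placing $y$ at the origin with $\nu_y=e_n$, one represents $\partial\Omega$ on some ball $\{|x'|<r_0\}\subset\R^{n-1}$ as $\{x_n=f(x')\}$ for a $C^{1,\alpha}$ function $f$; the normalization $\nu_y=e_n$ forces $f(0)=0$ and $\nabla f(0)=0$. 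Then for $z=(x',f(x'))$ the fundamental theorem of calculus gives
\[
  |\langle y-z,\nu_y\rangle|
  =|f(x')|
  \leq [\nabla f]_\alpha\,|x'|^{1+\alpha}\int_0^1 t^\alpha\dd t
  =\frac{[\nabla f]_\alpha}{1+\alpha}|x'|^{1+\alpha}
  \leq C|y-z|^{1+\alpha},
\]
using $|x'|\leq|y-z|$. A uniform lower bound on $r_0$ and a uniform upper bound on $[\nabla f]_\alpha$, depending only on the listed quantities, reduce the general case to this local one; for $z$ outside the neighborhood the bound is trivial since $|\langle y-z,\nu_y\rangle|\leq|y-z|\leq r_0^{-\alpha}|y-z|^{1+\alpha}$.

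The main obstacle is tracking these uniform constants across all base points $y\in\partial\Omega$ without having to invoke an implicit function argument. A cleaner equivalent route avoids the graph entirely: join $\bar z:=z/|z|$ and $\bar y:=y/|y|$ by a $C^1$ path $u(t)$ on $\uS$ with $|u'(t)|\leq C|\bar y-\bar z|$, set $\gamma(t)=\rho(u(t))u(t)\in\partial\Omega$, and note that $\gamma'(t)$ is tangent to $\partial\Omega$, so $\langle\gamma'(t),\nu_{\gamma(t)}\rangle\equiv 0$. Writing
\[
  \langle y-z,\nu_y\rangle
  =\int_0^1\langle\gamma'(t),\nu_y-\nu_{\gamma(t)}\rangle\dd t,
\]
and invoking the comparisons $|\rho(u)u-\rho(v)v|\leq\norm{\rho}_1|u-v|$ and $|u-v|\leq\tfrac{\pi}{2\rho_{\min}}|\rho(u)u-\rho(v)v|$ already established in the proof of Lemma \ref{lem1205} (so that $|\gamma'(t)|\leq C|y-z|$), one closes the estimate with $|\nu_y-\nu_{\gamma(t)}|\leq C|y-\gamma(t)|^\alpha\leq C|y-z|^\alpha$. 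This Hölder control of $\nu$ along $\partial\Omega$ follows from the explicit formula $\nu_{\rho(u)u}=(\rho(u)u-\nabla_{\uS}\rho(u))/\sqrt{\rho(u)^2+|\nabla_{\uS}\rho(u)|^2}$ together with $\rho\in C^{1,\alpha}(\uS)$, with every constant expressible in terms of $\norm{\rho}_{1,\alpha}$ and $1/\rho_{\min}$.
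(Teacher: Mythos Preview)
Your proposal is correct. Both routes you sketch are sound; the path-integral argument in particular is clean and complete as written, with all constants manifestly controlled by $\norm{\rho}_{1,\alpha}$ and $1/\rho_{\min}$.

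The paper takes a different, more computational path. Rather than integrating along a curve and invoking H\"older continuity of the Gauss map, it works directly in spherical coordinates: writing $y=\rho(u)u$, $z=\rho(\bar z)\bar z$, and using the identity $|\delbar\rho(u)|\,\nu_y=-\delbar\rho(u)$, it expands $|\delbar\rho(u)|\langle z-y,-\nu_y\rangle$ algebraically as $\rho(\bar z)\bigl[\langle\bar z-u,\delbar\rho(u)\rangle-(\rho(\bar z)-\rho(u))\bigr]+(\rho(\bar z)-\rho(u))^2$, and then bounds each piece by $C\theta^{1+\alpha}$ via the first-order Taylor remainder for $\rho\in C^{1,\alpha}$; the conversion $\theta\approx|y-z|$ from Lemma~\ref{lem1205} finishes. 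Your path-integral approach has the advantage of being more geometric and portable to any $C^{1,\alpha}$ hypersurface without a radial parametrization, while the paper's computation stays entirely within the $\rho$-calculus already set up and avoids the separate verification that $y\mapsto\nu_y$ is $C^{\alpha}$ along $\partial\Omega$ (which you derive from the explicit formula for $\nu$ in terms of $\rho$ and $\nabla_{\uS}\rho$). Both arguments rely on the same quantitative comparison between $|y-z|$ and the angular distance $\theta$.
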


\begin{proof}{}
Since $\rho\in C^{1,\alpha}(\uS)$,
there exists a positive constant $C_1$ depending only on the norm $\norm{\rho}_{1,\alpha}$,
such that for any $\bar{z},u\in\uS$,
\begin{equation} \label{eq:136}
  |\rho(\bar{z})-\rho(u)-\langle \nabla \rho(u),\zeta \rangle \theta|
  \leq
  C_1 \theta^{1+\alpha},
\end{equation}
where $\theta\in[0,\pi]$ is the angle between vectors $\bar{z}$ and $u$,
and $\zeta$ is a unit vector satisfying $\zeta\bot u$ and
\begin{equation*}
  \bar{z}=u\cos\theta+\zeta\sin\theta.
\end{equation*}
Then, we obtain the following estimate:
\begin{equation}\label{eq:137}
  \begin{split}
    |\rho(\bar{z})-\rho(u)&-\langle \nabla \rho(u),\bar{z}-u \rangle| \\
    &=
    |\rho(\bar{z})-\rho(u)-\langle \nabla \rho(u),\zeta \rangle \sin\theta| \\
    &\leq
    |\rho(\bar{z})-\rho(u)-\langle \nabla \rho(u),\zeta \rangle \theta|
    +
    |\langle \nabla \rho(u),\zeta \rangle (\theta-\sin\theta)| \\
    &\leq
    C_1\theta^{1+\alpha}
    +
    \frac{1}{6}|\nabla\rho(u)|\theta^3 \\
    &\leq
    C_2\theta^{1+\alpha},
  \end{split}
\end{equation}
where $C_2$ is a positive constant depending only on $\norm{\rho}_{1,\alpha}$.
Furthermore, we have
\begin{equation*}
  \begin{split}
    |\rho(\bar{z})-\rho(u)&-\langle \delbar \rho(u),\bar{z}-u \rangle| \\
    &=
    |\rho(\bar{z})-\rho(u)-\langle \nabla \rho(u)-\rho(u)u,\bar{z}-u \rangle| \\
    &\leq
    |\rho(\bar{z})-\rho(u)-\langle \nabla \rho(u),\bar{z}-u \rangle|
    +
    |\rho(u)\langle u,\bar{z}-u \rangle| \\
    &\leq
    C_2\theta^{1+\alpha}
    +
    \rho(u)(1-\cos\theta),
  \end{split}
\end{equation*}
where \eqref{eq:137} is used in the last inequality.
Note that $1-\cos\theta\leq \frac{1}{2}\theta^2$, we find
\begin{equation} \label{eq:139}
  |\rho(\bar{z})-\rho(u)-\langle \delbar \rho(u),\bar{z}-u \rangle|
  \leq
  C_3\theta^{1+\alpha},
\end{equation}
where $C_3$ is a positive constant depending only on $\norm{\rho}_{1,\alpha}$.

Now, for any $y,z\in\pd\Omega$, making use of the expression \eqref{tau}, there holds
\begin{equation*}
  \begin{split}
    |\delbar\rho(u)| \langle z-y,-\nu_y \rangle
    &=
    \langle \rho(\bar{z})\bar{z}-\rho(u)u, \delbar\rho(u) \rangle \\
    &=
    \rho(\bar{z}) \langle \bar{z}-u, \delbar\rho(u) \rangle
    +
    (\rho(\bar{z})-\rho(u)) \langle u, \delbar\rho(u) \rangle \\
    &=
    \rho(\bar{z}) \langle \bar{z}-u, \delbar\rho(u) \rangle
    -\rho(u) (\rho(\bar{z})-\rho(u)) \\
    &=
    \rho(\bar{z}) [ \langle \bar{z}-u, \delbar\rho(u) \rangle -(\rho(\bar{z})-\rho(u)) ]
    +(\rho(\bar{z})-\rho(u))^2,
  \end{split}
\end{equation*}
which together with \eqref{eq:139} and
$|\rho(\bar{z})-\rho(u)|\leq\theta\max|\nabla\rho|$
to indicate
\begin{equation}\label{eq:140}
  |\delbar\rho(u)|
  \cdot
  |\langle z-y,-\nu_y \rangle|
  \leq
  C_4\theta^{1+\alpha},
\end{equation}
where $C_4$ is a positive constant depending only on $\norm{\rho}_{1,\alpha}$.
Noting $|\delbar\rho(u)|\geq\rho_{\min}$ and recalling \eqref{eq:123}, we derive
\begin{equation*}
  |\langle z-y,-\nu_y \rangle|
  \leq
  C_5|y-z|^{1+\alpha},
\end{equation*}
where $C_5$ is a positive constant depending only on $\norm{\rho}_{1,\alpha}$
and $1/\rho_{\min}$.
The proof of this lemma is completed.
\end{proof}

Now,  recall
\begin{equation*}
  N_{a}(z)
  =\int_{\pd\Omega}
  |y-z|^{a-n} \langle y-z,\nu_y \rangle
  \omega\Bigl( \frac{y-z}{|y-z|} \Bigr)
  \phi_a(y,z)
  \dd y.
\end{equation*}
Building upon the above lemmas, we can establish the following regularity result.
\begin{lemma}\label{lem1231}
Let $0<a\leq1$ and $1-a<\alpha\leq1$.
Assume
$\rho\in C^{1,\alpha}(\uS)$,
$\omega\in C^1(\uS)$, and
$\phi\in C^1(\Omega)$.
Then $N_a\in C^1(\pd\Omega)$, and we have
\begin{multline} \label{eq:163}
  \nabla_iN_a(z)
  =\int_{\pd\Omega} |y-z|^{a-n} \bigl[
  (n-a) \langle z_i,\tfrac{y-z}{|y-z|} \rangle \langle \tfrac{y-z}{|y-z|},\nu_y \rangle \omega \phi_a
  -\langle z_i,\nu_y \rangle \omega \phi_a \\
  -\langle \tfrac{y-z}{|y-z|},\nu_y \rangle \langle z_i,\nabla\omega \rangle \phi_a
  +\langle y-z,\nu_y \rangle \omega \nabla_{z_i} \phi_a
  \bigr] \dd y,
\end{multline}
where $\phi_a$ is given by \eqref{phi-tilde}.
\end{lemma}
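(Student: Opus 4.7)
\textbf{Proof proposal for Lemma \ref{lem1231}.}
The strategy is to apply Lemma \ref{lem1226} to the boundary integral representation of $N_a$ furnished by Lemma \ref{lem012}. That is, I will set
\begin{equation*}
  G(y,z) = \omega\Bigl( \frac{y-z}{|y-z|} \Bigr)\, \frac{\langle y-z, \nu_y\rangle}{|y-z|^{n-a}}\, \phi_a(y,z),
  \qquad (y,z)\in\Sigma,
\end{equation*}
with $\Sigma$ as in \eqref{eq:160}, so that $N_a(z) = \int_{\pd\Omega} G(y,z)\,dy$. Lemma \ref{lem1213} immediately gives $|\langle y-z, \nu_y\rangle|\leq C|y-z|^{1+\alpha}$, whence
\begin{equation*}
  |G(y,z)| \leq C\, |y-z|^{a-n+1+\alpha} = C\,|y-z|^{2-n+\beta_1}
\end{equation*}
with $\beta_1 = a+\alpha-1>0$ by the assumption $\alpha>1-a$; this verifies hypothesis \eqref{eq:152}.

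Next, I would carry out the product-rule computation of $\nabla_{z_i}G(y,z)$. Writing $u=(y-z)/|y-z|$, the derivatives are $\nabla_{z_i}|y-z|^{a-n} = (n-a)|y-z|^{a-n-1}\langle z_i,u\rangle$, $\nabla_{z_i}\langle y-z,\nu_y\rangle = -\langle z_i,\nu_y\rangle$, $\nabla_{z_i}\omega(u) = -|y-z|^{-1}\langle z_i - \langle z_i,u\rangle u,\nabla\omega(u)\rangle$, and $\nabla_{z_i}\phi_a(y,z)=\int_0^1 \langle \nabla\phi(ty+(1-t)z), -(1-t)z_i\rangle t^{a-1}\,dt$. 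Collecting the four resulting summands gives precisely the four terms in the right-hand side of \eqref{eq:163}. The task is then to verify the bound $|\nabla_{z_i}G(y,z)|\leq C|y-z|^{1-n+\beta_2}$ with $\beta_2=a+\alpha-1>0$, required by hypothesis \eqref{eq:138}.

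Three of the four summands are routine, using Lemma \ref{lem1213} to absorb one factor of $|y-z|^{\alpha}$ from $\langle u,\nu_y\rangle$ (or $|y-z|^{1+\alpha}$ from $\langle y-z,\nu_y\rangle$ in the $\phi_a$-derivative term). The delicate term is the one from differentiating $\langle y-z,\nu_y\rangle$, which produces $|y-z|^{a-n}\langle z_i,\nu_y\rangle\omega\phi_a$; a priori this is only of order $|y-z|^{a-n}$, short of what is needed. Here I would exploit the crucial fact that $z_i$ is tangent to $\pd\Omega$ at $z$, so $\langle z_i,\nu_z\rangle = 0$ and
\begin{equation*}
  |\langle z_i,\nu_y\rangle| = |\langle z_i,\nu_y-\nu_z\rangle| \leq C|\nu_y-\nu_z| \leq C|y-z|^{\alpha},
\end{equation*}
where the last inequality uses $\rho\in C^{1,\alpha}(\uS)$ and the standard fact that the Gauss map of a $C^{1,\alpha}$ hypersurface is itself $C^{\alpha}$. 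This recovers the required bound $|y-z|^{1-n+\beta_2}$ for this term as well.

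With $|G|$ and $|\nabla_{z_i}G|$ controlled as above, Lemma \ref{lem1226} applies and yields both $N_a\in C^1(\pd\Omega)$ and the differentiation-under-the-integral identity
\begin{equation*}
  \nabla_iN_a(z) = \int_{\pd\Omega}\nabla_{z_i}G(y,z)\,dy,
\end{equation*}
which is exactly \eqref{eq:163}. The main obstacle, as indicated, lies in the $\langle z_i,\nu_y\rangle$ term; the remainder of the proof is a careful but direct product-rule computation plus the cancellation recorded in Lemma \ref{lem1213}.
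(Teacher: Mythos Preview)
Your proposal is correct and follows essentially the same route as the paper: both write $N_a$ via the boundary representation of Lemma \ref{lem012}, set $G(y,z)=|y-z|^{a-n}\langle y-z,\nu_y\rangle\,\omega\,\phi_a$, bound $|G|$ using Lemma \ref{lem1213}, compute $\nabla_{z_i}G$ by the product rule, and handle the dangerous term $|y-z|^{a-n}\langle z_i,\nu_y\rangle$ via the identity $\langle z_i,\nu_y\rangle=\langle z_i,\nu_y-\nu_z\rangle$ together with the $C^{\alpha}$ regularity of the Gauss map, before invoking Lemma \ref{lem1226}. The only cosmetic difference is that your expression for $\nabla_{z_i}\omega(u)$ carries the explicit tangential projector $z_i-\langle z_i,u\rangle u$, which agrees with the paper's $\langle z_i,\nabla\omega\rangle$ since $\nabla\omega$ is already tangent to $\uS$.
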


\begin{proof}
Let $\Sigma$ be as in \eqref{eq:160}.
Define $G:\Sigma\to\R$ as
\begin{equation}\label{eq:161}
  G(y,z)=
  |y-z|^{a-n} \langle y-z,\nu_y \rangle
  \omega\Bigl( \frac{y-z}{|y-z|} \Bigr)
  \phi_a(y,z).
\end{equation}
By the assumptions, $G$ is continuous in $\Sigma$.
Since $\rho\in C^{1,\alpha}$, applying Lemma \ref{lem1213}, we have
\begin{equation*}
  |G(y,z)|\leq
  \frac{C}{a} \norm{\omega}_0 \norm{\phi}_0 \cdot
  |y-z|^{1+\alpha+a-n},
\end{equation*}
where $C$ is the same constant as in \eqref{eq:156}.
Note that $1+\alpha+a-n>2-n$ when $\alpha>1-a$.

Recalling \eqref{eq:10} and \eqref{eq:9} in the proof of Lemma \ref{lemNak},
there is
\begin{equation}\label{eq:162}
  \begin{split}
    \nabla_{z_i}G(y,z)
    &= (n-a) |y-z|^{a-1-n} \langle z_i,\tfrac{y-z}{|y-z|} \rangle
    \langle y-z,\nu_y \rangle \omega \phi_a
    -|y-z|^{a-n} \langle z_i,\nu_y \rangle \omega \phi_a \\
    &\hskip1.11em
    -|y-z|^{a-1-n} \langle y-z,\nu_y \rangle \langle z_i,\nabla\omega \rangle \phi_a
    +|y-z|^{a-n} \langle y-z,\nu_y \rangle \omega \nabla_{z_i} \phi_a,
  \end{split}
\end{equation}
which is continuous in $\Sigma$.
Since $\rho\in C^{1,\alpha}$, $\nu_y$ is of class $C^{\alpha}$ with respect to
$y\in\pd\Omega$. Thus
\begin{equation*}
  |\langle z_i,\nu_y \rangle|
  = |\langle z_i,\nu_y-\nu_z \rangle|
  \leq |z_i|\cdot |\nu_y-\nu_z|
  \leq C_1|z_i|\cdot |y-z|^{\alpha},
\end{equation*}
where $C_1$ is a positive constant depending only on $\norm{\rho}_{1,\alpha}$
and $1/\rho_{\min}$.
It together with Lemma \ref{lem1213} to illustrate
\begin{equation*}
  |\nabla_{z_i}G(y,z)|
  \leq C_2 \max|z_i| \cdot |y-z|^{\alpha+a-n},
\end{equation*}
where $C_2$ is a positive constant depending only on $n$, $a$,
$\norm{\omega}_1$, $\norm{\phi}_1$, $\norm{\rho}_{1,\alpha}$
and $1/\rho_{\min}$.
Note that $\alpha+a-n>1-n$ when $\alpha>1-a$.

Now, we have already verified that $G$ in \eqref{eq:161} satisfies all
assumptions of Lemma \ref{lem1226}, whose conclusion says
\begin{equation*}
  \nabla_{z_i} \int_{\pd\Omega} G(y,z) \dd y
  =\int_{\pd\Omega} \nabla_{z_i}G(y,z) \dd y.
\end{equation*}
Recalling Lemma \ref{lem012}, the left hand side is just $\nabla_iN_a(z)$.
By replacing $\nabla_{z_i}G(y,z)$ with \eqref{eq:162}, we derive \eqref{eq:163}.
The proof of this lemma is completed.
\end{proof}

The following lemma plays a crucial role to achieve our aim.

\begin{lemma}\label{lem031}
The following conclusions hold:

\textup{(1)}
Assume $0<\alpha\leq1$ and $g\in C^{\alpha}(V\times W)$ where $V,W$ are normed sets.
If $g(v,0)=0$ for any $v\in V$, then for any nonzero $w\in W$ and any $\beta\in(0,\alpha)$,
we have
$g(\cdot,w)|w|^{\beta-\alpha}\in C^{\beta}(V)$, and
\begin{equation}\label{eq:101}
  \begin{split}
    \bigl\Vert g(\cdot,w)|w|^{\beta-\alpha}\bigr\Vert_{0;V}&\leq [g]_{\alpha}|w|^{\beta}, \\
    [g(\cdot,w)|w|^{\beta-\alpha}]_{\beta;V}&\leq 2[g]_{\alpha}.
  \end{split}
\end{equation}

\textup{(2)}
Assume $0<\alpha\leq1$, $k\geq0$ is an integer, and
$g\in C^{k,\alpha}(V\times W)$ where $V,W$ are compact sets.
If $g(v,0)=0$ for any $v\in V$, then for any nonzero $w\in W$ and any $\beta\in(0,\alpha)$,
we have
$g(\cdot,w)|w|^{\beta-\alpha}\in C^{k,\beta}(V)$, and
\begin{equation} \label{eq:102}
  \bigl\Vert g(\cdot,w)|w|^{\beta-\alpha} \bigr\Vert_{k,\beta;V}
  \leq (\DIAM(W)+3)\norm{g}_{k,\alpha}.
\end{equation}
\end{lemma}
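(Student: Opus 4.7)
The plan is to dispatch Part (1) by exploiting the vanishing hypothesis $g(v,0)=0$ to upgrade the joint Hölder estimate on $V\times W$ into a weighted Hölder estimate in $V$ alone, and then to obtain Part (2) essentially for free by applying Part (1) to each partial derivative $\partial_v^j g$ with $0\le j\le k$. The result is more bookkeeping than substance, but the core idea is the observation that $g(v,w)$ is uniformly small of order $|w|^\alpha$, which interacts favourably with the factor $|w|^{\beta-\alpha}$.

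For Part (1), fix a nonzero $w\in W$. Since $g(v,0)=0$, the joint $\alpha$-Hölder property gives the pointwise estimate
\begin{equation*}
  |g(v,w)|=|g(v,w)-g(v,0)|\le [g]_\alpha|w|^\alpha,
\end{equation*}
which multiplied by $|w|^{\beta-\alpha}$ yields the $C^0$ bound in \eqref{eq:101}. For the $\beta$-Hölder seminorm in $v$, I would split into two cases. If $|v_1-v_2|\le|w|$, use $|g(v_1,w)-g(v_2,w)|\le[g]_\alpha|v_1-v_2|^\alpha$; multiplying by $|w|^{\beta-\alpha}$ and dividing by $|v_1-v_2|^\beta$ leaves $[g]_\alpha|v_1-v_2|^{\alpha-\beta}|w|^{\beta-\alpha}\le[g]_\alpha$ since $\alpha-\beta>0$. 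If instead $|v_1-v_2|>|w|$, bound $|g(v_1,w)-g(v_2,w)|$ crudely by $2[g]_\alpha|w|^\alpha$ and divide by $|v_1-v_2|^\beta>|w|^\beta$ to get $2[g]_\alpha$. Taking the larger constant gives the second inequality of \eqref{eq:101}.

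For Part (2), the key observation is that differentiating $g(v,0)=0$ in $v$ shows $\partial_v^j g(v,0)=0$ for every $0\le j\le k$, while the hypothesis $g\in C^{k,\alpha}(V\times W)$ ensures each $\partial_v^j g$ lies in $C^\alpha(V\times W)$ with seminorm controlled by $\|g\|_{k,\alpha}$. Since $|w|^{\beta-\alpha}$ does not depend on $v$, one has $\partial_v^j(g(v,w)|w|^{\beta-\alpha})=(\partial_v^j g)(v,w)|w|^{\beta-\alpha}$. Applying Part (1) to each $\partial_v^j g$ controls the $C^0$-norm in $v$ by $\|g\|_{k,\alpha}|w|^\beta$ for $0\le j\le k$, and controls the $\beta$-Hölder seminorm of the $k$-th derivative by $2\|g\|_{k,\alpha}$. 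Using the elementary bound $|w|^\beta\le\DIAM(W)+1$ (since $\DIAM(W)^\beta\le\max\{1,\DIAM(W)\}\le\DIAM(W)+1$) and combining the two contributions yields $(\DIAM(W)+1+2)\|g\|_{k,\alpha}=(\DIAM(W)+3)\|g\|_{k,\alpha}$, which is exactly \eqref{eq:102}.

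There is no real obstacle, and the only subtle point is making sure the norm convention on the left of \eqref{eq:102} is the sum of the sup-norm of derivatives plus the top-order Hölder seminorm (or the max form), so that the constants $\DIAM(W)+1$ and $2$ combine cleanly into $\DIAM(W)+3$. The compactness of $V,W$ in Part (2) is used only to guarantee finiteness of $\|g\|_{k,\alpha}$ and of $\DIAM(W)$.
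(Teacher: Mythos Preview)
Your Part~(1) is correct and reaches the same constants as the paper; you use a case split on $|v_1-v_2|\lessgtr|w|$, while the paper instead multiplies the two bounds $|g(v,w)-g(\tilde v,w)|\le 2[g]_\alpha|w|^\alpha$ and $|g(v,w)-g(\tilde v,w)|\le[g]_\alpha|v-\tilde v|^\alpha$ after raising the latter to the power $\beta/(\alpha-\beta)$. Both are standard interpolation devices and neither is preferable.

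For Part~(2) your strategy---apply Part~(1) to each $\partial_v^j g$---is exactly the paper's, but your bookkeeping has a small gap that prevents you from hitting the constant $\DIAM(W)+3$. The step ``$[\partial_v^j g]_\alpha\le\|g\|_{k,\alpha}$ for $j<k$'' is not immediate: the $\alpha$-seminorm of a lower-order derivative is not part of the standard $C^{k,\alpha}$ norm, and the embedding $C^1\hookrightarrow C^\alpha$ costs either a diameter factor or a factor~$2$. Even granting it, under the usual sum convention for $\|\cdot\|_{k,\beta}$ you would pick up a factor $k+1$, not just $\DIAM(W)+1$. The paper avoids both issues by applying Part~(1) with exponent~$1$ (not~$\alpha$) for $j<k$: since $\partial_v^j g\in C^{0,1}$ with Lipschitz constant controlled by $(j{+}1)$-th derivatives, one gets
\[
\|\partial_v^j g(\cdot,w)\,|w|^{\beta-\alpha}\|_{0}\le[\partial_v^j g]_{0,1}\,|w|^{\beta+1-\alpha}\le\|\partial^{j+1}g\|_{0}\,(\DIAM(W)+1).
\]
Now each $C^0$ term on the left is bounded by a \emph{distinct} summand of $\|g\|_{k,\alpha}$, so the sum of the $C^0$ pieces is at most $(\DIAM(W)+1)\|g\|_{k,\alpha}$; adding the top H\"older contribution $2[\partial_v^k g]_\alpha\le 2\|g\|_{k,\alpha}$ gives exactly $\DIAM(W)+3$. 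Your route would still yield the lemma with a constant like $C(k,\DIAM V)(\DIAM(W)+1)$, which suffices for every downstream application, but not the constant as stated.
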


\begin{proof}{}
\textup{(1)}
Since $g\in C^{\alpha}(V\times W)$, for any $v,\tilde{v}\in V$ and $w,\tilde{w}\in W$, there is
\begin{equation} \label{eq:97}
  |g(v,w)-g(\tilde{v},\tilde{w})|
  \leq
  [g]_{\alpha}
  \left(
    |v-\tilde{v}|^{\alpha} + |w-\tilde{w}|^{\alpha}
  \right).
\end{equation}
Letting $\tilde{v}=v, \tilde{w}=0$ and recalling $g(v,0)=0$, one sees
\begin{equation*}
  |g(v,w)|
  \leq
  [g]_{\alpha}
  |w|^{\alpha},
\end{equation*}
which implies the first inequality of \eqref{eq:101}, and
\begin{equation}
  \label{eq:99}
  |g(v,w)-g(\tilde{v},w)|
  \leq
  2 [g]_{\alpha} |w|^{\alpha}.
\end{equation}
On the other hand, \eqref{eq:97} with $\tilde{w}=w$ says
\begin{equation*}
  |g(v,w)-g(\tilde{v},w)|
  \leq
  [g]_{\alpha}
  |v-\tilde{v}|^{\alpha},
\end{equation*}
namely
\begin{equation}
  \label{eq:98}
  |g(v,w)-g(\tilde{v},w)|^{\frac{\beta}{\alpha-\beta}}
  \leq
  [g]_{\alpha}^{\frac{\beta}{\alpha-\beta}}
  |v-\tilde{v}|^{\frac{\alpha\beta}{\alpha-\beta}}.
\end{equation}
Multiplying \eqref{eq:99} and \eqref{eq:98}, we obtain
\begin{equation*}
  |g(v,w)-g(\tilde{v},w)|^{\frac{\alpha}{\alpha-\beta}}
  \leq
  2 [g]_{\alpha}^{\frac{\alpha}{\alpha-\beta}}
  |w|^{\alpha}
  |v-\tilde{v}|^{\frac{\alpha\beta}{\alpha-\beta}},
\end{equation*}
which leads to
\begin{equation*}
  |g(v,w)-g(\tilde{v},w)|
  \leq
  2 [g]_{\alpha}
  |w|^{\alpha-\beta}
  |v-\tilde{v}|^{\beta}.
\end{equation*}
Therefore, the second inequality of \eqref{eq:101} is true.

\textup{(2)}
When $k=0$, the estimate \eqref{eq:102} follows directly from \eqref{eq:101}.
We now consider $k\geq1$.

For each $m=0,\cdots,k-1$, there are $\pd_v^mg\in C^{0,1}(V\times W)$ and
$(\pd_v^mg)(v,0)=0$.
Applying the conclusion  \textup{(1)} to $\pd_v^mg$, we have
\begin{equation*}
  \bigl\Vert \pd_v^mg(\cdot,w)|w|^{\beta-1}\bigr\Vert_{0;V}
  \leq [\pd_v^mg]_{0,1}|w|^{\beta},
\end{equation*}
it yields
\begin{equation}
  \label{eq:103}
  \bigl\Vert \pd_v^mg(\cdot,w)|w|^{\beta-\alpha}\bigr\Vert_{0;V}
  \leq [\pd_v^mg]_{0,1}|w|^{\beta+1-\alpha}
  \leq \Vert\pd_v^{m+1}g\Vert_{0}(\DIAM(W)+1).
\end{equation}

Noting $\pd_v^kg\in C^{\alpha}(V\times W)$ and
$(\pd_v^kg)(v,0)=0$,
by the conclusion \textup{(1)}, we have
\begin{equation}
  \label{eq:104}
  \bigl\Vert \pd_v^kg(\cdot,w)|w|^{\beta-\alpha}\bigr\Vert_{0;V}
  \leq [\pd_v^kg]_{\alpha}|w|^{\beta}
  \leq [\pd_v^kg]_{\alpha}(\DIAM(W)+1),
\end{equation}
and
\begin{equation}
  \label{eq:105}
  [\pd_v^kg(\cdot,w)|w|^{\beta-\alpha} ]_{\beta;V}
  \leq 2[\pd_v^kg]_{\alpha}.
\end{equation}

Now combining \eqref{eq:103}, \eqref{eq:104} and \eqref{eq:105}, we obtain
\eqref{eq:102}.
The proof of this lemma is completed.
\end{proof}

Using the above lemmas, it is essential to establish the following result.

\begin{lemma}\label{lem117}
Let $k\geq0$, $0\leq\gamma<\alpha\leq1$, and $\rho\in C^{k+1,\alpha}(\uS)$.
Assume $\varphi\in C^{k,\alpha}(\uS\times\pd\Omega\times\pd\Omega)$ satisfies
\begin{equation}\label{eq:164}
  |\varphi(\varsigma,y,z)-\varphi(\tilde{\varsigma},y,z)|
  \leq c_{\varphi}|\varsigma-\tilde{\varsigma}|,
  \quad
  \forall\, \varsigma,\tilde{\varsigma}\in\uS, \, y,z\in\pd\Omega
\end{equation}
for a positive constant $c_{\varphi}$.
If for any $z\in\pd\Omega$,
\begin{equation} \label{eq:117}
  \lim_{\pd\Omega\ni y\to z} \varphi\bigl( \tfrac{y-z}{|y-z|},y,z \bigr) =0,
\end{equation}
then
\begin{equation}\label{Ig}
  \mathcal{I}_{\gamma}(z)
  :=\int_{\pd\Omega}
  |y-z|^{1-n-\gamma}
  \varphi\bigl( \tfrac{y-z}{|y-z|},y,z \bigr)
  \dd y
\end{equation}
is $C^{k,\beta}(\pd\Omega)$ for any $0<\beta<\alpha-\gamma$, and
\begin{equation}\label{eq:107}
  \norm{\mathcal{I}_{\gamma}}_{k,\beta} \leq
  C(n, \alpha-\gamma-\beta, k, \norm{\varphi}_{k,\alpha}, c_{\varphi},
  \norm{\rho}_{k+1,\alpha}, 1/{\rho_{\min}})
\end{equation}
for some positive constants $C$ depending only on these quantities listed above.
\end{lemma}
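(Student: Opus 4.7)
The plan is to combine the polar substitution used in the proof of Lemma \ref{lem1205} with the factorization result of Lemma \ref{lem031}, using the vanishing hypothesis \eqref{eq:117} to absorb the extra singular factor $|y-z|^{-\gamma}$. Writing $z = \rho(\bar z)\bar z$ with $\bar z = z/|z|\in\uS$, substituting $y = \vec r(u)$ with $u = \bar z\cos\theta + v\sin\theta$ ($v\perp\bar z$, $\theta\in[0,\pi]$), and recalling the separation $y-z=\theta\,\xi(\bar z,v,\theta)$ from \eqref{xi}, one obtains
\begin{equation*}
  \mathcal{I}_\gamma(\bar z)
  =\int_0^\pi \theta^{1-n-\gamma}\sin^{n-2}\theta\,d\theta
   \int_{v\perp\bar z} g(\bar z,v,\theta)\,dv,
\end{equation*}
where
\begin{equation*}
  g(\bar z,v,\theta)
  =|\xi|^{1-n-\gamma}\,
   \varphi\bigl(\tfrac{\xi}{|\xi|},\vec r(u),\vec r(\bar z)\bigr)\,
   \rho(u)^{n-2}\,|\delbar\rho(u)|.
\end{equation*}
By the two-sided bound \eqref{eq:124} on $|\xi|$, the hypotheses $\rho\in C^{k+1,\alpha}$, $\varphi\in C^{k,\alpha}$ and \eqref{eq:164} jointly yield that $g$ is $C^{k,\alpha}$ in $(\bar z,v,\theta)$.

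The decisive observation is that $g(\bar z,v,0)=0$. From \eqref{xi} and \eqref{eq:111},
\begin{equation*}
  \xi(\bar z,v,0)=\nabla\vec r(\bar z)v=\rho(\bar z)v+\langle\nabla\rho(\bar z),v\rangle\bar z,
\end{equation*}
which is a nonzero vector tangent to $\pd\Omega$ at $\vec r(\bar z)$. Moreover, $\xi(\bar z,v,0)/|\xi(\bar z,v,0)|$ coincides with $\lim_{\theta\to 0^+}(y-z)/|y-z|$ taken along the curve $t\mapsto\vec r(\bar z\cos t+v\sin t)\subset\pd\Omega$, so the hypothesis \eqref{eq:117} forces the $\varphi$-factor, and thereby $g$, to vanish at $\theta=0$. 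Consequently, Lemma \ref{lem031}(2) applied with the parameter space $V$ of pairs $(\bar z,v)$ and $W=[0,\pi]$ produces
\begin{equation*}
  \bigl\Vert g(\cdot,\cdot,\theta)\,\theta^{\beta-\alpha}\bigr\Vert_{k,\beta;V}
  \leq C\,\norm{g}_{k,\alpha},
  \quad\forall\,\theta\in(0,\pi],
\end{equation*}
with the estimate uniform in $\theta$.

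Pulling the factor $\theta^{\beta-\alpha}$ out of the inner integral and absorbing it into the outer one, we rewrite
\begin{equation*}
  \mathcal{I}_\gamma(\bar z)
  =\int_0^\pi \theta^{1-n-\gamma+\alpha-\beta}\sin^{n-2}\theta\,d\theta
   \int_{v\perp\bar z}\bigl[g(\bar z,v,\theta)\,\theta^{\beta-\alpha}\bigr]\,dv.
\end{equation*}
Near $\theta=0$ the outer integrand behaves like $\theta^{\alpha-\gamma-\beta-1}$, hence is integrable precisely under the standing restriction $\beta<\alpha-\gamma$. To deal with the $\bar z$-dependent integration domain $\{v\perp\bar z\}$, we reuse the local orthogonal substitution $v=Q(\bar z)\eta$, $Q(\bar z)^T\bar z=e_n$, from the proof of Lemma \ref{lem1205}, which transplants the integration onto the fixed equator $\{\eta\perp e_n\}$ while keeping the integrand $C^{k,\beta}$ in $\bar z$. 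Standard differentiation under the integral sign then delivers $\mathcal{I}_\gamma\in C^{k,\beta}(\pd\Omega)$ together with the quantitative bound \eqref{eq:107}.

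The main obstacle is the rigorous identification at $\theta=0$: one must justify that $\xi(\bar z,v,0)/|\xi(\bar z,v,0)|$ is really the tangent limit covered by \eqref{eq:117} (using $\rho\in C^{k+1,\alpha}\subset C^{1,\alpha}$ to guarantee the tangent plane is well defined), and confirm that the composition $\varphi(\xi/|\xi|,\vec r(u),\vec r(\bar z))$ remains $C^{k,\alpha}$ in the joint variables despite the intricate dependence of $\xi/|\xi|$ on $(\bar z,v,\theta)$. Once these verifications are in place, the integrability threshold $\beta<\alpha-\gamma$ is the only place where the Hölder exponent gets eroded, and everything else reduces to direct applications of Lemma \ref{lem031} and routine bookkeeping already worked out in Lemma \ref{lem1205}.
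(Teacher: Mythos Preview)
Your proposal is correct and follows essentially the same route as the paper: the polar substitution from Lemma \ref{lem1205}, the identification $\xi/|\xi|=(y-z)/|y-z|$ so that \eqref{eq:117} forces the integrand to vanish at $\theta=0$, and then Lemma \ref{lem031} to trade the vanishing for the factor $\theta^{\alpha-\beta}$ that makes the $\theta$-integral converge when $\beta<\alpha-\gamma$. The only cosmetic differences are that the paper absorbs $\theta^{\beta-\alpha}$ into the definition of $g$ from the start and applies Lemma \ref{lem031} to the $\varphi$-factor alone rather than to the full $g$; also, the orthogonal-matrix trick you invoke actually appears in the proof of Lemma \ref{lem1211}, not Lemma \ref{lem1205}, though its use here is perfectly valid.
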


\begin{proof}{}
Utilizing the same arguments as in the proof of Lemma \ref{lem1205}, we have
\begin{equation*}
  \begin{split}
    \mathcal{I}_{\gamma}(\bar{z}) := \mathcal{I}_{\gamma}(z)
    &=\int_{\uS} |\tau|^{1-n-\gamma}
    \varphi\bigl( \tfrac{\tau}{|\tau|},\vec{r}(u),\vec{r}(\bar{z}) \bigr)
    \rho(u)^{n-2} |\delbar\rho(u)| \dd u \\
    &= \int_0^{\pi} \theta^{1-n-\gamma} \sin^{n-2}\theta \dd\theta
    \int_{v\bot \bar{z}} |\xi|^{1-n-\gamma}
    \varphi\bigl( \tfrac{\xi}{|\xi|},\vec{r}(u),\vec{r}(\bar{z}) \bigr)
    \rho(u)^{n-2} |\delbar\rho(u)| \dd v \\
    &= \int_0^{\pi} \theta^{1-n+\alpha-\gamma-\beta} \sin^{n-2}\theta \dd\theta
    \int_{v\bot \bar{z}} g(\bar{z},v,\theta) \dd v,
  \end{split}
\end{equation*}
where
\begin{equation}\label{eq:112}
  g(\bar{z},v,\theta)=
  |\xi|^{1-n-\gamma}
  \varphi\bigl( \tfrac{\xi}{|\xi|},\vec{r}(u),\vec{r}(\bar{z}) \bigr) \theta^{\beta-\alpha}
  \cdot
  \rho(u)^{n-2} |\delbar\rho(u)|.
\end{equation}
By estimate \eqref{eq:124} and the assumptions on $\varphi$, one sees that
$\varphi\bigl( \tfrac{\xi}{|\xi|},\vec{r}(u),\vec{r}(\bar{z}) \bigr)$
is $C^{k,\alpha}$ with respect to $(\bar{z},v,\theta)$.
Observe that the assumption \eqref{eq:117} leads to
\begin{equation*}
  \varphi\bigl( \tfrac{\xi}{|\xi|},\vec{r}(u),\vec{r}(\bar{z}) \bigr) \big|_{\theta=0}
  =0.
\end{equation*}
Then, by virtue of Lemma \ref{lem031}, for any
$\theta\in(0,\pi]$ and $\beta\in(0,\alpha-\gamma)$, we have that
$\varphi\bigl( \tfrac{\xi}{|\xi|},\vec{r}(u),\vec{r}(\bar{z}) \bigr)
\theta^{\beta-\alpha}$ is $C^{k,\beta}$
with respect to $(\bar{z},v)$, and that
\begin{equation}\label{eq:108}
  \bigl\Vert
  \varphi\bigl( \tfrac{\xi}{|\xi|},\vec{r}(u),\vec{r}(\bar{z}) \bigr)
  \theta^{\beta-\alpha}
  \bigr\Vert_{k,\beta}
  \leq C_1,
\end{equation}
where $C_1$ is a positive constant depending only on $n$, $k$,
$\norm{\varphi}_{k,\alpha}$, $c_{\varphi}$, $\norm{\rho}_{k+1,\alpha}$ and $1/\rho_{\min}$.
Thus, $g(\bar{z},v,\theta)$ is $C^{k,\beta}$ with respect to $(\bar{z},v)$ and
the corresponding norms are uniformly bounded for $\theta\in(0,\pi]$.
Since $\beta\in(0,\alpha-\gamma)$, $\alpha-\gamma-\beta>0$.
Therefore, $\mathcal{I}_{\gamma}(\bar{z})\in C^{k,\beta}$ and its norm satisfies
the estimate \eqref{eq:107}.
This completes the proof of this lemma.
\end{proof}

 Lemma \ref{lem117} enables us to reach our goal. We now derive the following lemma.

\begin{lemma}\label{lemNa<=1}
Let $0<a\leq1$, $k\geq0$, and $1-a<\alpha\leq1$.
Assume $\rho\in C^{k+1,\alpha}(\uS)$,
$\omega\in C^{k+1,\alpha}(\uS)\cap C^{1,1}(\uS)$, and
$\phi\in C^{k+1,\alpha}(\Omega)$.
Then $N_a$ is $C^{k+1,\beta}(\pd\Omega)$ for any $0<\beta<\alpha+a-1$, and there holds
\begin{equation}\label{eq:95}
  \norm{N_a}_{k+1,\beta} \leq
  C(n, a, \alpha+a-1-\beta, k, \norm{\omega}_{k+1,\alpha}, \norm{\omega}_{1,1},
  \norm{\phi}_{k+1,\alpha}, \norm{\rho}_{k+1,\alpha}, 1/{\rho_{\min}})
\end{equation}
for some positive constants $C$ depending only on these quantities listed above.
\end{lemma}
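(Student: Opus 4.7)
The plan mirrors the $a>1$ argument carried out in Lemma \ref{lemNa>1}. First, I would invoke Lemma \ref{lem1231}, which is available because the assumptions imply $\rho\in C^{1,\alpha}$ with $\alpha>1-a$, to conclude $N_a\in C^1(\pd\Omega)$ and to obtain the explicit representation \eqref{eq:163} for $\nabla_iN_a$. The kernel $|y-z|^{a-n}$ appearing there is more singular than the boundary measure can absorb directly, so Lemma \ref{lem1207} (which handled the $a>1$ case) is no longer applicable; the natural replacement is Lemma \ref{lem117}, which is tailored precisely to integrands of the form $|y-z|^{1-n-\gamma}\varphi$ with $\varphi$ vanishing as $y\to z$.

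Next, I would rewrite each of the four summands in \eqref{eq:163} in the form $\int_{\pd\Omega}|y-z|^{1-n-\gamma}\varphi_\ell\bigl(\tfrac{y-z}{|y-z|},y,z\bigr)\dd y$ with $\gamma:=1-a$, taking
\begin{align*}
\varphi_1(\varsigma,y,z) &= (n-a)\langle z_i,\varsigma\rangle\langle\varsigma,\nu_y\rangle\omega(\varsigma)\phi_a(y,z),\\
\varphi_2(\varsigma,y,z) &= -\langle z_i,\nu_y\rangle\omega(\varsigma)\phi_a(y,z),\\
\varphi_3(\varsigma,y,z) &= -\langle\varsigma,\nu_y\rangle\langle z_i,\nabla\omega(\varsigma)\rangle\phi_a(y,z),\\
\varphi_4(\varsigma,y,z) &= \langle y-z,\nu_y\rangle\omega(\varsigma)\nabla_{z_i}\phi_a(y,z).
\end{align*}
I would then verify the vanishing hypothesis \eqref{eq:117} for each: as $y\to z$ along $\pd\Omega$ the unit chord direction $(y-z)/|y-z|$ tends to a tangent vector at $z$, so $\langle\varsigma,\nu_y\rangle\to 0$ in $\varphi_1$ and $\varphi_3$; the factor $\langle z_i,\nu_y\rangle\to\langle z_i,\nu_z\rangle=0$ in $\varphi_2$ since $z_i$ is tangent at $z$; and $\langle y-z,\nu_y\rangle\to 0$ trivially in $\varphi_4$. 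For the regularity requirement, the hypotheses $\rho\in C^{k+1,\alpha}$, $\omega\in C^{k+1,\alpha}$, $\phi\in C^{k+1,\alpha}$ make $\nu_y$, $\omega$, $\nabla\omega$, $\phi_a$, $\nabla_{z_i}\phi_a$ of class $C^{k,\alpha}$ in their respective variables, while the $C^{1,1}$ control on $\omega$ provides the Lipschitz estimate \eqref{eq:164} (needed because $\nabla\omega$ enters $\varphi_3$). Lemma \ref{lem117} with $\gamma=1-a$, for which $\alpha-\gamma=\alpha+a-1>0$, then yields $\nabla_iN_a\in C^{k,\beta}(\pd\Omega)$ for every $\beta\in(0,\alpha+a-1)$ together with the bound \eqref{eq:107}.

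Assembling these pieces gives $N_a\in C^{k+1,\beta}(\pd\Omega)$, and combining the derivative bound with the $C^0$ estimate \eqref{Na-C0} from Lemma \ref{lem1128} produces the quantitative estimate \eqref{eq:95}. The only genuine obstacle is verifying that the four integrands really exhibit the cancellation demanded by Lemma \ref{lem117}; this rests on the tangency of $(y-z)/|y-z|$ to $\pd\Omega$ in the limit and on the quantitative first-order boundary behaviour captured in Lemma \ref{lem1213}, which is exactly what the assumption $\alpha>1-a$ is designed to deliver and what determines the sharp H\"older exponent $\alpha+a-1$ in the conclusion.
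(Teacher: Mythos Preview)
Your proposal is correct and follows essentially the same route as the paper: invoke Lemma \ref{lem1231} for the explicit first derivative, recognize each term as an $\mathcal{I}_\gamma$ with $\gamma=1-a$, verify the vanishing hypothesis \eqref{eq:117} via Lemma \ref{lem1213} (and tangency of $z_i$), check \eqref{eq:164} using $\omega\in C^{1,1}$, then apply Lemma \ref{lem117} and combine with Lemma \ref{lem1128}. The only cosmetic difference is that the paper bundles the four summands into a single $\varphi$ rather than treating them as separate $\varphi_\ell$.
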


\begin{proof}{}
By Lemma \ref{lem1231}, $N_a\in C^1(\pd\Omega)$, and there is
\begin{equation*}
  \nabla_iN_a(z)
  =\int_{\pd\Omega} |y-z|^{1-n-(1-a)}
  \varphi\bigl( \tfrac{y-z}{|y-z|},y,z \bigr)
  \dd y,
\end{equation*}
where
\begin{multline*}
  \varphi(\varsigma,y,z)=
  (n-a) \langle z_i,\varsigma \rangle \langle \varsigma,\nu_y \rangle \omega(\varsigma) \phi_a
  -\langle z_i,\nu_y \rangle \omega(\varsigma) \phi_a \\
  -\langle \varsigma,\nu_y \rangle \langle z_i,\nabla\omega(\varsigma) \rangle \phi_a
  +\langle y-z,\nu_y \rangle \omega(\varsigma) \nabla_{z_i} \phi_a.
\end{multline*}
Recalling \eqref{Ig}, then we see that $\nabla_i N_a(z)=\mathcal{I}_{\gamma}(z)$
with $\gamma=1-a<\alpha$.

By the assumptions of this lemma, $\varphi\in C^{k,\alpha}$ and satisfies
\eqref{eq:164} with $c_{\varphi}$ depending only on $n$, $a$, $\norm{\omega}_{1,1}$,
$\norm{\phi}_1$, $\norm{\rho}_1$ and $1/\rho_{\min}$.
Recalling Lemma \ref{lem1213}, we have
\begin{equation*}
  \lim_{\pd\Omega\ni y\to z} \Bigl\langle \frac{y-z}{|y-z|},\nu_y \Bigr\rangle =0,
\end{equation*}
implying
\begin{equation*}
  \lim_{\pd\Omega\ni y\to z} \varphi\bigl( \tfrac{y-z}{|y-z|},y,z \bigr) =0.
\end{equation*}
Now, we have already verified all assumptions of Lemma \ref{lem117},
whose conclusion says that
$\nabla_i N_a$ is $C^{k,\beta}(\pd\Omega)$ for any $0<\beta<\alpha+a-1$, and
\begin{equation*}
  \norm{\nabla_iN_a}_{k,\beta} \leq
  C(n, \alpha+a-1-\beta, k, \norm{\varphi}_{k,\alpha}, c_{\varphi},
  \norm{\rho}_{k+1,\alpha}, 1/{\rho_{\min}}).
\end{equation*}
Hence, $N_a\in C^{k+1,\beta}(\pd\Omega)$, and then together with Lemma \ref{lem1128},
we have the desired estimate \eqref{eq:95} of this lemma.
\end{proof}
\begin{proof}[Proof of Theorem \ref{thm1}]
Theorem \ref{thm1} is a direct consequence of Lemmas \ref{lemNa>1} and \ref{lemNa<=1}.
\end{proof}
\begin{remark}\label{rmk0107}
In Lemmas \ref{lem1205} and \ref{lem117}, the assumptions \eqref{eq:110} and
\eqref{eq:164} are only used to show that $\varphi$ in \eqref{eq:125} and
\eqref{eq:112} is $C^{\alpha}$ with respect to $(\bar{z},v,\theta)$ when $k=0$.
If they are removed, we can see that $\varphi$ is $C^{\tilde{\alpha}}$ for some
$\tilde{\alpha}\leq \alpha$.
The other parts of the whole arguments are not affected.
Therefore, the conclusions of Theorem \ref{thm1} are still true, except that we
may obtain a smaller H\"older exponent.
However, in the case when $a\geq2$, the $C^{\alpha}$ regularity of $\nabla N_a$ can be
derived by Lemma \ref{lem601}, rather than Lemma \ref{lem1205}, indicating that the
H\"older exponent remains at $\alpha$.
\end{remark}

Building upon Theorem \ref{thm2}, we can prove Theorem \ref{thm3}.

\begin{proof}[Proof of Theorem \ref{thm3}]   Let $\mu$ be a finite Borel measure on $\sn$ satisfying
\[
\frac{2q}{(n+q-1)\omega_{n}}\int_{\nu^{-1}_{\Omega}(\eta)}\langle z, \nu_{\Omega}(z)\rangle\widetilde{V}_{q-1}(\Omega,z)\dd \mathcal{H}^{n+1}(z)=\int_{\eta}\dd \mu
\]
for every Borel set $\eta\subset \sn$ and $1\leq q \leq n+1$. Assume that $\dd\mu=\frac{2q}{(n+q-1)\omega_{n}}f(x)\dd x$ for some positive function $f\in C^{k,\alpha}(\sn)$ with $0<\alpha<1$. Let $\phi$ be a convex, Lipshitz function defined on an open subset $O$ of $\rt$, whose graph $\{(\vartheta,\phi(\vartheta)):\vartheta\in O\}$ is a part of $\partial \Omega$. Then $\phi$ satisfies the {M}onge-{A}mp\`ere equation
\begin{equation}\label{M2}
\det(\phi_{ij}(\vartheta))=(1+|D \phi(\vartheta)|^{2})^{(n+1)/2}\langle(\vartheta,\phi(\vartheta)), x\rangle\frac{\widetilde{V}_{q-1}(\Omega,(\vartheta,\phi(\vartheta)))}{f(x)},
\end{equation}
in the sense of Aleksandrov, where
\[
x=\frac{(D\phi(\vartheta),-1)}{\sqrt{1+|D\phi(\vartheta)|^{2}}}.
\]
In view of \eqref{M2}, applying  \cite[Theorem 1.2]{LXYZ} and \cite[Theorem 1.3]{GXZ23} with the assumption
that $f$ is bounded above and below by a positive constant, by Caffarelli's
result \cite{Ca901}, we conclude that $\partial \Omega$ is $C^{1,\alpha_{1}}$
and obtain $C^{1,\alpha_{1}}$ estimate of Aleksandrov solution for $0<\alpha_{1}<1$.  Now, assume
that $f\in C^{\alpha}(\sn)$ for $0<\alpha<1$ and $f>0$, since we have revealed that $\Omega$ is a
$C^{1,\alpha_{1}}$ domain, by Theorem \ref{thm2}, one can see that
$\widetilde{V}_{q-1}(\cdot)$ is bounded above and below by a positive constant, which belongs to $C^{1,\alpha_{1}}$ for $q>2$ and $C^{\alpha_{1}}$ for $1< q \leq2$, then the r.h.s of \eqref{M2} belongs to
$C^{\alpha_{3}}$ for $\alpha_{3}>0$, so by Caffarelli \cite{Ca90}, $\phi \in C^{2,\alpha_{3}}$ for which
illustrates that $\widetilde{V}_{q-1}(\cdot)$ belongs to $C^{2,\alpha_{3}}$ for $q>2$ and belongs to $C^{1,\alpha_{3}}$ for $1<q\leq 2$.
The remainder of the regularity estimates of Theorem \ref{thm3} now follows from
\cite{Ca89,Ca90,Ca901}.

\end{proof}

\section{Nonlocal flow and related functionals} \label{sec4}

As stated in the introduction,  no existence results are currently available for generalized
solutions to the chord log-Minkowski problem \eqref{CLMP} when $q>n+1$.
In this case, based on Theorem \ref{thm2}, we instead study the convergence of the nonlocal geometric flow
\eqref{flow} to prove the existence of smooth solutions to \eqref{CLMP}.
This method actually works for every $q>3$.
Therefore, we always assume $q>3$ in the following sections of this paper, unless
otherwise specified.

Let $\Omega_0$ be a smooth, origin-symmetric, uniformly convex body in $\R^{n}$,
whose boundary $\pd\Omega_0$ is given by
$X_{0}: \mathbb{S}^{n-1} \rightarrow \R^{n}$,
and the chord integral $I_q(\Omega_0)$ satisfies $I_{q}(\Omega_0)
  =\frac{2q\int_{\uS}f\dd x}{(q+n-1)\omega_{n}}$.
We consider a family of closed convex hypersurfaces $\set{\pd\Omega_t}$ parameterized by
$\pd\Omega_t=X(\uS,t)$, where $X: \mathbb{S}^{n-1}\times[0,T) \rightarrow \R^{n}$ is a
smooth map satisfying the flow equation reflected in \eqref{flow} as:
\begin{equation*}
  \begin{split}
    \left\{
\begin{array}{lr}
    \frac{\pd X}{\pd t} (x,t)
    = -\frac{f(\nu) \kappa}{\widetilde{V}_{q-1}(\Omega_t,X)} \nu +  X,\\
    X(x,0)= X_{0}(x).
     \end{array}\right.
  \end{split}
\end{equation*}
Since we require $\pd\Omega_t$ to be strictly convex, by a simple computation, one can see
that this flow equation is equivalent to the following evolution equation:
\begin{equation}\label{flow1}
  \begin{split}
  \left\{
\begin{array}{lr}
    \frac{\pd h}{\pd t} (x,t)
    = -\frac{f(x) \kappa}{\widetilde{V}_{q-1}(\Omega_t,\delbar h)} +h(x,t), \\
    h(x,0)= h_{0}(x),
      \end{array}\right.
  \end{split}
\end{equation}
where $h(\cdot,t)$ is the support function of $\Omega_t$.
From now on, we will mainly study the flow equation \eqref{flow1}, which involves the
nonlocal dual quermassintegral $\widetilde{V}_{q-1}(\Omega_t,\delbar h)$.
Due to $f(-x)=f(x)$, the origin-symmetry of $\Omega_0$, and
\begin{equation*}
\widetilde{V}_{q-1}(\Omega,-z) = \widetilde{V}_{q-1}(\Omega,z)
\end{equation*}
for any origin-symmetric $\Omega$ and $z\in\pd\Omega$,
$\Omega_t$ is origin-symmetric along the flow.
Denote by $\rho(\cdot,t)$ the radial function of $\Omega_t$.


Note that the short-time existence of a solution to \eqref{flow1} with a global term can be proved by a fixed point argument similar to that in \cite{Ge06,Mak13}. We now first note a simple but crucial property about the chord integral
$I_q(\Omega_t)$ along the flow  \eqref{flow1}.

\begin{lemma} \label{lem004}
Along the flow \eqref{flow1}. Then the chord integral $I_q(\Omega_t)$ can be computed by
\begin{equation}
  \label{eq:26}
  I_q(\Omega_t) =
  \frac{2q \int_{\uS}f \dd x}{(q+n-1)\omega_{n}}
  +
  \left(
    I_q(\Omega_0)
    -\frac{2q \int_{\uS}f \dd x}{(q+n-1)\omega_{n}}
  \right)
  e^{(q+n-1)t}.
\end{equation}
Therefore, if we choose $\Omega_0$ such that
\begin{equation*}
  I_q(\Omega_0)
  =\frac{2q \int_{\uS}f \dd x}{(q+n-1)\omega_{n}},
\end{equation*}
then $I_q(\Omega_t)$ remains constant, namely
\begin{equation*}
  I_q(\Omega_t)\equiv I_q(\Omega_0),
  \quad
  \forall t\in[0,T).
\end{equation*}
\end{lemma}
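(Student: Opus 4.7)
The plan is to derive a linear ODE for $I_q(\Omega_t)$ and integrate it explicitly. The key tool is the logarithmic variational formula \eqref{C1*} for the chord integral, which converts the time derivative of $I_q(\Omega_t)$ into an integral against the cone-chord measure $G_q(\Omega_t,\cdot)$.

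First, from the flow equation \eqref{flow1}, I will compute the logarithmic rate of change of the support function:
\begin{equation*}
  \partial_t \log h(x,t)
  = \frac{\partial_t h}{h}
  = -\frac{f(x)\kappa}{h(x,t)\,\widetilde{V}_{q-1}(\Omega_t,\overline{\nabla} h)} + 1.
\end{equation*}
Applying \eqref{C1*} with this $\phi$, and using the explicit density
\begin{equation*}
  \dd G_q(\Omega_t,\cdot)
  = \tfrac{2q}{(n+q-1)\omega_n}\, h(x,t)\,\widetilde{V}_{q-1}(\Omega_t,\overline{\nabla} h)\,\det(\nabla^2 h+hI)\,\dd x,
\end{equation*}
coming from $\dd G_q = \frac{1}{n+q-1}h\,\dd F_q$ and the smooth form of $F_q$, I obtain
\begin{equation*}
  \frac{d}{dt} I_q(\Omega_t)
  = (n+q-1)\int_{\uS}\!\!\left(-\frac{f\kappa}{h\widetilde{V}_{q-1}} + 1\right)\!\dd G_q(\Omega_t,\cdot).
\end{equation*}

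Second, substituting $\kappa=1/\det(\nabla^2 h+hI)$ collapses the first term into a constant, and the second term, by $I_q(\Omega_t)=\int_{\uS}\dd G_q(\Omega_t,\cdot)$, gives back $(n+q-1)I_q(\Omega_t)$. Concretely, I expect
\begin{equation*}
  \frac{d}{dt} I_q(\Omega_t)
  = -\frac{2q}{\omega_n}\int_{\uS}f\,\dd x + (n+q-1)I_q(\Omega_t).
\end{equation*}

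Third, this is a linear first-order ODE with constant forcing. Its equilibrium is exactly $I_q^{*}:=\frac{2q\int_{\uS}f\,\dd x}{(n+q-1)\omega_n}$, so the general solution is
\begin{equation*}
  I_q(\Omega_t) = I_q^{*} + (I_q(\Omega_0)-I_q^{*})e^{(n+q-1)t},
\end{equation*}
which is precisely \eqref{eq:26}. The constancy statement is then immediate: if $I_q(\Omega_0)=I_q^{*}$, the exponential term vanishes.

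The only nontrivial step is the identification of the rate $\phi$ and the careful use of the logarithmic Wulff-shape formula \eqref{C1*} (as opposed to the linear one \eqref{Ide}); once that correspondence is set up, everything reduces to the algebraic cancellation $\kappa\det(\nabla^2 h+hI)=1$ and the tautology $I_q=\int_{\uS}\dd G_q$. Smoothness of $\pd\Omega_t$ along the flow, needed to justify \eqref{C1*} pointwise in $t$, is guaranteed by the short-time existence Lemma~\ref{erq}.
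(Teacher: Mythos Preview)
Your proof is correct and follows essentially the same approach as the paper: derive the linear ODE $\frac{d}{dt}I_q(\Omega_t)=(q+n-1)I_q(\Omega_t)-\frac{2q}{\omega_n}\int_{\uS}f\,\dd x$ and integrate. The only cosmetic difference is that the paper applies the \emph{linear} variational formula \eqref{Ide} directly with $\phi=\partial_t h$ and then invokes \eqref{c-i-formula}, whereas you go through the logarithmic formula \eqref{C1*} with $\phi=\partial_t\log h$ and the identity $I_q=\int_{\uS}\dd G_q$; since $\dd G_q=\tfrac{1}{n+q-1}h\,\dd F_q$, the two computations are line-by-line equivalent.
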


\begin{proof}{}
By virtue of \cite[Lemma 5.5]{LXYZ}, we have
\begin{equation}\label{eq:27}
  \begin{split}
    \frac{\dd}{\dd t} I_q(\Omega_t)
    &=
    \frac{2q}{\omega_n}
    \int_{\uS} \pd_th(x,t) \widetilde{V}_{q-1}(\Omega_t,\delbar h) \kappa^{-1} \dd x \\
    &=
    \frac{2q}{\omega_n}
    \int_{\uS} h(x,t) \widetilde{V}_{q-1}(\Omega_t,\delbar h) \kappa^{-1} \dd x
    -
    \frac{2q}{\omega_n} \int_{\uS}f \dd x \\
    &=
    (q+n-1) I_q(\Omega_t)
    -
    \frac{2q \int_{\uS}f \dd x}{\omega_n},
  \end{split}
\end{equation}
where the second equality holds due to the flow \eqref{flow1}, and the third equality holds due
to formula \eqref{c-i-formula}.
By attacking the linear ordinary differential equation \eqref{eq:27}, one will gain
\eqref{eq:26}, thus completing the proof of this lemma.
\end{proof}

Another important functional related to the flow \eqref{flow1} is the following
\begin{equation}\label{Jt}
  J(t)=\int_{\uS} f(x) \log h(x,t) \dd x,
\end{equation}
which will be equipped with the monotonicity along the flow \eqref{flow1}. It is shown in the following lemma.
\begin{lemma} \label{lem005}
Along the flow \eqref{flow1}, then $J(t)$ is non-increasing.
\end{lemma}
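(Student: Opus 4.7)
The plan is to show $J'(t) \leq 0$ by a direct differentiation of $J$, followed by a single application of the Cauchy--Schwarz inequality, where the constraint $I_q(\Omega_t)\equiv I_q(\Omega_0)$ from Lemma \ref{lem004} plays the decisive role.

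First, I differentiate under the integral sign using the flow equation \eqref{flow1}:
\begin{equation*}
  J'(t)
  = \int_{\uS} f(x)\frac{\pd_th(x,t)}{h(x,t)} \dd x
  = -\int_{\uS} \frac{f(x)^{2}\,\kappa}{h(x,t)\,\widetilde{V}_{q-1}(\Omega_t,\delbar h)} \dd x
  + \int_{\uS} f(x) \dd x.
\end{equation*}
So proving $J'(t)\leq 0$ is equivalent to establishing the pointwise-in-$t$ inequality
\begin{equation*}
  \int_{\uS} \frac{f^{2}\,\kappa}{h\,\widetilde{V}_{q-1}(\Omega_t,\delbar h)} \dd x
  \;\geq\;
  \int_{\uS} f \dd x.
\end{equation*}

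Next, I would apply Cauchy--Schwarz by splitting the integrand $f = \sqrt{f^{2}\kappa/(h\widetilde{V}_{q-1})}\cdot\sqrt{h\widetilde{V}_{q-1}/\kappa}$:
\begin{equation*}
  \left(\int_{\uS} f \dd x\right)^{2}
  \leq
  \left(\int_{\uS} \frac{f^{2}\kappa}{h\widetilde{V}_{q-1}} \dd x\right)
  \left(\int_{\uS} h\,\widetilde{V}_{q-1}(\Omega_t,\delbar h)\,\kappa^{-1} \dd x\right).
\end{equation*}
The second factor is precisely $\frac{(q+n-1)\omega_n}{2q} I_q(\Omega_t)$, by the identity \eqref{c-i-formula} combined with the expression $\dd F_q(\Omega_t,x) = \frac{2q}{\omega_n}\widetilde{V}_{q-1}(\Omega_t,\delbar h)\kappa^{-1}\dd x$ on $\uS$ for a smooth strictly convex $\Omega_t$.

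Finally, I invoke Lemma \ref{lem004}: since $\Omega_0$ has been chosen so that $I_q(\Omega_0) = \frac{2q\int_{\uS}f\dd x}{(q+n-1)\omega_n}$, we have $I_q(\Omega_t) \equiv \frac{2q\int_{\uS}f\dd x}{(q+n-1)\omega_n}$ for all $t\in[0,T)$. Substituting this back cancels all prefactors and yields exactly
\begin{equation*}
  \int_{\uS} \frac{f^{2}\kappa}{h\widetilde{V}_{q-1}}\dd x
  \geq \frac{\left(\int_{\uS} f\dd x\right)^{2}}{\int_{\uS} f\dd x}
  = \int_{\uS} f\dd x,
\end{equation*}
so $J'(t)\leq 0$ as claimed. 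There is no serious obstacle here: the only thing to watch is that differentiation under the integral sign is legitimate, which follows from the short-time smoothness provided by Lemma \ref{erq} together with the boundary regularity of $\widetilde{V}_{q-1}$ given by Theorem \ref{thm2}; once that is granted the argument reduces to Cauchy--Schwarz plus the conservation law of Lemma \ref{lem004}.
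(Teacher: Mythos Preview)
Your proof is correct and follows essentially the same route as the paper: differentiate $J$ using the flow equation, apply Cauchy--Schwarz (the paper calls it H\"older) with the same splitting of $f$, and then use \eqref{c-i-formula} together with Lemma~\ref{lem004} to identify the second factor as $\int_{\uS} f\,\dd x$. The only additional content you supply is the remark on differentiation under the integral sign, which the paper leaves implicit.
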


\begin{proof}{}
By a direct calculation, and utilizing \eqref{flow1}, we get
\begin{equation} \label{eq:28}
  \begin{split}
    J'(t)
    &=
    \int_{\uS} \frac{f(x)}{h(x,t)} \pd_th(x,t) \dd x \\
    &=
    \int_{\uS} f  \dd x
    - \int_{\uS} \frac{f^2 \kappa}{h\widetilde{V}_{q-1}} \dd x.
  \end{split}
\end{equation}
By the H\"older's inequality, there is
\begin{equation}\label{eq:29}
  \begin{split}
    \left(\int_{\uS}f  \dd x\right)^2
    &=
    \left(
      \int_{\uS}
      \sqrt{\frac{h\widetilde{V}_{q-1}}{\kappa}}
      \cdot
      f
      \sqrt{\frac{\kappa}{h\widetilde{V}_{q-1}}} \dd x
    \right)^2 \\
    &\leq
    \left(
      \int_{\uS} \frac{h\widetilde{V}_{q-1}}{\kappa} \dd x
    \right)
    \cdot
    \left(
      \int_{\uS} \frac{f^2\kappa}{h\widetilde{V}_{q-1}} \dd x
    \right).
  \end{split}
\end{equation}
Recalling \eqref{c-i-formula} and Lemma \ref{lem004}, one sees
\begin{equation*}
  \int_{\uS} \frac{h\widetilde{V}_{q-1}}{\kappa} \dd x
  =
  \frac{(q+n-1)\omega_{n}}{2q}
  I_q(\Omega_t)
  =
  \int_{\uS}f \dd x,
\end{equation*}
which together with \eqref{eq:29} to imply
\begin{equation*}
  \int_{\uS}f  \dd x
  \leq
  \int_{\uS} \frac{f^2\kappa}{h\widetilde{V}_{q-1}} \dd x.
\end{equation*}
Now \eqref{eq:28} says that $J'(t)\leq0$.
The proof of this lemma is complete.
\end{proof}

With the help of the above results, now we can  prove that the support
functions $h(\cdot,t)$ have uniform positive upper and lower bounds.

\begin{lemma}\label{lem03}
Let $f$ be an even, positive, smooth function on $\sn$, and $\Omega_{t}$ be an origin symmetric, uniformly convex solution to the flow \eqref{flow}. Then there exists a positive constant $C$
independent of $t$ such that for every $t\in[0,T)$,
\begin{equation}\label{eq:30}
  1/C \leq h(\cdot,t) \leq C \text{ on }\uS.
\end{equation}
It means that
\begin{equation}\label{eq:31}
  1/C \leq \rho(\cdot,t) \leq C \text{ on }\uS.
\end{equation}
\end{lemma}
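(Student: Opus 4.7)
The plan is to combine the invariance of the chord integral $I_q(\Omega_t)$ from Lemma \ref{lem004} with the monotonicity of $J(t)=\int_\uS f\log h\,\dd x$ from Lemma \ref{lem005}, exploiting the origin-symmetry of $\Omega_t$. For an origin-symmetric convex body one has $h_{\max}=\rho_{\max}$ (circumradius) and $h_{\min}=\rho_{\min}$ (inradius), so \eqref{eq:30} implies \eqref{eq:31}; it therefore suffices to prove two-sided positive bounds on $h(\cdot,t)$. Let $R(t):=\max_\uS h(\cdot,t)=h(x_t,t)$ for some $x_t\in\uS$; by central symmetry the segment $[-R(t)x_t,R(t)x_t]\subset\Omega_t$, which yields the key pointwise inequality
\begin{equation*}
h(x,t)\ge R(t)\,|\langle x,x_t\rangle|,\quad\forall\,x\in\uS.
\end{equation*}

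For the upper bound on $R(t)$, I would take logarithms and integrate against $f$ to obtain
\begin{equation*}
J(t)\ge (\log R(t))\int_\uS f\,\dd x+\int_\uS f(x)\log|\langle x,x_t\rangle|\,\dd x.
\end{equation*}
The second integral is controlled by $\|f\|_\infty$ times the rotationally invariant $L^1(\uS)$-norm of $\log|\langle\cdot,e\rangle|$ for any fixed $e\in\uS$, so it admits a lower bound independent of $x_t$. Since $J(t)\le J(0)$ by Lemma \ref{lem005} and $\int_\uS f\,\dd x>0$, this forces $R(t)\le C_1$ uniformly in $t$.

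For the lower bound, let $r(t):=\min_\uS h(\cdot,t)=h(x_1,t)$. Origin-symmetry places $\Omega_t$ in the slab $\{y:|\langle x_1,y\rangle|\le r(t)\}$, and combined with $\Omega_t\subset B_{R(t)}\subset B_{C_1}$ one gets $|\Omega_t|\le 2\omega_{n-1}C_1^{n-1}r(t)$. Since every chord of $\Omega_t$ has length at most $2C_1$, the definition \eqref{chord} together with the identity $I_1(\Omega_t)=|\Omega_t|$ (a direct Fubini consequence of \eqref{I1}) yields
\begin{equation*}
I_q(\Omega_t)\le (2C_1)^{q-1}I_1(\Omega_t)=(2C_1)^{q-1}|\Omega_t|\le C_2\,r(t).
\end{equation*}
Because $I_q(\Omega_t)=I_q(\Omega_0)>0$ is constant by Lemma \ref{lem004}, this forces $r(t)\ge c_0>0$, which completes the proof.

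The principal challenge is the upper estimate, since $J$ is only a logarithmically-averaged quantity and one must extract from it a pointwise $L^\infty$ bound on $h$; the segment trick $h(x,t)\ge R(t)|\langle x,x_t\rangle|$, valid only because of origin-symmetry, is precisely what bridges this gap. The lower estimate is then an elementary volumetric consequence of the upper bound combined with the constancy of $I_q$.
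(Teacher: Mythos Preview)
Your upper-bound argument is essentially identical to the paper's: both use the origin-symmetry inequality $h(x,t)\ge R(t)|\langle x,x_t\rangle|$, integrate against $f\log(\cdot)$, and invoke $J(t)\le J(0)$ together with the finiteness of $\int_{\uS}\log|\langle x,e\rangle|\,\dd x$.

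For the lower bound the two proofs diverge. The paper argues by contradiction: if $\min_{\uS}h(\cdot,t_k)\to 0$ along some sequence, Blaschke selection yields a limit body $\Omega$ contained in a hyperplane, hence $I_q(\Omega)=0$; but continuity of $I_q$ and Lemma~\ref{lem004} give $I_q(\Omega)=I_q(\Omega_0)>0$, a contradiction. Your route is instead quantitative: from the slab containment you bound $|\Omega_t|\le 2\omega_{n-1}C_1^{n-1}r(t)$, then use the chord-length bound $|\Omega_t\cap\ell|\le 2C_1$ (valid since $q>3\ge 1$ in this section) and $I_1(\Omega_t)=|\Omega_t|$ to obtain $I_q(\Omega_t)\le (2C_1)^{q-1}|\Omega_t|\le C_2\,r(t)$, whence $r(t)\ge I_q(\Omega_0)/C_2$. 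Your argument is more elementary, avoids compactness, and produces an explicit constant; the paper's compactness argument is shorter to state and would extend to functionals for which such a direct pointwise comparison to volume is unavailable. Both are correct.
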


\begin{proof}
For each $t$, write
\begin{equation*}
  R_t = \max_{x\in\uS} h(x,t) = h(x_t,t)
\end{equation*}
for some $x_t\in\uS$.
Recalling $\Omega_t$ is origin-symmetric,
by the definition of the support function, we have
\begin{equation*}
  h(x,t)\geq R_t\cdot|\langle x,x_t \rangle|, \quad \forall x\in\uS.
\end{equation*}
By Lemma \ref{lem005}, $J(t)$ is non-increasing.
Thus,
\begin{equation}\label{eq:32}
  \begin{split}
    J(0)
    &\geq \int_{\uS} f(x) \log h(x,t) \dd x \\
    &\geq \int_{\uS} f(x) \left( \log R_t +\log|\langle x,x_t \rangle| \right) \dd x \\
    &= \log R_t \int_{\uS} f(x) \dd x
    +\int_{\uS} f(x) \log|\langle x,x_t \rangle| \dd x.
  \end{split}
\end{equation}
Note $|\langle x,x_t \rangle|\leq1$, there is
\begin{equation*}
  \begin{split}
    \int_{\uS} f(x) \log|\langle x,x_t \rangle| \dd x
    &\geq
    f_{\max} \int_{\uS} \log|\langle x,x_t \rangle| \dd x \\
    &=
    f_{\max} \int_{\uS} \log|x_n| \dd x \\
    &=
    2(n-1)\omega_{n-1} \int_0^{\frac{\pi}{2}} (\log \cos\theta) \sin^{n-2}\theta \dd \theta
    \cdot
    f_{\max} \\
    &=
    -C_n f_{\max},
  \end{split}
\end{equation*}
which together with \eqref{eq:32}, yields
\begin{equation*}
  J(0)
  \geq
  \log R_t \int_{\uS} f(x) \dd x
  -C_n f_{\max}.
\end{equation*}
Hence, $R_t\leq C$ for a
 positive constant depending only on $n$, $f$ and
$\Omega_0$, which is just the second inequality of \eqref{eq:30}.

 We are in a position to verify the first inequality of \eqref{eq:30}.
Suppose to the contrary that there exists a sequence of times $t_k\in[0,T)$ such
that
\begin{equation}\label{eq:33}
  \min_{\uS} h(\cdot, t_k) \to0^+ \text{ as }k\to\infty.
\end{equation}
Since we have already demonstrated that $\set{\Omega_{t_k}}$ is a bounded sequence of
origin-symmetric convex bodies, by the Blaschke's selection theorem,
$\set{\Omega_{t_k}}$ has a subsequence which converges to a nonempty, compact,
origin-symmetric convex subset, denoted by $\Omega$.
Without loss of generality, we assume
\begin{equation*}
  \Omega_{t_k}\to \Omega \text{ as }k\to\infty.
\end{equation*}
Therefore, the support function of $\Omega$ satisfies
\begin{equation*}
  \min_{\uS} h_{\Omega}
  = \lim_{k\to\infty} \min_{\uS} h(\cdot,t_k)
  =0,
\end{equation*}
which means that $\Omega$ is contained in a hyperplane in $\R^n$.
Hence, the chord integral $I_q(\Omega)=0$.
However, on the other hand, by the continuity of the chord integral and
Lemma \ref{lem004}, we have
\begin{equation*}
  I_q(\Omega)
  = \lim_{k\to\infty} I_q(\Omega_{t_k})
  = I_q(\Omega_0)
  >0.
\end{equation*}
This contradiction indicates that the assumption \eqref{eq:33} cannot hold.
Thus, the first inequality of \eqref{eq:30} is true.
This completes the proof of this lemma.
\end{proof}

By this lemma, we have uniform bounds for the dual
quermassintegral $\widetilde{V}_{q-1}$ as below.

\begin{lemma}\label{lem04}
Let $f$ be an even, positive, smooth function on $\sn$, and $\Omega_{t}$ be an origin symmetric, uniformly convex solution to the flow \eqref{flow}. Then there is a positive constant $C$ depending only on $n$, $q$, and the
constant in Lemma \ref{lem03},
such that for every $t\in[0,T)$,
\begin{equation*}
  1/C \leq \widetilde{V}_{q-1}(\Omega_t,\delbar h) \leq C \text{ on }\uS.
\end{equation*}
\end{lemma}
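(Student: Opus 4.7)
The plan is to use the pointwise formula
$$\widetilde{V}_{q-1}(\Omega_t,z)=\frac{1}{n}\int_{\uS}\rho_{\Omega_t,z}(u)^{q-1}\,\dd u$$
valid for $z\in\pd\Omega_t$ and $q-1>0$ recorded in Section~\ref{sec2}, applied with $z=\delbar h(x,t)=F(x,t)\in\pd\Omega_t$. Since $q>3$ we have $q-1>2$, so the integrand is a positive power of the radial function of $\Omega_t$ seen from a boundary point, and it suffices to control this radial function pointwise and then integrate. Lemma~\ref{lem03} gives the inclusions $B_{1/C}\subset\Omega_t\subset B_C$, which in particular force $1/C\leq|z|\leq C$ for every $z\in\pd\Omega_t$ and $\DIAM(\Omega_t)\leq 2C$.

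The upper bound is immediate: for all $u\in\uS$, $\rho_{\Omega_t,z}(u)\leq \DIAM(\Omega_t)\leq 2C$, so integrating gives $\widetilde{V}_{q-1}(\Omega_t,z)\leq \omega_n(2C)^{q-1}$.

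For the lower bound I would exhibit a solid angle of directions along which $\rho_{\Omega_t,z}(u)$ is bounded below by a positive constant. Fix $z\in\pd\Omega_t$ with $|z|\in[1/C,C]$. The tangent cone from $z$ to the origin-centered ball $B_{1/C}$ has half-angle $\theta_z$ determined by $\sin\theta_z=(1/C)/|z|\geq 1/C^2$. For every unit vector $u$ making an angle $\beta\leq\theta_z/2$ with $-z/|z|$, an elementary computation, solving the quadratic $|z+\lambda u|^2=1/C^2$, shows that the ray $z+\lambda u$ enters $B_{1/C}\subset\Omega_t$ and its far intersection with $\pd B_{1/C}$ lies at distance at least $c_1=c_1(C)>0$ from $z$, so $\rho_{\Omega_t,z}(u)\geq c_1$ on this sub-cone. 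Since the sub-cone has spherical measure at least $c_2=c_2(n,C)>0$, integrating gives $\widetilde{V}_{q-1}(\Omega_t,z)\geq c_1^{q-1}c_2/n$, which is the required lower bound.

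I do not anticipate a genuine obstacle here: the argument is purely geometric and rests solely on the two-sided ball inclusion coming from Lemma~\ref{lem03}. The only care required is to check that the constants $c_1$, $c_2$ depend only on $n$, $q$ and the constant $C$ of Lemma~\ref{lem03}, which is transparent from the construction of the sub-cone and the quadratic estimate above.
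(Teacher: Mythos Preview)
Your proposal is correct and takes essentially the same approach as the paper, which simply records in one sentence that the bound is ``obviously true'' from Lemma~\ref{lem03} and the definition of $\widetilde{V}_{q-1}$. You have spelled out the geometric details the paper suppresses: the upper bound from the diameter, and the lower bound from the cone of directions through the inscribed ball $B_{1/C}$.
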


\begin{proof}{}
  Utilizing Lemma \ref{lem03} and the definition of $\widetilde{V}_{q-1}$,
  the conclusion is obviously true.
\end{proof}

By the convexity of hypersurface, the gradient estimates of $h(\cdot,t)$ and $\rho(\cdot,t)$
directly follow from their upper and lower bounds.

\begin{lemma}\label{lem05}
Let $f$ be an even, positive, smooth function on $\sn$, and $\Omega_{t}$ be an origin symmetric, uniformly convex solution to the flow \eqref{flow}. Then there is a positive constant $C$ depending only on the constant in Lemma \ref{lem03} such that for every $t\in[0,T)$,
\begin{gather*}
  |\nabla h(x,t)| \leq C,
  \quad \forall (x,t) \in \mathbb{S}^{n-1} \times [0, T), \\
  |\nabla \rho(u,t)| \leq C,
  \quad \forall (u,t) \in \mathbb{S}^{n-1} \times [0, T).
\end{gather*}

\end{lemma}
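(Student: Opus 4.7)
The plan is to derive both gradient estimates from the uniform $C^0$ bounds of Lemma \ref{lem03} alone, without using the flow equation. The argument is purely convex-geometric and reuses only identities already recorded in Section \ref{sec2}.

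First I would handle $|\nabla h|$. From \eqref{hf} one has $\overline{\nabla} h(x,t) = F(x,t) \in \partial\Omega_t$, so $|\overline{\nabla} h(x,t)| \leq \rho_{\max}(t)$. Extending $h(\cdot,t)$ to $\rnnn$ as a $1$-homogeneous function gives the orthogonal decomposition $\overline{\nabla} h(x,t) = \nabla h(x,t) + h(x,t)\,x$ on $\uS$, from which
\begin{equation*}
|\nabla h(x,t)|^2 = |\overline{\nabla} h(x,t)|^2 - h(x,t)^2 \leq \rho_{\max}(t)^2.
\end{equation*}
Lemma \ref{lem03} then gives a bound uniform in $t$.

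Next I would treat $|\nabla\rho|$ by differentiating the radial parameterization $F(u,t) = \rho(u,t)\,u$ of $\partial\Omega_t$ along a local orthonormal frame $\{e_i\}$ on $\uS$, obtaining $F_i = \rho_i\,u + \rho\,e_i$. Since $F_i$ is tangent to $\partial\Omega_t$ at $F$, it is orthogonal to the outer unit normal $\nu$ there, so $\langle e_i,\nu\rangle = -(\rho_i/\rho)\langle u,\nu\rangle$. Squaring, summing over $i$, and using $|\nu|=1$ with the orthonormal basis $\{u,e_1,\dots,e_{n-1}\}$ of $\rnnn$ yields the clean identity
\begin{equation*}
\langle u,\nu\rangle^2\bigl(\rho^2 + |\nabla\rho|^2\bigr) = \rho^2.
\end{equation*}
Because $\langle F,\nu\rangle = h(\nu,t)$, i.e.\ $\rho(u,t)\langle u,\nu\rangle = h(\nu,t)$, this rearranges into
\begin{equation*}
|\nabla\rho(u,t)|^2 = \frac{\rho(u,t)^4}{h(\nu,t)^2} - \rho(u,t)^2,
\end{equation*}
and Lemma \ref{lem03} (which controls both $\rho$ from above and $h$ from below by positive constants independent of $t$) closes the estimate.

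No serious obstacle is expected: the two inputs are the orthogonal-decomposition identity for $\overline{\nabla} h$ and the algebraic identity relating $\langle u,\nu\rangle$ to $\rho$ and $|\nabla\rho|$, both of which are elementary consequences of the material in Section \ref{sec2}. The only remaining step is a direct invocation of Lemma \ref{lem03}.
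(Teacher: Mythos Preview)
Your proposal is correct and follows essentially the same approach as the paper: the paper simply quotes the two identities $\rho=\sqrt{|\nabla h|^2+h^2}$ and $h=\rho^2/\sqrt{|\nabla\rho|^2+\rho^2}$ as ``well-known facts'' and invokes Lemma \ref{lem03}, while you derive those identities explicitly from the relations in Section \ref{sec2}. The substance is identical.
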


\begin{proof}
Connecting $u$ and $x$ through
\begin{equation*}
  \rho(u,t)u=\nabla h(x,t)+h(x,t)x.
\end{equation*}
Thus we have the following well-known facts:
\begin{equation*}
  \rho=\sqrt{|\nabla h|^2+h^2},
  \qquad
  h=\frac{\rho^2}{\sqrt{|\nabla\rho|^2+\rho^2}}.
\end{equation*}
Now one can see that Lemma \ref{lem03} implies the conclusions of this lemma.
\end{proof}

\section{Derivatives and variations of the generalized dual quermassintegral} \label{sec5}

To study the flow \eqref{flow1}, we further need to establish uniform upper and
lower bounds for the principal curvatures,
which  requires differentiating \eqref{flow1} with respect to $x$
and $t$.
Since the flow involves the nonlocal dual quermassintegral
$\widetilde{V}_{q-1}(\Omega_t,\delbar h)$,
we first need to know how to calculate its derivatives with respect
to $x$ and $t$, this is the main purpose of this section.

\subsection{Explicit derivatives of $\widetilde{V}_q(\Omega,\cdot)$ when $q>2$}

As established in Section \ref{sec3}, for each $q>-1$, the functional $\widetilde{V}_q(\Omega,\cdot)$ is smooth whenever $\pd\Omega$ is smooth. However, it seems very hard to provide explicit expressions for derivatives of all orders.
Recalling Lemma \ref{lemNak}, it is possible to obtain the explicit $k$-th order
derivatives whenever $k<q$.

In fact, when $q>1$, the first order derivatives have already been explicitly
calculated in \eqref{eq:13}.
Letting $\omega\equiv1$ and $\phi\equiv1$, and recalling the value of $\varsigma$  taken as $\frac{y-z}{|y-z|}$, we obtain the following lemmas.

\begin{lemma}\label{lem006}
Assume $q>1$ and $\pd\Omega\in C^1$. Then for any $z\in\pd\Omega$, there is the following identity:
\begin{equation} \label{eq:34}
  \frac{n}{q(n-q)}
  \nabla_i \widetilde{V}_{q}(\Omega,z)
  = \int_{\Omega}
  \langle
  z_i,\tfrac{y-z}{|y-z|}
  \rangle
  |y-z|^{q-1-n}
  \dd y.
\end{equation}
\end{lemma}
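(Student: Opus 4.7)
The statement is essentially a special case of the general differentiation formula \eqref{eq:13} established in Lemma \ref{lemNak}, so the plan is to specialize that formula rather than differentiate from scratch.

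First I would rewrite $\widetilde{V}_q(\Omega,z)$ in the form of the generalized Riesz potential $N_a$. Since $z \in \pd\Omega$ and $q>0$, the last identity in Section \ref{sec2} gives
\begin{equation*}
  \widetilde{V}_q(\Omega,z)
  = \frac{q}{n}\int_{\Omega}\frac{\dd y}{|y-z|^{n-q}}
  = \frac{q}{n}\, N_q(z),
\end{equation*}
where $N_q$ is the potential \eqref{Na2} with the choices $\omega \equiv 1$ and $\phi \equiv 1$. Thus it suffices to show
\begin{equation*}
  \nabla_i N_q(z)
  = (n-q) \int_{\Omega} \Bigl\langle z_i, \tfrac{y-z}{|y-z|} \Bigr\rangle |y-z|^{q-1-n} \dd y.
\end{equation*}

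Next I would invoke Lemma \ref{lemNak} with $k=1$ and $a=q$; the hypotheses $\phi \in L^\infty(\Omega)$, $\omega \in C^1(\uS)$, $\rho \in C^1(\uS)$ and $k<a$ are all met because $\pd\Omega$ is $C^1$ and $q>1$. The differentiation formula \eqref{eq:13} then reads
\begin{equation*}
  \nabla_i N_q(z)
  = z^{A_1}_i \int_{\Omega}
  \bigl[(n-q)\,\omega\,\varsigma^{A_1} - \pd_{\tilde\alpha}\varsigma^{A_1}\,\pd_{\tilde\alpha}\omega\bigr]
  \,|y-z|^{q-1-n}\,\phi(y)\dd y.
\end{equation*}
Substituting $\omega\equiv1$ (so $\pd_{\tilde\alpha}\omega = 0$) and $\phi \equiv 1$ kills the second term, leaving
\begin{equation*}
  \nabla_i N_q(z)
  = (n-q)\int_{\Omega} z^{A_1}_i \varsigma^{A_1}\,|y-z|^{q-1-n}\dd y.
\end{equation*}
Finally, recognizing that $z^{A_1}_i\,\varsigma^{A_1} = \langle z_i, \varsigma\rangle = \langle z_i, \tfrac{y-z}{|y-z|}\rangle$ gives the desired formula, and multiplying through by $\tfrac{q}{n}$ and then by $\tfrac{n}{q(n-q)}$ yields \eqref{eq:34}.

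There is no real obstacle here since all the analytic work (convergence, differentiation under the integral sign, and the $C^1$ regularity of $\widetilde{V}_q(\Omega,\cdot)$ on $\pd\Omega$ for $q>1$) is already packaged in Lemma \ref{lemNak}; one only needs to check the integrability of $|y-z|^{q-1-n}$ near $z\in\pd\Omega$, which holds because $q>1$ implies $q-1-n>-n$. The entire proof amounts to bookkeeping: identify $\widetilde{V}_q$ with $\tfrac{q}{n}N_q$, read off the first-order derivative from \eqref{eq:13} with the trivial choices $\omega\equiv1$, $\phi\equiv1$, and simplify.
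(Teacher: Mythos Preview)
Your proposal is correct and follows exactly the same route as the paper: the paper states just before Lemma \ref{lem006} that the result is obtained from \eqref{eq:13} by setting $\omega\equiv1$ and $\phi\equiv1$, which is precisely what you do after identifying $\widetilde{V}_q=\tfrac{q}{n}N_q$.
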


Using the same notations in Lemma \ref{lemNak}, and recalling \eqref{eq:10} and
\eqref{eq:11}, we have
\begin{equation*}
  \begin{split}
    \nabla_i(\varsigma^A|y-z|^{q-1-n})
    &=
    |y-z|^{q-2-n}
    \left(
      (q-1-n)\varsigma^A\nabla_i|y-z|
      +
      |y-z| \nabla_i\varsigma^A
    \right) \\
    &=
    |y-z|^{q-2-n}
    \left(
      (n+1-q)z^{A_1}_i\varsigma^{A_1}\varsigma^A
      -z^{A_1}_i \pd_{\tilde{\alpha}} \varsigma^{A_1} \cdot \pd_{\tilde{\alpha}} \varsigma^A
    \right) \\
    &=
    |y-z|^{q-2-n}
    z^{A_1}_i
    \left(
      (n+2-q)\varsigma^{A_1}\varsigma^A
      -\delta_{A_1A}
    \right),
  \end{split}
\end{equation*}
which yields
\begin{equation*}
  z^A_j\nabla_i(\varsigma^A|y-z|^{q-1-n})
  =
  |y-z|^{q-2-n}
  \left[
    (n+2-q)
    \langle z_i,\varsigma \rangle
    \langle z_j,\varsigma \rangle
    - \langle z_i,z_j \rangle
  \right].
\end{equation*}
Now writing \eqref{eq:34} as
\begin{equation*}
  \frac{n}{q(n-q)}
  \nabla_j \widetilde{V}_{q}
  =
  z^{A}_j
  \int_{\Omega}
  \varsigma^{A}
  |y-z|^{q-1-n}
  \dd y,
\end{equation*}
then one can compute
\begin{equation*}
  \begin{split}
    \frac{n}{q(n-q)}
    \nabla_{ij}^2 \widetilde{V}_{q}
    &=
    z^{A}_{ji}
    \int_{\Omega}
    \varsigma^{A}
    |y-z|^{q-1-n}
    \dd y
    +
    z^{A}_j
    \int_{\Omega}
    \nabla_i(
    \varsigma^{A}
    |y-z|^{q-1-n})
    \dd y \\
    &=
    \int_{\Omega}
    \langle z_{ji},\varsigma \rangle
    |y-z|^{q-1-n}
    \dd y \\
    &\hskip1.2em
    + \int_{\Omega}
    \left[
      (n+2-q)
      \langle z_i,\varsigma \rangle
      \langle z_j,\varsigma \rangle
      - \langle z_i,z_j \rangle
    \right]
    |y-z|^{q-2-n}
    \dd y.
  \end{split}
\end{equation*}
Thus, the following lemma can be derived.
\begin{lemma}\label{lem007}
Assume $q>2$ and $\pd\Omega\in C^2$. Then for any $z\in\pd\Omega$, there is the following identity:
\begin{equation}
\begin{split}
 \label{eq:35}
  \frac{n}{q(n-q)}
  \nabla_{ij}^2 \widetilde{V}_{q}(\Omega,z)
  &=
  \int_{\Omega}
  \langle {z_{ji},\tfrac{y-z}{|y-z|}} \rangle
  |y-z|^{q-1-n}
  \dd y \\
  &\quad+ \int_{\Omega}
  \left(
    (n+2-q)
    \langle {z_i,\tfrac{y-z}{|y-z|}} \rangle
    \langle {z_j,\tfrac{y-z}{|y-z|}} \rangle
    - \langle z_i,z_j \rangle
  \right)
  |y-z|^{q-2-n}
  \dd y.
\end{split}
\end{equation}

\end{lemma}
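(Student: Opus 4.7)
The plan is to start from Lemma \ref{lem006}, which gives an explicit expression for the first covariant derivative $\nabla_j \widetilde{V}_q(\Omega,z)$ on $\pd\Omega$, and to differentiate it a second time. Writing the formula from Lemma \ref{lem006} as
\[
  \frac{n}{q(n-q)} \nabla_j \widetilde{V}_q(\Omega,z)
  = z^A_j \int_\Omega \varsigma^A |y-z|^{q-1-n}\dd y,
\]
I will apply the covariant derivative $\nabla_i$ and split the result by the Leibniz rule into the term where $\nabla_i$ acts on $z^A_j$ (producing $z^A_{ji}$ paired with the original first-derivative integrand) and the term where $\nabla_i$ acts on the integral itself.

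Next I would differentiate inside the integral. The basic identities \eqref{eq:10} and \eqref{eq:11}, namely $\nabla_i|y-z| = -z^{A_1}_i \varsigma^{A_1}$ and $\nabla_i\varsigma^A = -|y-z|^{-1} z^{A_1}_i \pd_{\tilde\alpha}\varsigma^{A_1} \pd_{\tilde\alpha}\varsigma^A$, combined with the Kronecker identity $\pd_{\tilde\alpha}\varsigma^{A_1}\pd_{\tilde\alpha}\varsigma^A = \delta_{A_1 A}-\varsigma^{A_1}\varsigma^A$, give
\[
  \nabla_i\bigl(\varsigma^A |y-z|^{q-1-n}\bigr)
  = |y-z|^{q-2-n} z^{A_1}_i\bigl[(n+2-q)\varsigma^{A_1}\varsigma^A - \delta_{A_1 A}\bigr].
\]
Contracting with $z^A_j$ converts $z^{A_1}_i z^A_j \varsigma^{A_1}\varsigma^A$ into $\langle z_i,\varsigma\rangle\langle z_j,\varsigma\rangle$ and $z^{A_1}_i z^A_j \delta_{A_1 A}$ into $\langle z_i,z_j\rangle$, which yields precisely the integrand in \eqref{eq:35}.

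The main obstacle is justifying that the second differentiation can be carried under the integral sign, since the resulting integrand has a singularity of order $|y-z|^{q-2-n}$. This is where the hypothesis $q>2$ enters critically: the new exponent satisfies $q-2-n > -n$, so the singular integrand is still locally integrable on $\Omega$, and a standard truncation argument (replace $z$ by $z+\epsilon\nu_0$ as in part (2) of the proof of Lemma \ref{lemNak}, apply the Leibniz rule to the smooth truncated integral, and use uniform continuity from Lemma \ref{lem601} to pass to the limit $\epsilon\to 0^+$) transfers the pointwise differentiation inside the integral. The $C^2$ regularity of $\pd\Omega$ guarantees that the local frame derivatives $z_{ji}$ are well defined and continuous, so the boundary term $z^A_{ji}\int_\Omega \varsigma^A|y-z|^{q-1-n}\dd y$ makes sense and corresponds to the first integral in \eqref{eq:35}. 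Collecting these two contributions produces the claimed identity.
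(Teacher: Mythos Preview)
Your proposal is correct and follows essentially the same approach as the paper: start from the first-derivative formula of Lemma~\ref{lem006}, apply $\nabla_i$ via the Leibniz rule, and use the identities \eqref{eq:10}--\eqref{eq:11} together with $\pd_{\tilde\alpha}\varsigma^{A_1}\pd_{\tilde\alpha}\varsigma^A=\delta_{A_1A}-\varsigma^{A_1}\varsigma^A$ to compute $\nabla_i(\varsigma^A|y-z|^{q-1-n})$ and then contract with $z^A_j$. Your explicit justification for differentiating under the integral sign (via the $\epsilon\nu_0$-shift argument from part~(2) of Lemma~\ref{lemNak}) is exactly what the paper relies on implicitly.
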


Now we can compute the composite derivatives of $\widetilde{V}_q(\Omega,\delbar h(x))$.
Recall that $\set{e_1, \cdots, e_{n-1}}$ is a local orthonormal frame on $\uS$,
and recall the notation
\begin{equation} \label{bij}
  b_{ij} = h_{ij}+h\delta_{ij}.
\end{equation}
Selecting the local frame
$\set{z_1, \cdots, z_{n-1}}$
on $\pd\Omega$ as
\begin{equation*}
  z_i
  =(\delbar h)_*e_i
  =e_jb_{ij},
\end{equation*}
we have
\begin{equation} \label{eq:39}
  \begin{split}
    \nabla_{e_i}
    \widetilde{V}_q(\Omega,\delbar h(x))
    &=
    \nabla_{z_i}
    \widetilde{V}_q(\Omega,z) \big|_{z=\delbar h}, \\
    \nabla_{e_ie_j}^2
    \widetilde{V}_q(\Omega,\delbar h(x))
    &=
    \nabla_{z_iz_j}^2
    \widetilde{V}_q(\Omega,z) \big|_{z=\delbar h},
  \end{split}
\end{equation}
and one sees
\begin{equation}\label{eq:40}
  z_{ji}=e_kb_{jki}-xb_{ji}.
\end{equation}
Combining \eqref{eq:34}, \eqref{eq:35}, \eqref{eq:39} and \eqref{eq:40}, we
finally derive the explicit derivatives of $\widetilde{V}_q(\Omega,\delbar h(x))$.

\begin{lemma}\label{lem008}
Assume $q>2$, $\pd\Omega\in C^2$, and $h\in C^3(\uS)$. Then for any $x\in\uS$, there are the following identities:
\begin{equation} \label{eq:41}
  \frac{n}{q(n-q)}
  \nabla_i \widetilde{V}_{q}(\Omega,\delbar h(x))
  =
  b_{ik}
  \int_{\Omega}
  \langle
  e_k,\tfrac{y-\delbar h}{|y-\delbar h|}
  \rangle
  |y-\delbar h|^{q-1-n}
  \dd y,
\end{equation}
and
\begin{multline} \label{eq:36}
  \frac{n}{q(n-q)}
  \nabla_{ij}^2 \widetilde{V}_{q}(\Omega,\delbar h(x)) \\
  =
  b_{jki}
  \int_{\Omega}
  \langle {e_k ,\tfrac{y-\delbar h}{|y-\delbar h|}} \rangle
  |y-\delbar h|^{q-1-n}
  \dd y
  -b_{ji}
  \int_{\Omega}
  \langle {x ,\tfrac{y-\delbar h}{|y-\delbar h|}} \rangle
  |y-\delbar h|^{q-1-n}
  \dd y \\
  +b_{ik}b_{jl}
  \int_{\Omega}
  \left(
    (n+2-q)
    \langle {e_k,\tfrac{y-\delbar h}{|y-\delbar h|}} \rangle
    \langle {e_l,\tfrac{y-\delbar h}{|y-\delbar h|}} \rangle
    - \delta_{kl}
  \right)
  |y-\delbar h|^{q-2-n}
  \dd y.
\end{multline}
\end{lemma}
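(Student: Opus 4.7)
The plan is to derive both identities as direct consequences of Lemmas \ref{lem006} and \ref{lem007}, applied to the composite function $x\mapsto \widetilde{V}_q(\Omega,\delbar h(x))$ via the chain rule. Two ingredients have already been assembled just before the lemma: formula \eqref{eq:39}, relating covariant derivatives on $\uS$ (in the local orthonormal frame $\{e_i\}$) to those on $\pd\Omega$ (in the pushed-forward frame $\{z_i=e_kb_{ik}\}$); and formula \eqref{eq:40}, expressing the Euclidean derivative $z_{ji}$ of the frame vector $z_j$ along $e_i$ in terms of $e_k$ and $x$ with coefficients $b_{jki}$ and $b_{ji}$.

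For \eqref{eq:41} I would apply Lemma \ref{lem006} at $z=\delbar h(x)$; since the right-hand side of \eqref{eq:34} is linear in $z_i$, the substitution $z_i=e_kb_{ik}$ pulls $b_{ik}$ outside the integral and produces \eqref{eq:41} immediately. For \eqref{eq:36} I would apply Lemma \ref{lem007} and substitute into each of the three terms of \eqref{eq:35}. The term $\langle z_{ji},\varsigma\rangle|y-z|^{q-1-n}$ splits into two pieces by \eqref{eq:40}, one with coefficient $b_{jki}$ against $\langle e_k,\varsigma\rangle$ and one with coefficient $b_{ji}$ against $\langle x,\varsigma\rangle$. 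The bilinear term $\langle z_i,\varsigma\rangle\langle z_j,\varsigma\rangle$ factors as $b_{ik}b_{jl}\langle e_k,\varsigma\rangle\langle e_l,\varsigma\rangle$, and $\langle z_i,z_j\rangle$ collapses to $b_{ik}b_{jl}\delta_{kl}$ by orthonormality of $\{e_k\}$. Assembling the three contributions gives \eqref{eq:36} verbatim.

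There is no substantive obstacle: the proof is essentially bookkeeping of the chain rule. The one point meriting care is the justification of \eqref{eq:39}, which for the scalar $\widetilde{V}_q(\Omega,\cdot)$ follows from the chain rule for first derivatives and, for second derivatives, from differentiating the first-derivative formula and using \eqref{eq:40} for the derivative of the frame vector. The hypothesis $q>2$ is exactly what makes the worst singularity $|y-\delbar h|^{q-2-n}$ appearing in \eqref{eq:36} integrable over $\Omega$, so differentiation under the integral sign is licit along the lines of the argument already carried out in the proof of Lemma \ref{lemNak}.
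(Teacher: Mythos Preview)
Your proposal is correct and follows exactly the paper's approach: the lemma is stated immediately after \eqref{eq:39} and \eqref{eq:40} with the remark ``Combining \eqref{eq:34}, \eqref{eq:35}, \eqref{eq:39} and \eqref{eq:40}, we finally obtain explicit derivatives of $\widetilde{V}_q(\Omega,\delbar h(x))$,'' and no further proof is given. Your bookkeeping of how each term arises from the substitutions $z_i=b_{ik}e_k$ and $z_{ji}=b_{jki}e_k-b_{ji}x$ is precisely what is being combined.
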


\subsection{Variational formulas for $\widetilde{V}_q(\Omega_t,z_t)$ when $q>1$}

\begin{lemma}\label{lem009}
Assume $q>1$, and
$\rho(\cdot,t)$ is the radial function of $\Omega_t$. For
$t\in(t_0,t_1)$,
and for any $z_t\in\pd\Omega_t$, then we get
\begin{multline}\label{eq:47}
  \frac{\dd}{\dd t}
  \widetilde{V}_q(\Omega_t,z_t)
  =
  q \int_{\Omega_t} \frac{\rho'\Bigl( \frac{y}{|y|},t \Bigr)}{\rho\Bigl( \frac{y}{|y|},t \Bigr)}
  \frac{\dd y}{|y-z_t|^{n-q}} \\
  +
  \frac{q(q-n)}{n}
  \int_{\Omega_t}
  \Biggl\langle
  \frac{\rho'\Bigl( \frac{y}{|y|},t \Bigr)}{\rho\Bigl( \frac{y}{|y|},t \Bigr)} y -z_t',
  \tfrac{y-z_t}{|y-z_t|}
  \Biggr\rangle
  \frac{\dd y}{|y-z_t|^{n-(q-1)}},
\end{multline}
where $\rho'(\cdot,t)$ and $z'_{t}$ denote the derivatives of $\rho(\cdot,t)$ and $z_{t}$ with respect to $t$ respectively.
\end{lemma}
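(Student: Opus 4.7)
\textbf{Proof plan for Lemma \ref{lem009}.}

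The starting point is the explicit integral representation recalled in the introduction: since $q>0$ and $z_t\in\partial\Omega_t$, we have
\begin{equation*}
  \widetilde{V}_q(\Omega_t,z_t)
  = \frac{q}{n}\int_{\Omega_t}\frac{\dd y}{|y-z_t|^{n-q}}.
\end{equation*}
Thus I will differentiate the right-hand side in $t$, using Reynolds' transport theorem, which splits $\frac{\dd}{\dd t}\int_{\Omega_t}f(y,t)\dd y$ into an interior term $\int_{\Omega_t}\partial_t f\dd y$ and a boundary term $\int_{\partial\Omega_t}f\,V\cdot\nu\dd S$, where $V$ is (an extension of) the boundary velocity. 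Here $f(y,t)=|y-z_t|^{q-n}$, and a direct computation gives
\begin{equation*}
  \partial_t |y-z_t|^{q-n}
  = -(q-n)\Bigl\langle z_t',\tfrac{y-z_t}{|y-z_t|}\Bigr\rangle |y-z_t|^{q-n-1},
\end{equation*}
which is integrable on $\Omega_t$ since $q>1$, accounting for the $z_t'$ contribution in the second integral of \eqref{eq:47}.

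The key step is to handle the boundary term without having to compute $\nu$ or the surface element directly. Since the boundary is parameterized by $w\mapsto\rho(w,t)w$, the boundary velocity at the point $y=\rho(w,t)w$ equals $\rho'(w,t)w$. I will extend this to a vector field on all of $\Omega_t$ by
\begin{equation*}
  \widetilde{V}(y):=\frac{\rho'(y/|y|,t)}{\rho(y/|y|,t)}\,y,
\end{equation*}
which agrees with the boundary velocity on $\partial\Omega_t$; only the normal component on the boundary matters, so this is a legitimate replacement. Applying the divergence theorem to the resulting boundary integral converts it back into the interior integral
\begin{equation*}
  \int_{\partial\Omega_t}|y-z_t|^{q-n}\widetilde{V}\cdot\nu\dd S
  = \int_{\Omega_t}\DIV\bigl(|y-z_t|^{q-n}\widetilde{V}\bigr)\dd y.
\end{equation*}

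The divergence splits as $(q-n)|y-z_t|^{q-n-1}\langle \tfrac{y-z_t}{|y-z_t|},\widetilde{V}\rangle+|y-z_t|^{q-n}\DIV\widetilde{V}$. Writing $\widetilde{V}=\lambda(y)y$ with $\lambda(y)=\rho'(y/|y|,t)/\rho(y/|y|,t)$, one has $\DIV\widetilde{V}=y\cdot\nabla\lambda+n\lambda$; since $\lambda$ is $0$-homogeneous in $y$, Euler's identity forces $y\cdot\nabla\lambda=0$, so $\DIV\widetilde{V}=n\,\rho'(y/|y|,t)/\rho(y/|y|,t)$. Plugging everything back, combining the interior term with the divergence-theorem term, and multiplying by $q/n$ yields exactly \eqref{eq:47}, with the $\widetilde{V}$ and $z_t'$ pieces assembling into the inner product $\bigl\langle\tfrac{\rho'}{\rho}y-z_t',\tfrac{y-z_t}{|y-z_t|}\bigr\rangle$.

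The only delicate point is integrability of the resulting integrals on $\Omega_t$: $|y-z_t|^{q-n}$ is integrable near $z_t\in\partial\Omega_t$ as soon as $q>0$, while $|y-z_t|^{q-n-1}$ is integrable precisely when $q>1$, which is exactly the hypothesis. This can also be used to justify the differentiation under the integral sign (by approximating $\Omega_t$ with domains excluding a shrinking neighborhood of $z_t$, using the uniform bound $\rho\in C^1$ and dominated convergence). The extension $\widetilde V$ and the $0$-homogeneity trick are the only slightly non-routine ingredients; once those are in place, the remainder is a direct computation.
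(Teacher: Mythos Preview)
Your proof is correct and reaches the same formula, but the route differs from the paper's. The paper pulls back the integral to the fixed unit ball via the radial diffeomorphism $v\mapsto \rho(\bar v,t)\,v$ (with Jacobian $\rho(\bar v,t)^n$), differentiates under the integral sign on the fixed domain, and then changes variables back. Your approach stays on the moving domain, invokes Reynolds' transport theorem, and then converts the boundary flux into a volume integral via the divergence theorem using the extension $\widetilde V(y)=\tfrac{\rho'}{\rho}\,y$ and the observation that $\DIV\widetilde V=n\,\rho'/\rho$ by $0$-homogeneity. These are really two presentations of the same idea: the paper's pullback map is exactly the flow whose velocity field on $\Omega_t$ is your $\widetilde V$, so differentiating after the change of variables is Reynolds' theorem for that specific flow. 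The paper's version is slightly cleaner bookkeeping-wise (no explicit divergence theorem with a singular integrand to justify), while yours is more geometric and makes the role of the extension $\widetilde V$ and the hypothesis $q>1$ (needed for integrability of $|y-z_t|^{q-1-n}$ near $z_t\in\partial\Omega_t$) more transparent. One small point you might add for completeness: $\widetilde V$ is only Lipschitz, not $C^1$, at the origin, so when justifying the divergence theorem you should also excise a small ball around $0$; the corresponding boundary contribution is $O(\epsilon^n)$ and vanishes in the limit.
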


\begin{proof}{}
For each $t$, applying the variable substitution from the unit ball onto
$\Omega_t$:
\begin{equation}\label{eq:52}
  y= \rho\left( \frac{v}{|v|},t \right) v,
  \quad \forall |v|<1.
\end{equation}
Thus, we get
\begin{equation}\label{Vade}
  \widetilde{V}_q(\Omega_t,z_t)
  =\frac{q}{n}
  \int_{|v|<1}
  \frac{\rho(\bar{v},t)^n \dd v}
  {| \rho(\bar{v},t)v -z_t |^{n-q}},
\end{equation}
where $\bar{v}$ denotes $\tfrac{v}{|v|}$ for simplicity.
Differentiating formally, there is
\begin{equation}\label{eq:63}
  \begin{split}
    \frac{\dd}{\dd t}\widetilde{V}_q(\Omega_t,z_t)
    & =
   q \int_{|v|<1}
    \frac{\rho(\bar{v},t)^{n-1} \rho'(\bar{v},t) \dd v}
    {| \rho(\bar{v},t)v -z_t |^{n-q}} \\
    &\hskip1.2em +
    \frac{q(q-n)}{n}
    \int_{|v|<1}
    \bigl\langle
    \rho'(\bar{v},t) v -z_t',
    \frac{\rho(\bar{v},t)v -z_t}{| \rho(\bar{v},t)v -z_t |}
    \bigr\rangle
    \frac{\rho(\bar{v},t)^n \dd v}
    {| \rho(\bar{v},t)v -z_t |^{n-(q-1)}}.
  \end{split}
\end{equation}
By virtue of \eqref{Vade},
one can  see that \eqref{eq:63} is true.
Now by \eqref{eq:52} again, noting $\bar{v}=\tfrac{y}{|y|}$ and
$v=\tfrac{y}{\rho(\bar{v},t)}$, we conclude that \eqref{eq:63} is just \eqref{eq:47}.
The proof of this lemma is complete.
\end{proof}

When $z_t$ is given in terms of the support function, the variational formula \eqref{eq:47}
can be also derived in terms of the support function.

\begin{lemma}\label{lem002}
Assume $q>1$, and
$h(\cdot,t)$ is the support function of $\Omega_t$. For
$t\in(t_0,t_1)$,
and for any $\overline{\nabla}h(x,t)\in\pd\Omega_t$, then we have 
\begin{equation}
\begin{split}
\label{eq:51}
  \frac{\dd}{\dd t}
  \widetilde{V}_q(\Omega_t,\delbar h(x,t))
  &=
 q \int_{\Omega_t}
  \frac{h'(\tilde{x},t)}{h( \tilde{x},t)}
  \frac{\dd y}{|y-\delbar h(x,t)|^{n-q}} \\
  &\quad+
  \frac{q(q-n)}{n}
  \int_{\Omega_t}
  \Bigl\langle
  \frac{h'( \tilde{x},t)}{h(\tilde{x},t)} y -\delbar h( x,t)',
  \tfrac{y-\delbar h(x,t)}{|y-\delbar h(x,t)|}
  \Bigr\rangle
  \frac{\dd y}{|y-\delbar h(x,t)|^{n-(q-1)}},
  \end{split}
\end{equation}
where $\tilde{x}:= \nu_{\ssOmega_t}\bigl( \rho(y,t)y \bigr)$, $\delbar h( x,t)'$ and $h^{'}(\cdot, t)$ denote the derivatives of $\delbar h( x,t)$ and $h(\cdot,t)$ with respect to $t$ respectively.
\end{lemma}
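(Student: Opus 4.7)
The plan is to derive Lemma \ref{lem002} as a direct specialization of Lemma \ref{lem009}: I will choose $z_t = \overline{\nabla}h(x,t)$, observe that the spatial gradient commutes with the time derivative at fixed $x$ so that $z_t'=\overline{\nabla}h(x,t)'$ matches the corresponding term on the right-hand side of \eqref{eq:51}, and then convert the logarithmic radial derivative $\rho'(\cdot,t)/\rho(\cdot,t)$ appearing in \eqref{eq:47} into the logarithmic support derivative $h'(\cdot,t)/h(\cdot,t)$ via a standard duality identity between $h$ and $\rho$.

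The central ingredient is the pointwise identity
\begin{equation*}
  \frac{\rho'(u,t)}{\rho(u,t)} = \frac{h'(\tilde{x},t)}{h(\tilde{x},t)},
  \qquad
  u\in\uS,\ \tilde{x}=\nu_{\ssOmega_t}(\rho(u,t)u).
\end{equation*}
I would establish this from two elementary observations. First, since $\tilde{x}$ is the outer unit normal of $\partial\Omega_t$ at the boundary point $\rho(u,t)u$, the support relation gives $h(\tilde{x},t)=\langle\rho(u,t)u,\tilde{x}\rangle=\rho(u,t)\langle u,\tilde{x}\rangle$. Second, writing $h(\tilde{x},t)=\max_{v\in\uS}\rho(v,t)\langle v,\tilde{x}\rangle$ and differentiating in $t$ at fixed $\tilde{x}$, the envelope theorem yields $h'(\tilde{x},t)=\rho'(u,t)\langle u,\tilde{x}\rangle$ because the maximizer is $v=u$. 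Dividing these two relations cancels the common factor $\langle u,\tilde{x}\rangle>0$ (positive because the origin lies in the interior of $\Omega_t$) and produces the identity.

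With this identity in hand, I would apply Lemma \ref{lem009} to $z_t=\overline{\nabla}h(x,t)$ and, for each $y\in\Omega_t$, set $u=y/|y|$ so that $\tilde{x}=\nu_{\ssOmega_t}(\rho(y,t)y)$ in the convention of the statement. Substituting $\rho'(y/|y|,t)/\rho(y/|y|,t)=h'(\tilde{x},t)/h(\tilde{x},t)$ in both integrals of \eqref{eq:47} transforms them verbatim into the two integrals of \eqref{eq:51}, completing the proof.

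The main obstacle is ensuring that the envelope-theorem step is rigorous, which requires the maximizer $u(\tilde{x},t)$ to vary at least $C^1$ in $t$. Under the running hypothesis that $\Omega_t$ is $C^2$ and uniformly convex along the flow \eqref{flow1}, the radial map $u\mapsto\rho(u,t)u$ is a $C^2$ diffeomorphism onto $\partial\Omega_t$ and the Gauss map is equally smooth, so no additional difficulty arises. Alternatively, one may differentiate the relation $h(\tilde{x}(u,t),t)=\rho(u,t)\langle u,\tilde{x}(u,t)\rangle$ in $t$ at fixed $u$ and use $\overline{\nabla}h(\tilde{x},t)=\rho(u,t)u$ together with the orthogonality of $\tilde{x}'(u,t)$ to $\tilde{x}$ to eliminate the terms involving $\tilde{x}'(u,t)$, thereby avoiding the envelope theorem altogether.
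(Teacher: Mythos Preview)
Your proposal is correct and follows essentially the same route as the paper: specialize Lemma \ref{lem009} with $z_t=\overline{\nabla}h(x,t)$ and replace $\rho'/\rho$ by $h'/h$ via the pointwise duality identity. Your ``alternative'' argument---differentiating $h(\tilde{x}(u,t),t)=\rho(u,t)\langle u,\tilde{x}(u,t)\rangle$ at fixed $u$ and using $\overline{\nabla}h(\tilde{x},t)=\rho(u,t)u$ to cancel the $\tilde{x}'$ terms---is exactly the computation the paper carries out, so no envelope theorem is needed.
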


\begin{proof}{}
By the assumption that $\pd\Omega_t$ is $C^1$ smooth and strictly convex, one
can link the support function $h(\cdot,t)$ and the radial function $\rho(\cdot,t)$ through the
 relationship:
\begin{equation}\label{eq:37}
\nabla h(x,t) +h(x,t)x =\rho(u,t)u,
\end{equation}
where $x$ can be viewed as a function of $u$ and $t$. From \cite[Lemma 2.1]{LSW20}, we find the following formula:
\begin{equation} \label{eq:45}
  \frac{h'(x,t)}{h(x,t)}
  = \frac{\rho'(u,t)}{\rho(u,t)},
\end{equation}
where $x$ and $u$ are related via \eqref{eq:37}.
 Hence, \eqref{eq:51} holds by substituting \eqref{eq:45} into \eqref{eq:47}.
\end{proof}

\section{Uniform bounds for principal curvatures}
\label{sec6}

Based on the uniform bounds of $h$ and $\nabla h$ obtained in Section
\ref{sec4}, and the differentiation formulas of
$\widetilde{V}_q(\Omega_t,\delbar h)$ provided in Section \ref{sec5},
we are in a position to prove uniform bounds for the principal curvatures of
$\pd\Omega_t$ along the flow \eqref{flow1}.

For convenience, the remainder of this part adopts the convention that repeated indices are automatically summed over. We first estimate the upper bound of Gauss curvature $\kappa$ of $\pd\Omega_t$.
\begin{lemma}\label{lem010}
Let $f$ be an even, positive, smooth function on $\sn$, and $\Omega_{t}$ be an origin symmetric, uniformly convex solution to the flow \eqref{flow}. Then there exists a positive constant $C$ independent of $t$ such that the Gauss curvature of $\partial\Omega_{t}$ satisfies
\begin{equation*}
  \kappa(x,t) \leq C,
  \quad \forall (x,t) \in \mathbb{S}^{n-1} \times [0, T).
\end{equation*}
\end{lemma}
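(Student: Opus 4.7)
The plan is to reduce the curvature estimate to one on the speed $\beta:=h-h_t=f(x)\kappa/\widetilde{V}_{q-1}(\Omega_t,\overline{\nabla}h)$. By Lemmas \ref{lem03} and \ref{lem04}, both $f$ and $\widetilde{V}_{q-1}$ are pinched between two positive constants along the flow, so it suffices to bound $\beta$ from above. Following the classical Tso-type strategy, I will study the auxiliary function $\Psi(x,t)=\beta(x,t)/(h(x,t)-c_0)$, where $c_0$ is chosen in $(0,\tfrac{1}{2}\min h)$ using the positive lower bound of Lemma \ref{lem03}, and apply the parabolic maximum principle to $\Psi$ on $\uS\times[0,T)$. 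At $t=0$ the smoothness of $\Omega_0$ controls $\Psi(\cdot,0)$, so I only need to handle a positive interior spacetime maximum $(x_0,t_0)$.

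At such a maximum, the first- and second-order conditions give $\nabla\beta=\Psi\nabla h$, $\beta_{ij}\leq\Psi h_{ij}$ as symmetric matrices, and $\beta_t\geq\Psi h_t=\Psi(h-\beta)$. From the flow equation, $b_{ij,t}=(h_t)_{ij}+h_t\delta_{ij}=-\beta_{ij}-\beta\delta_{ij}+b_{ij}$, so $\kappa_t=\kappa b^{ij}(\beta_{ij}+\beta\delta_{ij})-(n-1)\kappa$, and therefore
\begin{equation*}
  \beta_t=\frac{\beta}{\kappa}\kappa_t-\frac{\beta}{\widetilde{V}_{q-1}}\pd_t\widetilde{V}_{q-1}
  =\beta b^{ij}\beta_{ij}+\beta^2 b^{ii}-(n-1)\beta-\frac{\beta}{\widetilde{V}_{q-1}}\pd_t\widetilde{V}_{q-1}.
\end{equation*}
Combining with $b^{ij}\beta_{ij}\leq(n-1)\Psi-\Psi h\, b^{ii}$ and $\beta-\Psi h=-\Psi c_0$, the inequality $\Psi_t\geq 0$ reduces, at $(x_0,t_0)$, to
\begin{equation*}
  -c_0\Psi\beta\, b^{ii}+(n-1)(\Psi-1)\beta-\frac{\beta}{\widetilde{V}_{q-1}}\pd_t\widetilde{V}_{q-1}\geq\Psi(h-\beta).
\end{equation*}

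The heart of the argument, and the step I expect to be the main obstacle, is to estimate the nonlocal contribution $\pd_t\widetilde{V}_{q-1}(\Omega_t,\overline{\nabla}h(x_0,t))$ using the variational formula of Lemma \ref{lem002} (which is why we need $q>3$, so $q-1>2$ and the relevant kernels $|y-\overline{\nabla}h|^{q-1-n}$ and $|y-\overline{\nabla}h|^{q-2-n}$ remain integrable on $\Omega_t$ after the boundary bounds of Lemmas \ref{lem03}--\ref{lem05}). That formula expresses $\pd_t\widetilde{V}_{q-1}$ as two integrals whose integrands involve $h'(\tilde x)/h(\tilde x)=1-\beta(\tilde x)/h(\tilde x)$ and $(\overline{\nabla}h)'(x_0,t)=\overline{\nabla}h(x_0)-\overline{\nabla}\beta(x_0)$. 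The key observation is that $\beta_{\max}(\cdot,t_0)\leq\Psi(x_0,t_0)\cdot\max(h-c_0)\leq C\Psi(x_0,t_0)$ by the very definition of the maximum of $\Psi$, and that $\overline{\nabla}\beta(x_0)=\Psi\overline{\nabla}h(x_0)-\Psi c_0 x_0$ by the first-order condition and $1$-homogeneous extension. Consequently $|\pd_t\widetilde{V}_{q-1}|\leq C(1+\Psi(x_0,t_0))$, and the nonlocal term is controlled by $C\Psi(x_0,t_0)^2$.

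It remains to close the inequality by AM--GM. Since the $(n-1)$ positive eigenvalues of $(b_{ij})$ have product $\kappa^{-1}$, we have $b^{ii}\geq(n-1)\kappa^{1/(n-1)}\geq c\beta^{1/(n-1)}$, so $-c_0\Psi\beta\, b^{ii}\leq-c'\Psi(x_0,t_0)^{(2n-1)/(n-1)}$ at the maximum, using $\beta(x_0,t_0)\geq c_0\Psi(x_0,t_0)$. Because the exponent $(2n-1)/(n-1)=2+1/(n-1)$ strictly exceeds $2$, this negative term dominates the at-most-quadratic remainders $(n-1)(\Psi-1)\beta$, $\Psi(h-\beta)$, and the nonlocal $C\Psi^2$ contribution as $\Psi(x_0,t_0)\to\infty$. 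This forces $\Psi(x_0,t_0)\leq C$ for a constant depending only on the data, giving the claimed uniform upper bound on $\kappa$.
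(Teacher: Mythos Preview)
Your argument is correct and follows essentially the same Tso-type strategy as the paper: the paper uses the auxiliary quantity $Q=-h_t/(h-\varepsilon_0)$ while you use $\Psi=(h-h_t)/(h-c_0)$, which differ only by the bounded term $h/(h-c_0)$, and both of you control the nonlocal contribution $\pd_t\widetilde{V}_{q-1}$ via Lemma~\ref{lem002} by bounding $h'/h$ and $(\overline{\nabla}h)'$ in terms of the spatial maximum of the auxiliary function. The closing step, pitting the super-quadratic negative term $-c\,\Psi^{2+1/(n-1)}$ coming from $b^{ii}\geq(n-1)\kappa^{1/(n-1)}$ against the at-most-quadratic remainders, is likewise identical to the paper's conclusion that $\pd_tQ<0$ once $Q$ is large.
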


\begin{proof}
Consider the following auxiliary function:
\begin{equation}\label{eq:44}
  Q(x,t) =\frac{-\pd_th}{h-\varepsilon_0},
\end{equation}
where $\varepsilon_0=\frac{1}{2} \inf_{\uS\times[0,T)}h(x,t)$
is a positive constant due to Lemma \ref{lem03}.
We will use the evolution equation of $Q$ to obtain its upper bound.

From \eqref{eq:44}, there holds
\begin{equation}\label{eq:48}
  \pd_tQ
  =
  \frac{(\pd_th)^2}{(h-\varepsilon_0)^2}
  +\frac{-\pd_{tt}^2h}{h-\varepsilon_0}
  =
  Q^2
  +\frac{-\pd_{tt}^2h}{h-\varepsilon_0}.
\end{equation}
Differentiating the flow \eqref{flow1}, we find
\begin{equation*}
  \begin{split}
    \pd_{tt}^2h
    &=
    -\frac{f\pd_t\kappa}{\widetilde{V}_{q-1}}
    +\frac{f\kappa\pd_t\widetilde{V}_{q-1}}{\widetilde{V}_{q-1}^2}
    +\pd_th \\
    &=
    -\frac{f\kappa}{\widetilde{V}_{q-1}}
    \left(
      \frac{\pd_t\kappa}{\kappa}
      -\frac{\pd_t\widetilde{V}_{q-1}}{\widetilde{V}_{q-1}}
    \right)
    +\pd_th \\
    &=
    (\pd_th-h)
    \left(
      \frac{\pd_t\kappa}{\kappa}
      -\frac{\pd_t\widetilde{V}_{q-1}}{\widetilde{V}_{q-1}}
    \right)
    +\pd_th.
  \end{split}
\end{equation*}
Inserting it into \eqref{eq:48}, and recalling \eqref{eq:44}, we obtain
\begin{equation}\label{eq:50}
  \pd_tQ
  =
  Q^2 +Q
  + \left(
    Q+\frac{h}{h-\varepsilon_0}
  \right)
  \left(
    \frac{\pd_t\kappa}{\kappa}
    -\frac{\pd_t\widetilde{V}_{q-1}}{\widetilde{V}_{q-1}}
  \right).
\end{equation}

Note that in order to estimate the upper bound of $Q$, one only needs to concern
$\pd_tQ$ at any point $(x_t,t)$ satisfying
\begin{equation} \label{eq:53}
  Q(x_t,t)=\max_{x\in\uS}Q(x,t)
  \quad
  \text{ and }
  \quad
  Q(x_t,t)\geq C_1,
\end{equation}
where $C_1$ is a positive constant, which is independent of $t$ and to be determined.
At each such point, there are
\begin{equation}\label{eq:46}
  0=Q_i
  =\frac{-\pd_t h_i}{h-\varepsilon_0}
  +\frac{(\pd_th)h_i}{(h-\varepsilon_0)^2}
  =\frac{-\pd_t h_i-Qh_i}{h-\varepsilon_0},
\end{equation}
and
\begin{equation} \label{eq:49}
  0\geq Q_{ij}
  = \frac{-\pd_th_{ij}-Qh_{ij}}{h-\varepsilon_0},
\end{equation}
where the fact $Q_i=0$ has been used when computing \eqref{eq:49}, and $Q_{ij}\leq0$ should be
understood in the sense of negative semi-definite matrix.
These two equations will help us simplify the estimation of $\pd_t\kappa$ and
$\pd_t\widetilde{V}_{q-1}$ in the evolution equation \eqref{eq:50}.

We first estimate $\pd_t\widetilde{V}_{q-1}$.
In fact, its expression has already been given in Lemma \ref{lem002}.
Recalling \eqref{eq:44}, for any unit outer normal $x$, there is
\begin{equation} \label{eq:54}
  \frac{|\pd_th|}{h}(x,t)
  = \frac{(h-\varepsilon_0)Q}{h}(x,t)
  \leq Q(x,t)
  \leq Q(x_t,t).
\end{equation}
Recalling \eqref{eq:46}, we have
\begin{equation*}
  \begin{split}
    (\pd_t\delbar h)(x_t,t)
    &=(\pd_t\nabla h+\pd_t h \,x)(x_t,t) \\
    &=\bigl(
    -Q\nabla h -Q(h-\varepsilon_0)x
    \bigr)(x_t,t) \\
    &=
    -Q(x_t,t)\left(
      \delbar h(x_t,t) -\varepsilon_0 x_t
    \right),
  \end{split}
\end{equation*}
illustrating that for any $y\in\Omega_t$,
\begin{equation} \label{eq:56}
  \begin{split}
    \abs{\frac{\pd_th}{h}(x,t)y-(\pd_t\delbar h)(x_t,t)}
    &\leq
    \frac{|\pd_th|}{h}|y|
    + \abs{(\pd_t\delbar h)(x_t,t)} \\
    &\leq
    C Q(x_t,t) + Q(x_t,t) (C+\varepsilon_0) \\
    &\leq
    3C Q(x_t,t),
  \end{split}
\end{equation}
where $C$ is the same constant in Lemma \ref{lem03}.
Based on \eqref{eq:54} and \eqref{eq:56},  from \eqref{eq:51},
 we find
\begin{equation*}
  |(\pd_t \widetilde{V}_{q-1})(x_t,t)|
  \leq
  (q-1)\, Q(x_t,t)
  \left(
    \int_{\Omega_t}
    \frac{\dd y}{|y-\delbar h|^{n+1-q}}
    +
    \frac{3C|q-1-n|}{n}
    \int_{\Omega_t}
    \frac{\dd y}{|y-\delbar h|^{n+2-q}}
  \right),
\end{equation*}
which together with Lemma \ref{lem601} and Lemma \ref{lem03} to yield
\begin{equation} \label{eq:58}
  \abs{\frac{\pd_t\widetilde{V}_{q-1}}{\widetilde{V}_{q-1}}(x_t,t)}
  \leq C_2 Q(x_t,t),
\end{equation}
where $C_2$ is a positive constant depending only on $n$, $q$, and the constants
$C$  in Lemma \ref{lem03}.

We now estimate $\pd_t\kappa$.
Recall the notation $b_{ij}$ given in \eqref{bij}, and let $b^{ij}$ be its
inverse matrix.
Since $\kappa=1/\det(b_{ij})$, at $(x_t,t)$, we have
\begin{equation}\label{eq:59}
  \begin{split}
    \frac{\pd_t\kappa}{\kappa}
    = -b^{ji}\pd_tb_{ij}
    &= -b^{ji}(\pd_th_{ij}+\pd_th\delta_{ij}) \\
    &\leq Q b^{ji}h_{ij} -\pd_th \TR(b^{ji}) \\
    &= Q[(n-1)-h \TR(b^{ji})]
    + Q(h-\varepsilon_0) \TR(b^{ji}) \\
    &= Q [(n-1) -\varepsilon_0 \TR(b^{ji})],
  \end{split}
\end{equation}
where the inequality follows from \eqref{eq:49}, and the next equality follows from
\eqref{bij} and \eqref{eq:44}.
Observing that
\begin{equation*}
  \frac{\TR(b^{ji})}{n-1}
  \geq \det(b^{ji})^{\frac{1}{n-1}}
  =\kappa^{\frac{1}{n-1}}.
\end{equation*}
By \eqref{eq:59}, we obtain
\begin{equation} \label{eq:60}
  \frac{\pd_t\kappa}{\kappa}
  \leq
  \left(
    (n-1)
    -\varepsilon_0 (n-1) \kappa^{\frac{1}{n-1}}
  \right)Q.
\end{equation}
Recalling the definition of $Q$ in \eqref{eq:44}, and the flow \eqref{flow1},
there holds
\begin{equation*}
  (h-\varepsilon_0)Q +h
  =
  \frac{f}{\widetilde{V}_{q-1}} \kappa,
\end{equation*}
which together with Lemmas \ref{lem03} and \ref{lem04} to imply
\begin{equation}\label{eq:55}
  C_3 Q(x,t)
  \leq
  \kappa(x,t)
  \leq
  C_4 [Q(x,t)+1],
\end{equation}
where $C_3$ and $C_4$ are positive constants depending only on
the upper and lower bounds of $f$,
and the constant $C$ in Lemma \ref{lem03}.
Inserting \eqref{eq:55} into \eqref{eq:60}, we find
\begin{equation} \label{eq:57}
  \frac{\pd_t\kappa}{\kappa}
  \leq
  \left(
  (n-1)
  -\varepsilon_0 (n-1) C_3^{\frac{1}{n-1}} Q^{\frac{1}{n-1}}
  \right)Q.
\end{equation}

Now, by the estimates \eqref{eq:57} and \eqref{eq:58}, the evolution equation
\eqref{eq:50} at point $(x_t,t)$ reads
\begin{equation}\label{eq:62}
  \pd_tQ
  \leq
  Q^2 +Q
  + \left(
    Q+\frac{h}{h-\varepsilon_0}
  \right) \left(
    (C_2+n-1)
    -\varepsilon_0 (n-1) C_3^{\frac{1}{n-1}} Q^{\frac{1}{n-1}}
  \right)Q.
\end{equation}
Selecting $C_1$ in \eqref{eq:53} such that
\begin{equation*}
  \varepsilon_0 (n-1) C_3^{\frac{1}{n-1}} C_1^{\frac{1}{n-1}}
  =2(C_2+n-1),
\end{equation*}
there is
\begin{equation*}
  \begin{split}
    (C_2+n-1)
    -\varepsilon_0 (n-1) C_3^{\frac{1}{n-1}} Q^{\frac{1}{n-1}}
    &\leq
    (C_2+n-1)
    -\varepsilon_0 (n-1) C_3^{\frac{1}{n-1}} C_1^{\frac{1}{n-1}} \\
    &= -(C_2+n-1).
  \end{split}
\end{equation*}
Therefore, \eqref{eq:62} becomes
\begin{equation*}
  \begin{split}
    \pd_tQ
    &\leq
    Q^2 +Q -(C_2+n-1) \left(
      Q+\frac{h}{h-\varepsilon_0}
    \right) Q \\
    &<
    Q^2 +Q -(C_2+n-1) \left(Q+1 \right) Q \\
    &=
    -(C_2+n-2) \left(Q+1 \right) Q.
  \end{split}
\end{equation*}
Thus, we have established that
$(\pd_tQ)(x_t,t)<0$ whenever $Q(x_t,t)\geq C_1$.
Without loss of generality, assume $Q(x_0,0)\leq C_1$.
So
$Q(x_t,t)\leq C_1$ for any $t\in[0,T)$, namely
\begin{equation*}
  Q(x,t) \leq C_1,
  \quad
  \forall (x,t) \in \mathbb{S}^{n-1} \times [0, T).
\end{equation*}
Recalling \eqref{eq:55}, we obtain the conclusion of this lemma.
\end{proof}

Combining with this lemma, one can derive uniform lower and upper bounds for the principal curvatures
$\kappa_{i}(x,t)$ of $\pd\Omega_t$ with $i=1,\cdots, n-1$ as follows.

\begin{lemma}\label{lem06}
 Let $f$ be an even, positive, smooth function on $\sn$, and $\Omega_{t}$ be an origin symmetric, smooth, uniformly convex solution to the flow \eqref{flow}. Then there is a positive constant $C$ independent of $t$ such that the principal curvatures $\kappa_{1}, \kappa_{2}, \ldots, \kappa_{n-1}$ of $\partial\Omega_{t}$ satisfy
  \begin{equation}\label{eq:64}
    1/C \leq \kappa_{i}(x,t) \leq C,
    \quad \forall (x,t)\in\uS\times[0,T).
  \end{equation}
\end{lemma}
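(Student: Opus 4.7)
The plan is to deduce from Lemma \ref{lem010} an upper bound on the largest eigenvalue of the matrix of principal radii of curvature $(b_{ij}) := (h_{ij}+h\,\delta_{ij})$. Combined with $\kappa \leq C$, such a bound yields \eqref{eq:64} at once, for in a frame diagonalizing $(b_{ij})$ one has $\prod_i b_{ii} = 1/\kappa \geq 1/C$ by Lemma \ref{lem010}; any uniform bound $\lambda_{\max}(b_{ij})\le M$ then forces $b_{ii}\ge 1/(CM^{n-2})$ for every $i$, so each principal curvature $\kappa_i = 1/b_{ii}$ lies in $[1/M,\,CM^{n-2}]$.

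To produce such an upper bound I plan to apply the parabolic maximum principle to an auxiliary function of the form
\[
  W(x,t) = \log \lambda_{\max}\bigl(b_{ij}(x,t)\bigr) + \varphi(h(x,t)),
\]
where $\varphi$ is a smooth barrier to be chosen (for instance $\varphi(s) = -A\log(s-\epsilon_0)$ with $\epsilon_0 = \tfrac{1}{2}\inf_{\uS\times[0,T)} h$, positive by Lemma \ref{lem03}, and $A>0$ large). At a spatial maximum point $x_t$ of $W(\cdot,t)$, the standard device of rotating the local orthonormal frame on $\uS$ to diagonalize $(b_{ij})$ and replacing $W$ by $\widetilde W = \log b_{11} + \varphi(h)$ (so that $\widetilde W\le W$ with equality at $x_t$, and hence $x_t$ is also a spatial maximum of $\widetilde W$) reduces the problem to the evolution equation of $\widetilde W$. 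This is obtained by differentiating the flow \eqref{flow1} twice in $e_1$, using the identity $\partial_t b_{ij} = \partial_t h_{ij} + \partial_t h\,\delta_{ij}$, and feeding in the critical-point relations $\nabla\widetilde W(x_t,t)=0$ and $\nabla^2\widetilde W(x_t,t)\le 0$.

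The structural novelty relative to standard local Gauss-curvature flows is the $x$-dependence of $\widetilde V_{q-1}(\Omega_t,\delbar h)$ in the denominator of the flow. Its first and second spatial derivatives are given explicitly by Lemma \ref{lem008} in \eqref{eq:41} and \eqref{eq:36} (applicable since $q-1>2$), and every integral appearing there is of the type handled in Section \ref{sec3}. Combining Lemmas \ref{lem1128} and \ref{lem601} with the $C^0$--$C^1$ bounds of Lemmas \ref{lem03}--\ref{lem05} and the uniform positivity of $\widetilde V_{q-1}$ from Lemma \ref{lem04} shows that all these integrals are bounded uniformly in $t$. Once this reduction is made, the evolution equation of $\widetilde W$ has the familiar structure of the local Gauss-curvature flows treated in Chou-Wang \cite{CW00} and Li-Sheng-Wang \cite{LSW20}: the leading parabolic term is $-\frac{f\kappa}{\widetilde V_{q-1}}\,b^{kl}\nabla^2_{kl}\widetilde W$, the commutator terms are at worst quadratic in $b_{11}$, and the barrier $\varphi(h)$ produces a negative contribution designed to dominate all bad terms for $A$ large.

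The main obstacle is the quadratic-in-$b_{ij}$ term appearing in the second-derivative formula \eqref{eq:36}: after division by $\widetilde V_{q-1}$ it contributes a positive term of order $b_{11}^2$ to the evolution of $\widetilde W$, of the same formal order as the most dangerous local commutator term, and its nonlocal character prevents control by integration by parts or a sign argument. The plan is to verify, using the integral estimates of Section \ref{sec3} together with the $C^0$--$C^1$ bounds already at hand, that the coefficient of this term is uniformly bounded independently of $A$ and of $b_{11}$, so that choosing $A$ sufficiently large in the barrier $\varphi$ makes the negative contribution from the parabolic operator acting on $\varphi(h)$ strictly dominate both the local and the nonlocal bad terms at $(x_t,t)$. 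The maximum principle then forces $\widetilde W(x_t,t)\le C$ uniformly in $t$, giving $\lambda_{\max}(b_{ij})\le C'$, and \eqref{eq:64} follows by the elementary determinant argument of the first paragraph.
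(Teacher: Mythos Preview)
Your overall strategy (maximum principle on an auxiliary function, differentiating the flow twice, invoking Lemma \ref{lem008} and the integral bounds of Section \ref{sec3}) is exactly the paper's approach, and the determinant argument in your first paragraph is correct. But the specific auxiliary function you propose is too weak, and this is a genuine gap rather than a detail to be filled in later. A barrier $\varphi(h)=-A\log(h-\epsilon_0)$, or indeed any barrier depending on $h$ alone, contributes to the evolution inequality only terms of order $O(A)$ and $O\!\left(A(h-\partial_t h)\,\TR b^{-1}\right)$; neither can dominate the nonlocal quadratic piece you correctly isolated, which in $\partial_t\log b_{11}$ enters as $(h-\partial_t h)\,c^{11}b_{11}/\widetilde V_{q-1}=O(b_{11})$ with a coefficient that cannot be made small by choosing $A$ large. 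Your appeal to Chou--Wang \cite{CW00} and Li--Sheng--Wang \cite{LSW20} is misleading here: in those flows the analogue of $\widetilde V_{q-1}(\Omega_t,\delbar h)$ is a \emph{local} function of $(h,\nabla h)$ (e.g.\ $|\delbar h|^{q}$), whose second covariant derivative contains no $b_{ik}b_{jl}$ term, so this obstacle simply does not arise.

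The paper closes the gap by adding a gradient term, taking
\[
\Lambda=\log\TR b-\tilde A\log h+\tilde B\,|\nabla h|^2.
\]
The second covariant derivatives of $|\nabla h|^2$ produce, via the identity $\sum_k b^{ii}h_{ki}^2=\TR b-2(n-1)h+h^2\TR b^{-1}$, a good term $+2\tilde B\,\TR b$ in the final inequality \eqref{trb}, and choosing $\tilde B$ large absorbs the nonlocal $C_1\TR b$. A second structural point you do not mention but which is equally essential: the third-order piece $b_{jki}c^k$ in \eqref{eq:36} and the first-order piece $b_{ik}c^k$ in \eqref{eq:41} carry the \emph{same} integral coefficient $c^k$. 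Because of this, the combination $\frac{\sum_k b_{kk,l}}{\TR b}+2\tilde B\,h_l h_{ll}$ that multiplies $c^l/\widetilde V_{q-1}$ collapses, via the first-order critical condition $\nabla\Lambda=0$, to the bounded quantity $\tilde A h_l/h$ (see \eqref{eq:84}). Without the $|\nabla h|^2$ term in the auxiliary function this cancellation is unavailable and the third-order term would be uncontrolled as well.
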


\begin{proof}
 To establish Lemma \ref{lem06}, based on Lemma \ref{lem010}, we only need to prove the  upper bound of principal radius of curvature.  Consider the auxiliary function,
\begin{equation*}
  \Lambda(x, t)=\log \TR b-\tilde{A}\log h+\tilde{B} |\nabla h|^2,
  \quad
  \forall (x,t)\in\uS\times[0,T),
\end{equation*}
where $\TR b$ is the trace of the eigenvalue of matrix $\{b_{ij}\}$ with $b_{ij}=h_{ij}+h\delta_{ij}$,  $\tilde{A}$ and $\tilde{B}$ are positive
constants to be chosen later. For any fixed $T'\in (0,T)$, assume $\Lambda(x,t)$ attains its maximum at $(x_0,t_0)\in \sn \times [0,T']$. By choosing a suitable orthonormal frame, we may assume $\{b_{ij}\}(x_0, t_0)$ is
diagonal. Then, at the point $(x_0,t_0)$, we have
\begin{equation}\label{eq:68}
  0=\Lambda_i =
  \frac{1}{\TR b}\sum_k b_{kk,i}
  -\frac{\tilde{A}h_i}{h}+2\tilde{B}\sum_k h_k h_{ki},
\end{equation}
and
\begin{multline} \label{eq:69}
  0\geq \Lambda_{ij} =
  \frac{1}{\TR b}\sum_k b_{kk,ij}
  -\frac{1}{(\TR b)^2} \sum_k b_{kk,i} \sum_l b_{ll,j} \\
  -\frac{\tilde{A}h_{ij}}{h}+\frac{\tilde{A}h_i h_j}{h^2}
  +2\tilde{B}\sum_k (h_{ki} h_{kj}+h_kh_{kij}),
\end{multline}
where $\Lambda_{ij}\leq0$ means that it is a negative semi-definite matrix.
Here, $b_{kk, i}$ and $b_{kk, ij}$ represent the first-order and second-order covariant derivatives of $b_{kk}$, respectively.
Without loss of generality, we may assume $t_0>0$.
Then, at $(x_0,t_0)$, we also have
\begin{equation} \label{eq:70}
  0\leq\pd_t\Lambda=
  \frac{1}{\TR b}\sum_k (\pd_t h_{kk} +\pd_th)
  -\frac{\tilde{A}\pd_th}{h} +2\tilde{B}\sum_k h_k\pd_th_k.
\end{equation}
We now make use of \eqref{eq:68}--\eqref{eq:70} and the evolution equation
\eqref{flow1} to derive the uniform upper bound estimate as below.

By \eqref{flow1}, there is
\begin{equation*}
  \log(h-\pd_th)
  =
  \log\kappa
  +\log f
  -\log\widetilde{V}_{q-1}.
\end{equation*}
Denoting $\beta(x,t)= \log f -\log\widetilde{V}_{q-1}$
and differentiating  both sides yields
\begin{equation}\label{eq:72}
  \frac{h_k-\pd_th_k}{h-\pd_th}
  = -b^{ji}b_{ij,k}+\beta_k,
\end{equation}
and
\begin{equation}\label{eq:73}
  \frac{h_{kk}-\pd_th_{kk}}{h-\pd_th}
  = \left(
    \frac{h_k-\pd_th_k}{h-\pd_th}
  \right)^2
  -b^{ji}b_{ij,kk}
  +b^{jr}b^{si}b_{rs,k}b_{ij,k}
  +\beta_{kk},
\end{equation}
where the equality $b^{ji}_{,k}=-b^{jr}b^{si}b_{rs,k}$ has been used.
Recalling \eqref{eq:70}, we carry out the following calculations:
\begin{equation*}
\begin{split}
 & 2\tilde{B}\sum_k \frac{h_k(\pd_th_k-h_k)}{h-\pd_th}
  + \frac{2\tilde{B}|\nabla h|^2}{h-\pd_th}
  +\frac{\tilde{A}}{h}
  -\frac{\tilde{A}}{h-\pd_th}
   \\
& =2\tilde{B}\sum_k \frac{h_k\pd_th_k}{h-\pd_th}
  -\frac{\tilde{A}\pd_th}{h(h-\pd_th)}\\
 & \geq
  \frac{1}{\TR b}\sum_k \frac{-\pd_t h_{kk} -\pd_th}{h-\pd_th} \\
  &=
  \frac{1}{\TR b}\sum_k \frac{h_{kk}-\pd_t h_{kk}}{h-\pd_th}
  + \frac{n-1}{\TR b}
  - \frac{1}{h-\pd_th}.
  \end{split}
\end{equation*}
\eqref{eq:72} and \eqref{eq:73} tell
\begin{equation}\label{eq:74}
  \begin{split}
  2\tilde{B}\sum_k h_k&(b^{ji}b_{ij,k}-\beta_k)
  + \frac{2\tilde{B}|\nabla h|^2}{h-\pd_th}
  +\frac{\tilde{A}}{h}
  -\frac{\tilde{A}-1}{h-\pd_th} \\
  &\geq
  \frac{1}{\TR b} \sum_k \left(
    \left(
      b^{ji}b_{ij,k}-\beta_k
    \right)^2
    -b^{ji}b_{ij,kk}
    +b^{jr}b^{si}b_{rs,k}b_{ij,k}
    +\beta_{kk}
  \right)
  + \frac{n-1}{\TR b} \\
  &\geq
  \frac{1}{\TR b} \sum_k \left(
    -b^{ji}b_{ij,kk}
    +b^{jr}b^{si}b_{rs,k}b_{ij,k}
    +\beta_{kk}
  \right)
  + \frac{n-1}{\TR b}.
\end{split}
\end{equation}
In addition, by the Ricci identity, there is
\begin{equation*}
  b_{kk, ii} =b_{ii, kk}+b_{kk}-b_{ii}.
\end{equation*}
Since
$\{b_{ij}\}(x_0, t_0)$ is diagonal, and by virtue of \eqref{eq:69} and the Ricci identity,
there is $b^{ij}\Lambda_{ij}\leq0$, namely
\begin{equation} \label{eq:75}
  \begin{split}
    \frac{\tilde{A}b^{ii}h_{ii}}{h}-\frac{\tilde{A}b^{ii}h_i^2}{h^2}
    &-2\tilde{B}\sum_k b^{ii}\left(h_{ki}^2+h_kh_{kii}\right) \\
    &\geq
    \frac{1}{\TR b}\sum_{k,i} b^{ii}b_{kk,ii}
    -b^{ii}\left(\frac{\sum_k b_{kk,i}}{\TR b}\right)^2 \\
    &=
    \frac{1}{\TR b} \sum_{k,i} b^{ii}b_{ii, kk}
    -b^{ii}\left(\frac{\sum_k b_{kk,i}}{\TR b}\right)^2
    +\TR b^{-1}
    -\frac{(n-1)^2}{\TR b}.
  \end{split}
\end{equation}
By combining \eqref{eq:74} and \eqref{eq:75}, it yields
\begin{multline}\label{eq:76}
  2\tilde{B}\sum_k h_k(b^{ii}b_{ii,k}-\beta_k)
  + \frac{2\tilde{B}|\nabla h|^2}{h-\pd_th}
  +\frac{\tilde{A}}{h}
  -\frac{\tilde{A}-1}{h-\pd_th} \\
  +\frac{\tilde{A}b^{ii}h_{ii}}{h}-\frac{\tilde{A}b^{ii}h_i^2}{h^2}
  -2\tilde{B}\sum_k b^{ii}\left(h_{ki}^2+h_kh_{kii}\right)
  +\frac{(n-1)(n-2)}{\TR b} \\
  \geq
  \frac{1}{\TR b} \sum_k \left(
    b^{ii}b^{jj}b_{ij,k}^2
    +\beta_{kk}
  \right)
  -b^{ii}\left(\frac{\sum_k b_{kk,i}}{\TR b}\right)^2
  +\TR b^{-1}.
\end{multline}
Note that
\begin{gather*}
  b^{ii}h_{ii}
  =b^{ii}(b_{ii}-h)
  =n-1-h\TR b^{-1}, \\
  \sum_{k}b^{ii}h_{ki}^2
  =b^{ii}h_{ii}^2
  =b^{ii}(b_{ii}^2-2hb_{ii}+h^2)
  =\TR b -2(n-1)h+h^2\TR b^{-1},  \\
  \sum_{k}b^{ii}h_kh_{kii}
  =\sum_{k}b^{ii}h_k(b_{ki,i}-h_i\delta_{ki})
  =\sum_kh_kb^{ii}b_{ii,k} -b^{ii}h_i^2,
\end{gather*}
where the fact that $b_{ij,k}$ is symmetric in all indices is utilized. Then
\eqref{eq:76} turns into
\begin{multline}\label{eq:71}
  -2\tilde{B}\langle \nabla h,\nabla \beta \rangle
  + \frac{2\tilde{B}|\nabla h|^2}{h-\pd_th}
  +\frac{n\tilde{A}}{h}
  -\frac{\tilde{A}-1}{h-\pd_th}
  -\tilde{A}\TR b^{-1} \\
  \hskip1.6em
  +\frac{2\tilde{B}h^2-\tilde{A}}{h^2}b^{ii}h_i^2
  -2\tilde{B}\left(\TR b -2(n-1)h+h^2\TR b^{-1}\right)
  +\frac{(n-1)(n-2)}{\TR b} \\
  \geq
  \frac{1}{\TR b} \sum_k \left(
    b^{ii}b^{jj}b_{ij,k}^2
    +\beta_{kk}
  \right)
  -b^{ii}\left(\frac{\sum_k b_{kk,i}}{\TR b}\right)^2
  +\TR b^{-1}.
\end{multline}
For each $i$, there is
\begin{equation*}
  \begin{split}
    \TR b \sum_{j,k} b^{jj}b_{ij,k}^2
    &\geq
    \biggl(\sum_k b_{kk} \biggr)
    \biggl(\sum_k b^{kk}b_{ik,k}^2 \biggr) \\
    &\geq
    \biggl(\sum_k \sqrt{ b_{kk} } \sqrt{ b^{kk}  b_{ik,k}^2 } \biggr)^2 \\
    &=
    \biggl(\sum_k |b_{kk,i}| \biggr)^2 \\
    &\geq
    \biggl(\sum_k b_{kk,i} \biggr)^2.
  \end{split}
\end{equation*}
Using this inequality, thus \eqref{eq:71} says
\begin{multline*}
  \frac{2\tilde{B}|\nabla h|^2+1-\tilde{A}}{h-\pd_th}
  +\frac{n\tilde{A}}{h}
  +4(n-1)\tilde{B}h
  +\frac{2\tilde{B}h^2-\tilde{A}}{h^2}b^{ii}h_i^2
  +\frac{(n-1)(n-2)}{\TR b} \\
  \geq
  \frac{\Delta \beta}{\TR b}
  +2\tilde{B}\langle \nabla h,\nabla \beta \rangle
  +2\tilde{B}  \TR b
  +(1+\tilde{A}+2\tilde{B}h^2)\TR b^{-1}.
\end{multline*}
If we choose
\begin{equation}\label{AB}
  \tilde{A}=2\tilde{B}C^2+1,
\end{equation}
where $C$ is the constant in Lemma \ref{lem03}, then
we have
\begin{equation}\label{eq:83}
  \frac{\Delta \beta}{\TR b}
  +2\tilde{B}\langle \nabla h,\nabla \beta \rangle
  +2\tilde{B}\TR b
  +2\tilde{B}\TR b^{-1}
  \leq
  nC(\tilde{A}+4\tilde{B})
  +\frac{(n-1)(n-2)}{\TR b}.
\end{equation}

To deal with the terms involving $\widetilde{V}_{q-1}$, we need Lemma \ref{lem008}.
For simplicity, let $c^k(x,t), c(x,t)$ and $c^{kl}(x,t)$ be the
integrals appearing in \eqref{eq:41} and \eqref{eq:36}.
Then Lemma \ref{lem008} tells
\begin{equation}\label{eq:81}
  \begin{gathered}
    \nabla_i \widetilde{V}_{q-1} = b_{ik} c^k, \\
    \nabla_{ij}^2 \widetilde{V}_{q-1}
    = b_{ijk} c^k +b_{ij} c +b_{ik}b_{jl} c^{kl}.
  \end{gathered}
\end{equation}
Here we note that the coefficients of $b_{ik}$ and $b_{ijk}$ are same, which
is crucial for establishing the following estimates.
With the aid of Lemma \ref{lem03}, we know that there exists a positive
constant $C$, independent of $t$, such that
\begin{equation}\label{eq:78}
  \sup (|c^k| +|c|+|c^{kl}|) \leq C.
\end{equation}

Recalling $\beta(x,t)= \log f -\log\widetilde{V}_{q-1}$, we have
\begin{equation}\label{eq:79}
  \beta_k
  =\frac{f_k}{f}-\frac{\nabla_k\widetilde{V}_{q-1}}{\widetilde{V}_{q-1}},
\end{equation}
and
\begin{equation}\label{eq:80}
  \beta_{kk}
  =
  \frac{f_{kk}}{f}
  - \frac{f_k^2}{f^{2}}
  -\frac{\nabla^2_{kk}\widetilde{V}_{q-1}}{\widetilde{V}_{q-1}}
  +\frac{(\nabla_{k}\widetilde{V}_{q-1})^2}{\widetilde{V}_{q-1}^2}.
\end{equation}
Moreover,
\begin{equation*}
  \begin{split}
    \Delta \beta
    &\geq
    -C_f
    -\frac{1}{\widetilde{V}_{q-1}} \left(\sum_k\nabla^2_{kk}\widetilde{V}_{q-1} \right) \\
    &=
    -C_f
    -\frac{1}{\widetilde{V}_{q-1}} \left(
      \sum_k b_{kkl} c^l +c \TR b +\sum_k b_{kk}^2 c^{kk}
    \right) \\
    &\geq
    -C_f
    -\frac{1}{\widetilde{V}_{q-1}} \sum_k b_{kkl} c^l
    -C_1\TR b
    -C_1 (\TR b)^2,
  \end{split}
\end{equation*}
and
\begin{equation*}
  \begin{split}
    2\tilde{B}\langle \nabla h,\nabla \beta \rangle
    &=
    \frac{2\tilde{B}\langle \nabla h,\nabla f \rangle}{f}
    -\frac{2\tilde{B} h_l b_{ll} c^l}{\widetilde{V}_{q-1}} \\
    &\geq
    -C_2\tilde{B}
    -\frac{2\tilde{B} h_l h_{ll} c^l}{\widetilde{V}_{q-1}}. \\
  \end{split}
\end{equation*}
Thus, together with \eqref{eq:68}, we find
\begin{equation}\label{eq:84}
  \begin{split}
    \frac{\Delta \beta}{\TR b}
    +2\tilde{B}\langle \nabla h,\nabla \beta \rangle
    &\geq
    -\frac{C_f}{\TR b} -C_1 -C_1\TR b -C_2\tilde{B}
    -\frac{c^l}{\widetilde{V}_{q-1}} \left(
      \frac{\sum_k b_{kkl}}{\TR b}
      +2\tilde{B} h_l h_{ll}
    \right) \\
    &=
    -\frac{C_f}{\TR b} -C_1 -C_1\TR b -C_2\tilde{B}
    -\frac{\tilde{A}h_lc^l}{h\widetilde{V}_{q-1}} \\
    &\geq
    -\frac{C_f}{\TR b} -C_1 -C_1\TR b -C_2\tilde{B} -C_3\tilde{A}.
  \end{split}
\end{equation}
Employing this inequality, \eqref{eq:83} becomes into
\begin{equation}\label{trb}
  (2\tilde{B}-C_1)\TR b
  +2\tilde{B}\TR b^{-1}
  \leq
  C_4(1+\tilde{A}+\tilde{B})
  +\frac{(n-1)(n-2)+C_f}{\TR b},
\end{equation}
where $C_4$ depends only on $n$, $C$, $C_1$, $C_2$ and $C_3$. Choose a suitable $\tilde{B}$ with satisfying $\tilde{B}>C_{1}$. So \eqref{trb} implies that $\TR b$ is uniformly bounded, it follows that the maximum of radius of principle curvature is uniformly bounded. This completes the proof.
\end{proof}

\section{Smooth solutions to the Chord log-Minkowski problem}
\label{sec7}

This section is devoted to the proof of Theorem \ref{thm4}. The key step is demonstrating the long-time behavior and convergence of solutions to the nonlocal Gaussian curvature flow equation $\eqref{flow1}$. This convergence process differs fundamentally from that of previous local curvature flow equations.

 The uniform a priori estimates for the support function and principal curvature, given in
 Sections \ref{sec4} and \ref{sec6}, imply that $\eqref{flow1}$ is uniformly
 parabolic in the $C^{2}$ norm space. Applying the Krylov-Safonov's Harnack inequality \cite{KS80} to the equation involving $\frac{\partial h}{\partial t}$, we can derive the uniform space-time H\"{o}lder estimates of $\frac{\partial h}{\partial t}$ as
 \begin{equation}\label{aae}
\Big|\Big|\frac{\partial h}{\partial t}\Big|\Big|_{C^{\alpha}_{x,t}(\sn\times [0,+\infty))}\leq C_{0}
\end{equation}
for a positive constant $C_{0}$, independent of $t$, and $0<\alpha<1$. Note that, due to the presence of the nonlocal term $\widetilde{V}_{q-1}$ in \eqref{flow1}, the higher order regularity estimates for the solution to \eqref{flow1} cannot be directly obtained by virtue of the classical regularity theorem of Krylov and Safonov \cite{K87}, which is suitable for local curvature flow problems. Instead, it highly depends on the boundary regularity of $\widetilde{V}_{q-1}$. From Lemma \ref{lemNak}, we know that the differentiability of $\widetilde{V}_{q-1}$ with respect to boundary points does not exceed $C^{\lceil q-1\rceil-1}$, it seems impossible to improve the higher order differentiability of $\widetilde{V}_{q-1}$. This indicates that establishing higher order regularity results for solutions to \eqref{flow1} poses a significant challenge due to the nonlocal term $\widetilde{V}_{q-1}$. From Theorem \ref{thm2}, $\widetilde{V}_{q-1}$ has the same smoothness as the boundary of the convex body for $q>3$. This fact combines with the previous  uniform space-time $C^{2}$ estimate of $h$, i.e.,
  \begin{equation*}\label{bbe}
||h||_{C^{2}_{x,t}(\sn\times [0,+\infty))}\leq C_{1}
\end{equation*}
for some positive constant $C_{1}$, independent of $t$, to yield the uniform space-time $C^{2}$ estimate of $\widetilde{V}_{q-1}$ for $q>3$, which is given as
 \begin{equation}\label{bbe}
\Big|\Big|\widetilde{V}_{q-1}(\Omega_t,\delbar h)\Big|\Big|_{C^{2}_{x,t}(\sn\times [0,+\infty))}\leq C_{2}
\end{equation}
for a positive constant $C_{2}$, independent of $t$.

 Building upon the above results, we now apply the classical Evans-Krylov theorem (see, e.g., ~\cite{GT01,K87}) to the uniform elliptic PDEs
\begin{equation}\label{hrg}
\det (\nabla^{2}h+hI)  ^{\frac{1}{n-1}}=\left( \frac{f}{\left(h-\frac{\partial h}{\partial t}\right)\widetilde{V}_{q-1}(\Omega_t,\delbar h)} \right)^{1/(n-1)}
\end{equation}
 by taking exponent $\frac{1}{n-1}$ to \eqref{flow1}. More precisely, after substituting \eqref{aae} and \eqref{bbe} into the r.h.s of \eqref{hrg}, we conclude that the  r.h.s of \eqref{hrg} has uniformly bounded space-time H\"{o}lder estimates, then by using classical Caffarelli's regularity theory \cite{Ca90}, we can reveal the space-time H\"{o}lder estimates for $\nabla^{2}h$ as
 \[
||h||_{C^{2,\alpha}_{x,t}(\sn\times[0,+\infty))}\leq C_{3}
 \]
for a positive constant $C_{3}$, independent of $t$.

   Estimates for the higher order derivatives of $h$ then follow from the bootstrap argument by utilizing the Schauder estimates. It implies that the long-time
 existence and higher order regularity estimates of the solutions to \eqref{flow1}. Furthermore,  we have
\begin{equation}\label{ESM1}
||h||_{C^{i,j}_{x,t}(\sn\times[0,+\infty))}\leq C
\end{equation}
for each pair of nonnegative integers $i$ and $j$, and for some $C>0$, independent of $t$.

In view of \eqref{ESM1}, by virtue of the Arzel\`a-Ascoli  theorem, we can extract a subsequence of $t$, denoted by $\{t_{k}\}_{k\in \mathbb{N}}\subset (0,+\infty)$, and there exists a smooth function $h(x)$ such that
\begin{equation*}
||h(x,t_{k})-h(x)||_{C^{i}({\sn})}\rightarrow 0
\end{equation*}
uniformly for any nonnegative integer $i$ as $t_{k}\rightarrow +\infty$. This illustrates that $h(x)$ is a support function. Let us denote by $\Omega$ the convex body determined by $h(x)$.  Thus, $\Omega$ is smooth, origin symmetric and strictly convex.

Now, recall that Lemma \ref{lem005} says
\begin{equation}\label{eq*}
\frac{\dd J(t)}{\dd t}\leq 0.
\end{equation}

With the help of  \eqref{ESM1} and the $C^{0}$ estimation demonstrated in Section \ref{sec4}, we conclude that $J(t)$ is a bounded function in $t$ and $\frac{\dd J(t)}{\dd t }$ is uniformly continuous. So, for any $t>0$, using \eqref{eq*}, we find
\begin{equation*}
\int^{t}_{0}\left(-\frac{\dd J(t)}{\dd t}\right)\dd t  =J(0)-J(t)\leq C
\end{equation*}
for a positive constant $C$, independent of $t$. This yields
\begin{equation}\label{JH}
\int^{+\infty}_{0}\left(-\frac{\dd J(t)}{\dd t}\right)\dd t\leq C.
\end{equation}
\eqref{JH} implies that there exists a subsequence of time $t_{k}\rightarrow +\infty$ such that
\begin{equation}\label{infy}
\frac{\dd J(t)}{\dd t}\Bigg|_{t=t_{k}}\rightarrow 0\quad {\rm as} \ t_{k}\rightarrow +\infty.
\end{equation}
  In view of Lemma \ref{lem005}, applying \eqref{infy}, we know that there exists a positive constant $\gamma$ such that
\begin{equation}\label{Exil**}
\gamma h\det(h_{ij}+h\delta_{ij})\widetilde{V}_{q-1}(\Omega_h,\overline{\nabla } h)=f(x), \quad  \forall x \in {\sn}.
\end{equation}
Lemma \ref{lem004}, \eqref{c-i-formula} and \eqref{Exil**} give $\gamma=1$. Then, we conclude that $h(x)$ satisfies
\[ h\det(h_{ij}+h\delta_{ij})\widetilde{V}_{q-1}(\Omega_h,\overline{\nabla } h)=f(x).
\]
Hence the proof of Theorem \ref{thm4} is complete.

{\bf Conflict of interest:} The authors declare that they have no conflict of interest.

{\bf Data availability:} Data sharing was not applicable to this article and no datasets were generated or analyzed during the current study.

\end{document}